\newcommand{\negphantom}[1]{\settowidth{\dimen0}{#1}\hspace*{-\dimen0}}
\newcommand{\mybox}[1]{\raisebox{0pt}[2.3ex][0ex]{$#1$}}
\newcommand\Tspace[1]{\rule{0pt}{#1 ex}}  
\newcommand\Bspace[1]{\rule[-#1 ex]{0pt}{#1ex}}
\newcommand{\blue}[1]{\bgroup\color{blue}{#1}\egroup}
\newcommand{\red}[1]{\bgroup\color{red}{#1}\egroup}
\newcommand{\green}[1]{\bgroup\color{OliveGreen}{#1}\egroup}
\newcommand{\plum}[1]{\bgroup\color{Plum}{#1}\egroup}
\newcommand{\cel}[1]{\bgroup\color{RoyalBlue}{#1}\egroup}
\newtheorem{theorem}{Theorem}[section]
\newtheorem{lemma}[theorem]{Lemma}
\newtheorem{definition}[theorem]{Definition}
\theoremstyle{remark} 
\newtheorem{remark}[theorem]{Remark}
\numberwithin{equation}{section}
\newcommand\size{0.65}
\newcommand\sizeC{0.75}
\newcommand{\al}{\alpha}
\newcommand{\de}{\delta}
\renewcommand\epsilon{\varepsilon}
\newcommand{\ga}{\gamma}
\newcommand{\ka}{\kappa}
\newcommand{\vka}{\varkappa}
\newcommand{\la}{\lambda}
\newcommand{\om}{\omega}
\newcommand{\si}{\sigma}
\newcommand{\te}{\theta}
\newcommand{\vp}{\varphi}
\newcommand{\De}{\Delta}
\newcommand{\La}{\Lambda}
\newcommand{\Om}{\Omega}
\newcommand{\hnu}{\hat\nu}
\newcommand{\cA}{{\mathcal A}}
\newcommand{\cE}{{\mathcal E}}
\newcommand{\cK}{{\mathcal{K}}}
\newcommand{\cO}{{\mathcal O}}
\newcommand{\cR}{{\mathcal R}}
\newcommand{\cS}{{\mathcal S}}
\newcommand{\cX}{{\mathcal X}}
\newcommand{\cW}{{\mathcal W}}
\newcommand{\RR}{{\mathbb R}}
\newcommand{\CC}{{\mathbb C}}
\newcommand{\TT}{{\mathbb T}}
\newcommand{\ZZ}{{\mathbb Z}}
\newcommand\fracC{\mathfrak{C}}
\newcommand\fracCDe{\mathfrak{C}_\De}
\newcommand\fracCDeK{\mathfrak{C}_\DeK}
\newcommand\fracCDteK{\mathfrak{C}_{\scalebox{\size}{$\De\DteK$}}}
\newcommand\fracCDvpK{\mathfrak{C}_{\scalebox{\size}{$\De\DvpK$}}}
\newcommand\fracCDteKT{\mathfrak{C}_{\scalebox{\size}{$\De(\DteK)^\ttop$}}}
\newcommand\fracCDvpKT{\mathfrak{C}_{\scalebox{\size}{$\De(\DvpK)^\ttop$}}}
\newcommand\fracCW{\mathfrak{C}_{\scalebox{\size}{$\DeW$}}}
\newcommand\fracCWT{\mathfrak{C}_{\scalebox{\size}{$\De\WT$}}}
\newcommand\fracCtU{\mathfrak{C}_{\scalebox{\size}{$\tU$}}}
\newcommand\fracCNO{\mathfrak{C}_{\scalebox{\size}{$\De\NO$}}}
\newcommand\fracCNOT{\mathfrak{C}_{\scalebox{\size}{$\De\NOT$}}}
\newcommand\fracCDeB{\mathfrak{C}_{\DeB}}
\newcommand\fracCDeinvaverS{\mathfrak{C}_{\scalebox{\size}{$\De\aver{\S}^{\!-1}$}}}
\newcommand\fracCDela{\mathfrak{C}_{\Dela}}
\newcommand\fracCDeinvla{\mathfrak{C}_{\Deinvla}}
\newcommand\bi{\boldsymbol{i}}
\newcommand\bJ{\boldsymbol{J}}
\newcommand\bg{\boldsymbol{g}}
\newcommand{\abs}[1]{|{#1}|}
\newcommand{\Abs}[1]{\left|{#1}\right|}
\newcommand{\norm}[1]{\|{#1}\|}
\newcommand{\Norm}[1]{\left\|{#1}\right\|}
\newcommand{\aver}[1]{{\langle{#1}\rangle}}
\newcommand{\Max}[1]{{\max{\left\{#1\right\}}}}
\newcommand\hatal{\hat\al}
\newcommand\hatom{\hat\om}
\newcommand{\tphi}{\tilde\phi}
\newcommand{\phiK}{\phi_T\comp(\K,\id)}
\newcommand{\phiT}{\phi_T}
\newcommand{\DzphiT}{{\Dif_z}\phi_T}
\newcommand{\DzphiTT}{{(\Dif_z}\phi_T)^\ttop}
\newcommand{\DDzphiT}{{\Dif^2_z}\phi_T}
\newcommand{\DzphiK}{\Dif_z\phi_T\comp(\K,\id)}
\newcommand{\DzphiKT}{\big(\Dif_z\phi_T\comp(\K,\id)\big)^\ttop}
\newcommand{\DeDephiT}{{\De^2\phi_T}}
\newcommand{\DeDzphiT}{{\De\DzphiT}}
\newcommand\angles{{\te,\vp}}
\newcommand\barangles{{\bar\te,\bar\vp}}
\newcommand\invla{\la^{\!-1}}
\newcommand\id{\mathrm{id}}
\newcommand\dif{{\rm d}}
\newcommand\Dif{{\rm D}}
\newcommand\pd{ \partial}
\newcommand{\Lie}[1]{\mathfrak{L}_{#1}}
\newcommand{\R}[1]{\mathfrak{R}_{#1}}
\newcommand{\CR}{c_{\mathfrak{R}}}
\newcommand\Lop{{\mathfrak{L}}}
\newcommand\Rop{{\mathfrak{R}}}
\newcommand\Romal{{\cR}_{\om\al}\mspace{1.5mu}}
\newcommand\Ral{{\cR}_\al\mspace{1.5mu}}
\newcommand{\comp}{{\!\:\circ\!\:}}
\newcommand\ttop{{\!\top\!}}
\newcommand\K{K}
\newcommand\KO{{\K_0}}
\newcommand\W{{W}}
\newcommand\WT{{W^\ttop}}
\newcommand\DK{{\Dif\! \K}}
\newcommand\DteK{\Dif_\te\K}
\newcommand\DvpK{\Dif_\vp\K}
\newcommand\DteKT{({\Dif_\te\K})^{\ttop}}
\newcommand\DvpKT{({\Dif_\vp \K})^{\ttop}}
\newcommand\cXT{{\cX^\ttop}}
\renewcommand\L{{L}}
\newcommand\LT{{\L}^\ttop}
\newcommand\N{{N}}
\newcommand\NT{{\N}^\ttop}
\newcommand\NO{\hat N}
\newcommand\NOT{\NO^\ttop}
\newcommand\B{{B}}
\newcommand\BT{{B^\ttop}}
\newcommand\hS{\hat S}
\newcommand\tS{S}
\newcommand\hSone{\hat S_1}
\renewcommand\S{\cS}
\newcommand\A{{A}}
\newcommand\AT{{A^\ttop}}
\newcommand\Aone{A_1}
\newcommand\Atwo{A_2}
\newcommand\Athree{A_3}
\newcommand\Afour{A_4}
\renewcommand\P{{P}}
\newcommand\PT{{P^\ttop}}
\newcommand\hP{{\hat \P}}
\newcommand\hPT{{\hP^\ttop}}
\newcommand\LaT{{\La^\ttop}}
\newcommand\hLa{{\La}_{\scalebox{\size}{$\hP$}}}
\newcommand\tLa{{\La_{\scalebox{\size}{$\P$}}}}
\newcommand\G{G}
\newcommand\DG{{\rm D}\G}
\renewcommand\DJ{{\rm D}\J}
\newcommand\DJT{{(\rm D}\J)^\ttop}
\newcommand\GK{\G\comp\K}
\newcommand\GL{G_\L}
\newcommand\barK{\bar\K}
\newcommand\DtebarK{\Dif_\te\barK}
\newcommand\DtebarKT{(\Dif_\te\barK)^\ttop}
\newcommand\DvpbarK{\Dif_\vp\barK}
\newcommand\DvpbarKT{(\Dif_\vp\barK)^\ttop}
\newcommand\barW{\bar\W}
\newcommand\barWT{\bar\W^\ttop}
\newcommand\barL{\bar\L}
\newcommand\GbarK{\G\comp\barK}
\newcommand\GbarL{G_{\bar\L}}
\newcommand\barNO{\hat{\bar{N}}}
\newcommand\barNOT{\hat{\bar{N}}^\ttop}
\newcommand\barB{\bar \B}
\newcommand\barhS{\hat{\bar{S}}}
\newcommand\barla{\bar\la}
\newcommand\barinvla{{\barla}^{-1}}
\newcommand\DeK{{\De \K}}
\newcommand\Dela{{\De \la}}
\newcommand\Deinvla{{\De \invla}}
\newcommand\DeW{{\De\W}}
\newcommand\DeOm{{\De\Om\hspace{0.2ex}}}
\newcommand\DeL{\De\L}
\newcommand\DeLT{\De\L^\ttop}
\newcommand\DeG{\De \G}
\newcommand\DeJ{\De \J}
\newcommand\DeGL{\De \GL}
\newcommand\DeB{\De \B}
\newcommand\DeNO{\De\NO}
\newcommand\DeNOT{\De\NOT}
\newcommand\DehS{\De\hS}
\newcommand\etaK{\eta_{\scalebox{\size}{$\K$}}}
\newcommand\etaaK{\eta_{\scalebox{\size}{$\K$}}^1}
\newcommand\etabK{\eta_{\scalebox{\size}{$\K$}}^2}
\newcommand\etacK{\eta_{\scalebox{\size}{$\K$}}^3}
\newcommand\etadK{\eta_{\scalebox{\size}{$\K$}}^4}
\newcommand\etacaK{\eta_{\scalebox{\size}{$\K$}}^{31}}
\newcommand\etacbK{\eta_{\scalebox{\size}{$\K$}}^{32}}
\newcommand\xiK{\xi_{\scalebox{\size}{$\K$}}}
\newcommand\xiaK{\xi_{\scalebox{\size}{$\K$}}^1}
\newcommand\xibK{\xi_{\scalebox{\size}{$\K$}}^2}
\newcommand\xicK{\xi_{\scalebox{\size}{$\K$}}^3}
\newcommand\xidK{\xi_{\scalebox{\size}{$\K$}}^4}
\newcommand\etaW{\eta_{\scalebox{\size}{$\W$}}}
\newcommand\etaaW{\eta_{\scalebox{\size}{$\W$}}^1}
\newcommand\etabW{\eta_{\scalebox{\size}{$\W$}}^2}
\newcommand\etacW{\eta_{\scalebox{\size}{$\W$}}^3}
\newcommand\etadW{\eta_{\scalebox{\size}{$\W$}}^4}
\newcommand\xiW{\xi_{\scalebox{\size}{$\W$}}}
\newcommand\xiaW{\xi_{\scalebox{\size}{$\W$}}^1}
\newcommand\xibW{\xi_{\scalebox{\size}{$\W$}}^2}
\newcommand\xicW{\xi_{\scalebox{\size}{$\W$}}^3}
\newcommand\xidW{\xi_{\scalebox{\size}{$\W$}}^4}
\newcommand\bOm{\boldsymbol{\om}}
\newcommand\bal{\boldsymbol{\al}}
\newcommand\J{{J}}
\newcommand\OmO{{\Omega_{0}}}
\newcommand\OmL{{\Omega_{L}}}
\newcommand\OmDK{{\Omega_{\scalebox{\size}{$\DK$}}}}
\newcommand\OmDKcX{{\Omega_{\scalebox{\size}{$\DK\!\cX$}}}}
\newcommand\OmDKW{{\Omega_{\scalebox{\size}{$\DK\!\W$}}}}
\newcommand\OmcXDK{{\Omega_{\scalebox{\size}{$\cX\DK$}}}}
\newcommand\OmcX{{\Omega_{\scalebox{\size}{$\cX$}}}}
\newcommand\OmcXW{{\Omega_{\scalebox{\size}{$\cX\W$}}}}
\newcommand\OmWDK{{\Omega_{\scalebox{\size}{$\W\DK$}}}}
\newcommand\OmWcX{{\Omega_{\scalebox{\size}{$\W\!\cX$}}}}
\newcommand\OmW{{\Omega_{\scalebox{\size}{$\W$}}}}
\newcommand\OmK{{\Omega\comp\K}\mspace{1.5mu}}
\newcommand\OmphiK{{\Omega\comp\phiK}\mspace{1.5mu}}
\newcommand\OmbarK{{\Omega\comp\K\comp\Romal}\mspace{1.5mu}}
\newcommand\Da{{\Dif a}}
\newcommand\DDa{{\Dif^2\! a}}
\newcommand\DaT{{(\Dif a)^{\!\top}}}
\newcommand\DOm{{\Dif \Om}}
\newcommand\U{{\mathcal{U}}}
\newcommand\tU{{\tilde\U}}
\newcommand\E{\cE}
\newcommand\Esym{E_{\scalebox{\size}{\rm symp$\P$}}} 
\newcommand\Ered{E_{\scalebox{\size}{\rm red$\P$}}} 
\newcommand\ElinK{E^\K_{\scalebox{\size}{\rm lin}}} 
\newcommand\ElinW{E^\W_{\scalebox{\size}{\rm lin}}} 
\newcommand\EL{E_\L}
\newcommand\EK{{E_{\scalebox{\size}{$\K$}}}}
\newcommand\EKnew{{E_{\scalebox{\size}{$\barK$}}}}
\newcommand\EW{{E_{\scalebox{\size}{$\W$}}}}
\newcommand\EWnew{{E_{\scalebox{\size}{$\barW$}}}}
\newcommand\tildeEW{{\widetilde{E_{\scalebox{\size}{$\W$}}}}}
\newcommand\EcX{E_\cX}
\newcommand\EinvhP{E_{\scalebox{\size}{{\rm inv}$\scriptstyle\hP$}}}
\newcommand\EsymhLa{E_{\scalebox{\size}{{\rm symp}$\hLa$}}}
\newcommand\EsymA{E_{\scalebox{\size}{{\rm sym}$\A$}}}
\newcommand\EsyminvLahS{{E_{\scalebox{\size}{{\rm
sym}$\La^{\!-1}\!\hS$}}}}
\newcommand\hEsym{E_{\scalebox{\size}{\rm symp$\hP$}}}
\newcommand\hEred{E_{\scalebox{\size}{\rm red$\hP$}}}
\newcommand\cteOm{c_{\scalebox{\size}{$\Om$}}}
\newcommand\cteDOm{c_{\scalebox{\size}{$\DOm$}}}
\newcommand\cteDa{c_{\scalebox{\size}{$\Da$}}}
\newcommand\cteDDa{c_{\scalebox{\size}{$\DDa$}}}
\newcommand\cteDaT{c_{\scalebox{\size}{$(\Da)^\ttop$}}}
\newcommand\cteJ{c_{\scalebox{\size}{$\J$}}}
\newcommand\cteJT{c_{\scalebox{\size}{$\J^\ttop$}}}
\newcommand\cteDzphiT{c_{\scalebox{\size}{{$\DzphiT$}}}}
\newcommand\cteDzphiTT{c_{\scalebox{\size}{{$\DzphiTT$}}}}
\newcommand\cteDDzphiT{c_{\scalebox{\size}{{$\DDzphiT$}}}}
\newcommand\cteX{c_{\scalebox{\size}{$X$}}}
\newcommand\cteXT{c_{\scalebox{\size}{$J^\ttop$}}}
\newcommand\cteDzX{c_{\scalebox{\size}{$\Dif_z X$}}}
\newcommand\cteDzXT{c_{\scalebox{\size}{$(\Dif_z X)^\ttop$}}}
\newcommand\cteDDzH{{c_{\scalebox{\size}{$\Dif^2_z H$}}}}
\newcommand\cteG{{c_{\scalebox{\size}{$\G$}}}}
\newcommand\cteDG{c_{\scalebox{\size}{$\DG$}}}
\newcommand\cteDJ{c_{\scalebox{\size}{$\DJ$}}}
\newcommand\cteDJT{c_{\scalebox{\size}{$\DJT$}}}
\newcommand\sigmaDteK{\sigma_{\scalebox{\size}{{${\rm D}_\te K$}}}}
\newcommand\sigmaDteKT{\sigma_{\scalebox{\size}{{$\left({\rm D}_\te K\right)^\ttop$}}}}
\newcommand\sigmaDvpK{\sigma_{\scalebox{\size}{{${\rm D}_\vp K$}}}}
\newcommand\sigmaDvpKT{\sigma_{\scalebox{\size}{{$\left({\rm D}_\vp K\right)^\ttop$}}}}
\newcommand\sigmaW{\sigma_{\scalebox{\size}{{$\W$}}}}
\newcommand\sigmaWT{\sigma_{\scalebox{\size}{{$\W^\ttop$}}}}
\newcommand\sigmala{\sigma_{\scalebox{\size}{{$\la$}}}}
\newcommand\sigmainvla{\sigma_{\scalebox{\size}{{$\la^{\scalebox{0.7}{$-1$}}$}}}}
\newcommand\sigmainvaverS{\sigma_{\scalebox{\size}{$\aver{\S}^{\!-1}$}}}
\newcommand\sigmaNO{\sigma_{\scalebox{\size}{$\NO$}}}
\newcommand\sigmaNOT{\sigma_{\scalebox{\size}{$\NOT$}}}
\newcommand\sigmaB{{\sigma_{\scalebox{\size}{$B$}}}}
\newcommand\Cla{C_\la}
\newcommand\CcX{{C_{\cX}}}
\newcommand\CcXT{{C_{\cXT}}}
\newcommand\CL{{C_{\L}}}
\newcommand\CLT{{C_{\LT}}}
\newcommand\CP{C_\P}
\newcommand\CPT{C_{\P^\ttop}}
\newcommand\ChP{C_\hP}
\newcommand\ChPT{C_{\hP^\ttop}}
\newcommand\ChS{{C_{\hS}}}
\newcommand\ChST{{C_{\hS^\ttop}}}
\newcommand\CinvLahSK{{C_{\scalebox{\sizeC}{$\La^{\!-1}\!\hS$}}^\K}}
\newcommand\CinvLahSW{{C_{\scalebox{\sizeC}{$\La^{\!-1}\!\hS$}}^\W}}
\newcommand\CA{{C_{\A}}}
\newcommand\CAT{{C_{\AT}}}
\newcommand\CEsymAK{{C_{\scalebox{\sizeC}{$\EsymA$}}^\K}}
\newcommand\CEsymAW{{C_{\scalebox{\sizeC}{$\EsymA$}}^\W}}
\newcommand\CN{{C_{\N}}}
\newcommand\CNT{{C_{\NT}}}
\newcommand\CELK{{C_{\scalebox{\sizeC}{$\EL$}}^\K}}
\newcommand\CELW{{C_{\scalebox{\sizeC}{$\EL$}}^\W}}
\newcommand\CELTK{{C_{\scalebox{\sizeC}{$\EL^\ttop$}}^\K}}
\newcommand\CELTW{{C_{\scalebox{\sizeC}{$\EL^\ttop$}}^\W}}
\newcommand\CEsym{{C_{\scalebox{\sizeC}{$\Esym$}}}}
\newcommand\CEsymK{{C_{\scalebox{\sizeC}{$\Esym$}}^{\K}}}
\newcommand\CEsymW{{C_{\scalebox{\sizeC}{$\Esym$}}^{\W}}}
\newcommand\ChEsym{{C_{\scalebox{\sizeC}{$\hEsym$}}}}
\newcommand\ChEsymK{{C_{\scalebox{\sizeC}{$\hEsym$}}^\K}}
\newcommand\ChEsymW{{C_{\scalebox{\sizeC}{$\hEsym$}}^{\W}}}
\newcommand\CEredK{{C_{\scalebox{\sizeC}{$\Ered$}}^{\K}}}
\newcommand\CEredW{{C_{\scalebox{\sizeC}{$\Ered$}}^{\W}}}
\newcommand\ChEredK{{C_{\scalebox{\sizeC}{$\hEred$}}^{\K}}}
\newcommand\ChEredW{{C_{\scalebox{\sizeC}{$\hEred$}}^{\W}}}
\newcommand\CElinKK{{C_{\scalebox{\sizeC}{$\ElinK$}}^{\K}}}
\newcommand\CElinKKW{{C_{\scalebox{\sizeC}{$\ElinK$}}^{\K\W}}}
\newcommand\CElinWKK{{C_{\scalebox{\sizeC}{$\ElinW$}}^{\K\K}}}
\newcommand\CElinWWW{{C_{\scalebox{\sizeC}{$\ElinW$}}^{\W\W}}}
\newcommand\CElinWKW{{C_{\scalebox{\sizeC}{$\ElinW$}}^{\K\W}}}
\newcommand\CEredbbK{{C_{\scalebox{\sizeC}{$\Ered^{22}$}}^{\K}}}
\newcommand\CEredbbW{{C_{\scalebox{\sizeC}{$\Ered^{22}$}}^{\W}}}
\newcommand\ChEredaaK{{C_{\scalebox{\sizeC}{$\hEred^{11}$}}^{\K}}}
\newcommand\ChEredbbK{{C_{\scalebox{\sizeC}{$\hEred^{22}$}}^{\K}}}
\newcommand\ChEredaaW{{C_{\scalebox{\sizeC}{$\hEred^{11}$}}^{\W}}}
\newcommand\ChEredbbW{{C_{\scalebox{\sizeC}{$\hEred^{22}$}}^{\W}}}
\newcommand\ChEredbaK{{C_{\scalebox{\sizeC}{$\hEred^{21}$}}^{\K}}}
\newcommand\ChEredbaW{{C_{\scalebox{\sizeC}{$\hEred^{21}$}}^{\W}}}
\newcommand\CtildeEK{{C_{{\scalebox{\sizeC}{$\tildeEW$}}}^{\K}}}
\newcommand\CtildeEW{{C_{{\scalebox{\sizeC}{$\tildeEW$}}}^{\W}}}
\newcommand\CEK{{C_{\scalebox{\sizeC}{$E_{\bar\K}$}}}}
\newcommand\CEKKK{{C_{\scalebox{\sizeC}{$E_{\bar\K}$}}^{{\K\K}}}}
\newcommand\CEKKW{{C_{\scalebox{\sizeC}{$E_{\bar\K}$}}^{{\K\W}}}}
\newcommand\CEW{{C_{\scalebox{\sizeC}{$E_{\bar\W}$}}}}
\newcommand\CEWKK{{C_{\scalebox{\sizeC}{$E_{\bar\W}$}}^{\K\K}}}
\newcommand\CEWWW{{C_{\scalebox{\sizeC}{$E_{\bar\W}$}}^{\W\W}}}
\newcommand\CEWKW{{C_{\scalebox{\sizeC}{$E_{\bar\W}$}}^{\K\W}}}
\newcommand\COmDK{C_{\scalebox{\sizeC}{$\OmDK$}}}
\newcommand\CLieOmDK{{C_{\scalebox{\sizeC}{$\Lop \OmDK$}}}}
\newcommand\COmLK{{C_{\scalebox{\sizeC}{$\OmL$}}^{\K}}}
\newcommand\COmLW{{C_{\scalebox{\sizeC}{$\OmL$}}^{\W}}}
\newcommand\CEcX{C_{\scalebox{\sizeC}{$\EcX$}}}
\newcommand\CEcXT{C_{\scalebox{\sizeC}{$\EcX^\ttop$}}}
\newcommand\CLieaone{{C_{\scalebox{\sizeC}{$\Lop\OmDKcX$}}}}
\newcommand\CLieatwoK{{C^\K_{\scalebox{\sizeC}{$\Lop^{1\la} \OmDKW$}}}}
\newcommand\CLieatwoW{{C^\W_{\scalebox{\sizeC}{$\Lop^{1\la} \OmDKW$}}}}
\newcommand\CLieathree{{C_{\scalebox{\sizeC}{$\Lop \OmcXDK$}}}}
\newcommand\CLieafiveK{{C^\K_{\scalebox{\sizeC}{$\Lop^{1\la} \OmcXW$}}}}
\newcommand\CLieafiveW{{C^\W_{\scalebox{\sizeC}{$\Lop^{1\la} \OmcXW$}}}}
\newcommand\CLieasixK{{C^\K_{\scalebox{\sizeC}{$\Lop^{1\la} \OmWDK$}}}}
\newcommand\CLieasixW{{C^\W_{\scalebox{\sizeC}{$\Lop^{1\la} \OmWDK$}}}}
\newcommand\CLieasevenK{{C^\K_{\scalebox{\sizeC}{$\Lop^{1\la} \OmWcX$}}}}
\newcommand\CLieasevenW{{C^\W_{\scalebox{\sizeC}{$\Lop^{1\la} \OmWcX$}}}}
\newcommand\Caone{{C_{\scalebox{\sizeC}{$\OmDKcX$}}}}
\newcommand\CatwoK{{C_{\scalebox{\sizeC}{$\OmDKW$}}^\K}}
\newcommand\CatwoW{{C_{\scalebox{\sizeC}{$\OmDKW$}}^\W}}
\newcommand\Cathree{{C_{\scalebox{\sizeC}{$\OmcXDK$}}}}
\newcommand\CafiveK{{C_{\scalebox{\sizeC}{$\OmcXW$}}^{\K}}}
\newcommand\CafiveW{{C_{\scalebox{\sizeC}{$\OmcXW$}}^{\W}}}
\newcommand\CasixK{{C_{\scalebox{\sizeC}{$\OmWDK$}}^\K}}
\newcommand\CasixW{{C_{\scalebox{\sizeC}{$\OmWDK$}}^\W}}
\newcommand\CasevenK{{C_{\scalebox{\sizeC}{$\OmWcX$}}^\K}}
\newcommand\CasevenW{{C_{\scalebox{\sizeC}{$\OmWcX$}}^\W}}
\newcommand\CDe{{C_\De}}
\newcommand\CE{{C_\E}}
\newcommand\CEj{{C_{\E_j}}}
\newcommand\CEjmo{{C_{\E_{j-1}}}}
\newcommand\CEinvhPK{{C_{\scalebox{\sizeC}{$\EinvhP$}}^\K}}
\newcommand\CEinvhPW{{C_{\scalebox{\sizeC}{$\EinvhP$}}^\W}}
\newcommand\CetaoneK{{C_{\scalebox{\sizeC}{$\eta_\K^1$}}}}
\newcommand\CetatwoK{{C_{\scalebox{\sizeC}{$\eta_\K^2$}}}}
\newcommand\CetathreeK{{C_{\scalebox{\sizeC}{$\eta_\K^3$}}}}
\newcommand\CetafourK{{C_{\scalebox{\sizeC}{$\eta_\K^4$}}}}
\newcommand\CaveretathreeK{{C_{\scalebox{\sizeC}{$\aver{\etaK^{3}}$}}}}
\newcommand\CaveretathreeoneK{{C_{\scalebox{\sizeC}{$\aver{\etaK^{31}}$}}}}
\newcommand\CaveretathreetwoK{{C_{\scalebox{\sizeC}{$\aver{\etaK^{32}}$}}}}
\newcommand\CetaoneWK{{C^\K_{\scalebox{\sizeC}{$\eta_\W^1$}}}}
\newcommand\CetatwoWK{{C^\K_{\scalebox{\sizeC}{$\eta_\W^2$}}}}
\newcommand\CetathreeWK{{C^\K_{\scalebox{\sizeC}{$\eta_\W^3$}}}}
\newcommand\CetafourWK{{C^\K_{\scalebox{\sizeC}{$\eta_\W^4$}}}}
\newcommand\CetaoneWW{{C^\W_{\scalebox{\sizeC}{$\eta_\W^1$}}}}
\newcommand\CetatwoWW{{C^\W_{\scalebox{\sizeC}{$\eta_\W^2$}}}}
\newcommand\CetathreeWW{{C^\W_{\scalebox{\sizeC}{$\eta_\W^3$}}}}
\newcommand\CetafourWW{{C^\W_{\scalebox{\sizeC}{$\eta_\W^4$}}}}
\newcommand\CaverxithreeK{{C_{\scalebox{\sizeC}{$\aver{\xi_\K^3}$}}}}
\newcommand\CxiK{{C_{\scalebox{\sizeC}{$\xi_\K$}}}}
\newcommand\CxioneK{{C_{\scalebox{\sizeC}{$\xi_\K^1$}}}}
\newcommand\CxitwoK{{C_{\scalebox{\sizeC}{$\xi_\K^2$}}}}
\newcommand\CxithreeK{{C_{\scalebox{\sizeC}{$\xi_\K^3$}}}}
\newcommand\CxifourK{{C_{\scalebox{\sizeC}{$\xi_\K^4$}}}}
\newcommand\CxiWK{{C^\K_{\scalebox{\sizeC}{$\xiW$}}}}
\newcommand\CxioneWK{{C^\K_{\scalebox{\sizeC}{$\xi_W^1$}}}}
\newcommand\CxitwoWK{{C^\K_{\scalebox{\sizeC}{$\xi_W^2$}}}}
\newcommand\CxithreeWK{{C^\K_{\scalebox{\sizeC}{$\xi_W^3$}}}}
\newcommand\CxifourWK{{C^\K_{\scalebox{\sizeC}{$\xi_W^4$}}}}
\newcommand\CxiWW{{C^W_{\scalebox{\sizeC}{$\xiW$}}}}
\newcommand\CxioneWW{{C^W_{\scalebox{\sizeC}{$\xi_W^1$}}}}
\newcommand\CxitwoWW{{C^W_{\scalebox{\sizeC}{$\xi_W^2$}}}}
\newcommand\CxithreeWW{{C^W_{\scalebox{\sizeC}{$\xi_W^3$}}}}
\newcommand\CxifourWW{{C^W_{\scalebox{\sizeC}{$\xi_W^4$}}}}
\newcommand\CDeK{C_{\scalebox{\sizeC}{$\DeK$}}}
\newcommand\CDeW{C_{\scalebox{\sizeC}{$\DeW$}}}
\newcommand\CDeWK{C^\K_{\scalebox{\sizeC}{$\DeW$}}}
\newcommand\CDeWW{C^\W_{\scalebox{\sizeC}{$\DeW$}}}
\newcommand\CDeDzphiTKK{{C^{\K\K}_{\scalebox{\sizeC}{$\DeDzphiT[\DeW]$}}}}
\newcommand\CDeDzphiTKW{{C^{\K\W}_{\scalebox{\sizeC}{$\DeDzphiT[\DeW]$}}}}
\newcommand\CDeWDelaKK{C^{\K\K}_{\scalebox{\sizeC}{$\DeW\Dela$}}}
\newcommand\CDeWDelaWW{C^{\W\W}_{\scalebox{\sizeC}{$\DeW\Dela$}}}
\newcommand\CDeWDelaKW{C^{\K\W}_{\scalebox{\sizeC}{$\DeW\Dela$}}}
\newcommand\CDeL{C_{\scalebox{\sizeC}{$\DeL$}}}
\newcommand\CDeLK{C^{\K}_{\scalebox{\sizeC}{$\DeL$}}}
\newcommand\CDeLW{C^{\W}_{\scalebox{\sizeC}{$\DeL$}}}
\newcommand\CDeLT{C_{\scalebox{\sizeC}{$\DeL^\ttop$}}}
\newcommand\CDeLTK{C^{\K}_{\scalebox{\sizeC}{$\DeL^\ttop$}}}
\newcommand\CDeLTW{C^{\W}_{\scalebox{\sizeC}{$\DeL^\ttop$}}}
\newcommand\CDeGL{C_{\scalebox{\sizeC}{$\DeGL$}}}
\newcommand\CDeGLK{C^{\K}_{\scalebox{\sizeC}{$\DeGL$}}}
\newcommand\CDeGLW{C^{\W}_{\scalebox{\sizeC}{$\DeGL$}}}
\newcommand\CDeB{C_{\scalebox{\sizeC}{$\De \B$}}}
\newcommand\CDeBK{C^{\K}_{\scalebox{\sizeC}{$\De \B$}}}
\newcommand\CDeBW{C^{\W}_{\scalebox{\sizeC}{$\De \B$}}}
\newcommand\CDeNO{C_{\scalebox{\sizeC}{$\De \NO$}}}
\newcommand\CDeNOK{C^{\K}_{\scalebox{\sizeC}{$\De \NO$}}}
\newcommand\CDeNOW{C^{\W}_{\scalebox{\sizeC}{$\De \NO$}}}
\newcommand\CDeNOT{C_{\scalebox{\sizeC}{$\De \NOT$}}}
\newcommand\CDeNOTK{C^{\K}_{\scalebox{\sizeC}{$\De \NOT$}}}
\newcommand\CDeNOTW{C^{\W}_{\scalebox{\sizeC}{$\De \NOT$}}}
\newcommand\CDela{C_{\scalebox{\sizeC}{$\Dela$}}}
\newcommand\CDelaK{C^{\K}_{\scalebox{\sizeC}{$\Dela$}}}
\newcommand\CDelaW{C^{\W}_{\scalebox{\sizeC}{$\Dela$}}}
\newcommand\CDeinvla{C_{\scalebox{\sizeC}{$\Deinvla$}}}
\newcommand\CDehS{C_{\scalebox{\sizeC}{$\DehS$}}}
\newcommand\CDehSK{C^{\K}_{\scalebox{\sizeC}{$\DehS$}}}
\newcommand\CDehSW{C^{\W}_{\scalebox{\sizeC}{$\DehS$}}}
\newcommand\CDeinvaverS{C_{\scalebox{\sizeC}{$\De \aver{\S}^{\!-1}$}}}
\newcommand\CDeinvaverSK{C^{\K}_{\scalebox{\sizeC}{$\De \aver{\S}^{\!-1}$}}}
\newcommand\CDeinvaverSW{C^{\W}_{\scalebox{\sizeC}{$\De \aver{\S}^{\!-1}$}}}
\newcommand\rhoj{{\rho_j}}
\newcommand\EKj{{E_{\K_j}}}
\newcommand\EWj{{E_{\W_j}}}
\begin{document}
\title[On the convergence of flow map parameterization methods]{
   On the convergence of flow map parameterization methods for
   whiskered tori in quasi-periodic Hamiltonian systems
}
\date{\today}

\author{{\'A}lvaro Fern{\'a}ndez-Mora$^{\mbox{1}}$}
\address[1]{Departament de Matem\`atiques i Inform\`atica,
Universitat de Barcelona, Gran Via 585, 08007 Barcelona, Spain.}
\email{alvaro@maia.ub.es (corresponding author)}

\author{{\`A}lex Haro$^{\mbox{1,2}}$}
\address[2]{Centre de Recerca Matem\`atica, Edifici C, Campus Bellaterra, 08193 Bellaterra, Spain}
\email{alex@maia.ub.es}

\author{J. M. Mondelo$^{\mbox{3}}$}
\address[3]{Departament de Matem\`atiques, Universitat Aut\`onoma
de Barcelona, Av.~de l'Eix Central, Edifici C, 08193
Bellaterra (Barcelona), Spain.}
\email{josemaria.mondelo@uab.cat}

\begin{abstract}
In this work, we obtain an {\em a-posteriori} theorem for
the existence of partly hyperbolic invariant tori in analytic Hamiltonian
systems: autonomous, periodic, and quasi-periodic. The method of
proof is based on the convergence of a KAM iterative scheme to
solve the invariance equations of tori and their invariant bundles under
the framework of the parameterization method.
Starting from parameterizations analytic in a complex strip and
satisfying their invariance equations approximatly, we derive
conditions for the existence of analytic parameterizations in a
smaller strip satisfying the invariance equations exactly.
The proof relies on the careful treatment of the analyticity loss
with each iterative step and on the control of geometric properties
of symplectic flavour. We also provide all the necessary explicit
constants to perform computer assisted proofs.

\end{abstract}

\maketitle


\section{Introduction}
Suppose we have a dynamical system for which we know an invariant
torus exists. A very natural question is whether the invariant
torus persists under perturbations. An answer to this
question came from the pioneering works of Kolmogorov, Arnold,
and Moser \cite{Kolmogorov54, Arnold63a, Moser62} on the 
persistence of invariant tori under {\em small enough}
perturbations of integrable systems---what is today known as KAM
theory. A key ingredient in the proofs on persistence is the
assumption that the perturbation is {\em small enough}.
Nonetheless, numerical experiments strongly suggest that persistence
occurs beyond the perturbative regime.

The {\em small enough} assumption on the perturbation was
dropped in what was originally called KAM theory without
action-angle variables in \cite{GonzalezJLV05}, where the authors obtain
theorems for the existence of invariant Lagrangian tori in
Hamiltonian systems. Existence is obtained from the convergence of an iterative
procedure, in a scale of Banach spaces, for the embedding of the
torus satisfying a functional equation. Instead of a small enough
perturbation, what is necessary is an approximate embedding
satisfying a functional equation with an error that is {\em small
enough}. 
The method of proof is based on the ideas of
the parameterization method introduced in \cite{CabreFL03a,
CabreFL03b, CabreFL05} for invariant manifolds of fixed points.
Similar ideas where used to prove the
existence of, and to compute, normally hyperbolic invariant tori
for quasi-periodic maps in the trilogy \cite{HaroL06b, HaroL06a,
HaroL06c}. For the parameterization method in a variety of different
contexts see e.g.,
\cite{FontichLS09,HuguetLS12,LuqueV11,CallejaCL13a,CanadellH17a,HM21,gonzalez2022}
and for a survey see \cite{HaroCFLM16}.
This kind of theorems are known as {\em a-posteriori} and
require some initial data from where a condition for existence
can be checked in a finite number of computations. In
combination with rigorous numerical techniques, it is possible to
perform computer assisted proofs. This approach led in
\cite{FiguerasHL17} to the formulation of a general methodology
for such kind of proofs---using the parameterization method---for
Lagrangian invariant tori. A modified approach that leads to
sharp estimates for the condition for existence was introduced in
\cite{Villanueva17} and similar ideas for systems with extra first
integrals were developed in \cite{FH24}.  

In this work, we focus on tori invariant under vector fields, for
which an extended approach is to look for codimension one tori
invariant under flow maps. One option is to solve numerically the
invariance equation with collocation methods \cite{GomezM01}.
However, this approach suffers from the large matrix problem,
which restricts the dimension of the invariant tori by the
computational cost of iterative steps. Another option is to use
Floquet theory to compute reducible invariant tori and avoid the
large matrix problem \cite{gimeno22}. Under the parameterization
method, efficient algorithms were introduced in \cite{HM21} to
compute invariant tori for autonomous systems in problems of
celestial mechanics. The method was later generalized to the
quasi-periodic setting in \cite{FMHM24}. 

We build on \cite{HM21,FMHM24} and obtain an 
{\em a-posteriori} theorem for partly hyperbolic invariant tori
and their invariant bundles in quasi-periodic Hamiltonian
systems. We use flow maps related to the
natural frequencies of tori to reduce the dimension of the tori
by one and do not use an autonomous reformulation by introducing
fictitious {\em action} coordinates---this point of view allows
the theorem to be applied to both autonomous and quasi-periodic
Hamiltonian systems and constitutes the proof on the convergence
of the flow map methods of \cite{HM21, FMHM24}. Additionally, we
obtain both the stable and unstable bundles simultaneously which
provides a clear geometrical picture of the tangent space on the
torus. The proof relies on geometric properties of symplectic flavour
that hold approximately when we have parameterizations that
satisfy their invariance equations approximately. We obtain
approximate geometric lemmas for such properties that, together
with the lemma on quadratically small averages obtained in
\cite{FMHM24}, control the error in the KAM iterative procedure.
The new error in the invariance equations is controlled by 
explicit constants which require the
careful treatment of the analyticity loss with each iterative
step. The proof concludes by obtaining conditions for convergence
of the KAM iterative procedure. 

The structure of this work is the following. In Section
\ref{sec:setting} we introduce the symplectic and analytic
setting as well as some preliminary notions. 
Section \ref{sec:invariant tori} describes partly hyperbolic
invariant tori, the iterative procedure for their computation,
and our main result: the KAM theorem. In Section \ref{sec:some lemmas}
we state and prove the approximate geometric lemmas that we use
to control the new errors after one step of the iterative
procedure; for which we obtain estimates in Section \ref{sec:tori
bundle correction}. Lastly, in Section \ref{sec:proof KAM} we
conclude the proof of the KAM theorem and in Appendix
\ref{ap:constants} we provide tables with all the explicit
constants obtained in our estimates.

\section{Setting}\label{sec:setting}

\subsection{Basic notation}\label{sec:basic notation}
We consider the vector spaces $\RR^m$ and
$\CC^m$ equipped with
the norm
\[
        |v|=\max_{i=1,\dots,m}|v_i|,
\]
which induces the following norm in the spaces of $n_1\times n_2$ 
matrices with components in $\RR$ or $\CC$
\[
        |M|=\max_{i=1,\dots,n_1}\sum_{j=1,\dots,n_2}|M_{ij}|.
\]
We denote by $M^\ttop$ the transpose of the matrix $M$. Also, 
the $n\times n$ identity and zero matrices are denoted by $I_n$
and $\cO_n$, respectively. Meanwhile, the $n-$dimensional zero
vector is denoted by $0_n$. When it is clear from the context, 
empty blocks denote zero matrices of suitable dimensions.
Let $\TT^\ell=\RR^\ell/\ZZ^\ell$ be the standard $\ell-$dimensional
torus.
For $\al\in\RR^\ell$, we define the
rotation map $\Ral:\TT^{\ell}\to\TT^{\ell}$ as
\[
   \Ral(\vp)=(\vp+\al)\ \mod 1.
\]
Similarly, for $(\om,\al)\in \RR^{d+\ell}$, we define the
rotation map $\Romal:\TT^{d+\ell}\to\TT^{d+\ell}$ as
\begin{equation}\label{eq:romal}
   \Romal(\te,\vp)=(\te+\om,\vp+\al)\ \mod 1.
\end{equation}
Throughout the exposition, $(\om,\al)$ will be fixed and,
occasionally, we will use the notation $\bar\te:=\te+\om$, and $\bar\vp:=\vp+\al$. 


\subsection{Symplectic setting} \label{sec:symplectic setting}
We assume we have an exact symplectic form $\bOm$, defined on an
open set $U\subset\RR^{2n}$, with matrix representation
$\Om:U\to\RR^{2n\times 2n}$. We also assume we have an analytic
Hamiltonian $H:U\times\TT^\ell\to\RR$ that depends on time
quasi-periodically as $H(z,\vp) = H(z,\hat\al t)$ for frequencies
$\hatal\in\RR^\ell$.  We can then construct
$X:U\times\TT^\ell\to\RR^{2n}$ as 
\begin{equation*}
   X(z,\vp):=\Om(z)^{-1}\Dif_z H(z,\vp)^\ttop,
\end{equation*}
and on $U\times\TT^\ell$, we can define
the vector field $\tilde
X:U\times\TT^\ell\to\RR^{2n}\times\RR^\ell$ as follows
\begin{equation}\label{eq:tVF}
   \begin{pmatrix}
    \dot z \\ \dot \vp
   \end{pmatrix}=\tilde X(z,\vp):=
   \begin{pmatrix}
   X(z,\vp) \\ \hatal
   \end{pmatrix}.
\end {equation}
We can think of $U\times\TT^\ell$ as a bundle with base
$\TT^\ell$ and of $\tilde X$ as a fiberwise Hamiltonian vector
field---in each fiber, we have a symplectic vector
structure given by $\Om$ and a Hamiltonian vector field
$X(\cdot,\vp):U\to\RR^{2n}$. Our goal is to find partly
hyperbolic invariant tori for the vector field $\tilde X$.

Let us denote the flow associated to $\tilde X$ by
$\tphi:D\subset \RR\times U\times \TT^\ell\to
U\times\TT^\ell$, where $D$ is the open set domain of definition
of the flow. That is, for every $(z,\vp)$ let
$I_{z,\vp}\subset\RR$ be the maximal interval of existance of 
$\tphi(\cdot,z,\vp)$, then 
\[
   D=\{(t,z,\vp)\in\RR\times U \times\TT^\ell ~|~ t\in I_{z,\vp}
\}.
\]
Notice that $\tphi$ adopts the form 
\[
   \tphi(t,z,\vp) = \begin{pmatrix}
   \phi(t,z,\vp)\\ \vp+\hatal t
   \end{pmatrix},
\]
where the evolution operator $\phi$ satisfies
\begin{equation*}
   \frac{\pd}{\pd t}\phi(t,z,\vp) = X\big(\phi(t,z,\vp),\vp+\hatal
   t\big), \quad \phi(0,z,\vp) = z.
\end{equation*}
From this point onwards, we will adopt the standard notations
$\tphi_t(z,\vp)=\tphi(t,z,\vp)$ and $\phi_t(z,\vp) =
\phi(t,z,\vp)$.

Since the symplectic form is exact, $\bOm=\dif\bal$, where $\bal$ is the
action 1-form. Then, the matrix representation of $\bOm$ is given
by
\[
\Om(z)=\Dif a(z)^\ttop-\Dif a(z),
\]
where $a:U\to\RR^{2n}$ and $a(z)^\ttop$ is the matrix
representation of $\bal$ at $z$. Also, because $\bOm$ is exact
symplectic, $\phi_t$ is a fiberwise exact symplectomorphism.
That is, for each $\vp\in\TT^\ell$ and fixed $t$, $\phi_t$ satisfies symplecticity
\[
\Dif_z\phi_t(z,\vp)^\ttop\Om\big(\phi_t(z,\vp)\big)
\Dif_z\phi_t(z,\vp) = \Om(z)
\]
and exactness
\[
a\big(\phi_t(z,\vp)\big)^\ttop\Dif_z\phi_t(z,\vp)-a(z)^\ttop=\Dif_z
p_t(z, \vp),
\]
for some primitive function $p_t:U\times\TT^\ell\to\RR$, see
\cite{FMHM24} for details.

Let us also assume we have an almost-complex structure $\bJ$ on
$U$, compatible with the symplectic form, with matrix representation 
$J:U\to\RR^{2n\times 2n}$. That is, $\bJ$ is
anti-involutive and symplectic---in coordinates, these properties
read
\[
   J(z)^2=-I_{2n},\quad
J(z)^\ttop\Om(z)J(z)=\Om(z).
\]
The almost complex structure and the symplectic form induce a
Riemannian metric $\bg$ with matrix representation
$G:U\to\RR^{2n\times 2n}$ constructed as $G(z)=-\Om(z)J(z)$.

A particular example of a compatible tripple $(\bOm,\bJ,\bg)$ is
when $\bOm$ is the standard symplectic form, i.e., 
\[
   \Om_0(z)=
   \begin{pmatrix}
      \cO_n & -I_n\\
      I_n & \cO_n
   \end{pmatrix},
\] $\bg$ is the Euclidean metric, and $\bJ$ is given by the
standard linear complex structure on $\RR^{2n}$ coming from the
complex structure on $\CC^n$. That is,
\[
   G_0(z) = \begin{pmatrix}
      I_n & \cO_n\\
      \cO_n & I_n
      \end{pmatrix}, \quad
      J_0(z) = 
   \begin{pmatrix}
      \cO_n & -I_n\\
      I_n & \cO_n
   \end{pmatrix}.
\]

\subsection{Analytic setting}\label{sec:analytic setting}
We assume we have a real analytic Hamiltonian. Therefore, 
we will work with real analytic functions defined in
complex neighborhoods of real domains. Let us construct the
complex neighborhood of $\TT^\ell$ as the following complex strip of
width $\rho$
\[
   \TT_\rho^\ell=\{\te\in\CC^\ell/\ZZ^\ell : \Abs{{\rm Im}\ \te}<\rho\}.
\]
Furthermore, assume we have a holomorphic extension
$\KO:\TT^d_r\times\TT^\ell_r\to\CC^{2n}$ of a map
$\KO:\TT^d\times\TT^\ell\to\RR^{2n}$, for some $d$ and $r>0$,
where we abuse notation and denote both the map and its
holomorphic extension by $\KO$.
Let us then construct the sets $\U\subset\CC^{2n}$,
$\tU\subset\CC^{2n}\times\TT^\ell_r$, and $\tU_\vp\subset\CC^{2n}$ as 
\begin{align}\label{eq:defU}
   \U&:=\{z\in\CC^{2n} : \exists
   (\te,\vp)\in\TT^d_r\times\TT^\ell_r :
\Abs{z-\KO(\te,\vp)}<R\},\\\nonumber
   \tU:&=\{(z,\vp)\in\CC^{2n}\times\TT^\ell_r : \exists
   \te\in\TT^d_r : \Abs{z-\KO(\te,\vp)}<R\},\\\nonumber
   \tU_\vp:&=\{z\in\CC^{2n} : \exists
   \te\in\TT^d_r : \Abs{z-\KO(\te,\vp)}<R\},
\end{align}
for some $R>0$. For analytic functions $f:\U\to\CC$, we consider the
norm
\begin{equation}\label{eq:normuU}
        \norm{f}_\U=\sup_{z\in\U}|f(z)|,
\end{equation}
that extends naturally 
for matrix-valued maps $M:\U\to\CC^{n_1\times n_2}$, with
$M_{ij}$ analytic, as
\begin{equation}\label{eq:normMU}
        \norm{M}_\U=\max_{i=1,\dots,n_1}\sum_{j=1,\dots,n_2}
        \norm{M_{ij}}_\U.
\end{equation}
For their derivatives, we define the norms
\begin{equation}\label{eq:normDU}
\norm{\Dif^k f}_\U=\sum_{l_1,\dots,l_k}\Norm{\frac{\partial^k
f}{\partial z_{l_1}\dots\partial z_{l_k}}}_\U,\quad
\norm{\Dif^k M}_\U=\max_{i=1,\dots,n_1}
\sum_{j=1,\dots,n_2}\norm{\Dif^k M_{ij}}_\U,
\end{equation}
where each index $l_1,\dots,l_k$ goes from 1 to $2n$.
Furthermore, the action of the
$k-$order derivative on $v_1,\dots,v_k\in\CC^{n_2}$
satisfies the bounds 
\[
   \norm{\Dif^k M(z) [v_1,\dots,v_k]}_{\U}\leq\norm{\Dif^k M}_{\U}
   \Abs{v_1}\dots\Abs{v_k},
\]
where
\begin{align*}\nonumber
   \left(\Dif^k M(z) [v_1,\dots,v_k]\right)_{ij}&=
   \Dif^k M_{ij}(z)
   [v_1,\dots,v_k]\\
&=\sum_{l_1,\dots,l_k}\frac{\partial^k
M_{ij}}{\partial_{z_{l_1}},\dots,\partial_{z_{l_k}}}(z)
\big(v_{1}\big)_{l_1},\dots,\big(v_k\big)_{l_k}.
\end{align*}
Lastly, for analytic functions $f:\tU\to\CC$ we consider the norm
\[
   \norm{f}_{\tU} =
   \sup_{\vp\in\TT^\ell_r}\norm{f(\cdot,\vp)}_{\tU_\vp}
\]
and the analogous norm to \eqref{eq:normMU} for matrix-valued maps.

\subsubsection{Spaces of quasi-periodic real-analytic functions}
Let us denote by $\cA(\TT^{d+\ell}_\rho)$ the Banach space of
real-analytic functions $f:\TT^{d+\ell}_\rho\to\CC$---such that
$f(\TT^{d+\ell})\subset\RR$---that can be
continuously extended to the closure $\bar\TT^{d+\ell}_\rho$
and endowed with the norm
\[
   \norm{f}_\rho=\sup_{(\te,\vp)\in\TT^{d+\ell}_\rho}
   \abs{f(\te,\vp)}.
\]
We will also use the notation $\cA(\TT^{d+\ell}_\rho)^{m}$ for
the analogous space for functions $f:\TT^{d+\ell}_\rho\to\CC^m$,
and the natural extension of the norm for functions
$M:\TT^{d+\ell}_\rho\to\CC^{n_1\times n_2}$
\[
        \norm{M}_\rho=\max_{i=1,\dots,n_1}\sum_{j=1,\dots,n_2}
        \norm{M_{ij}}_\rho.
\]
For the derivatives of quasi-periodic real-analytic functions, we
use Cauchy estimates. For any $\de\in(0,\rho)$, the derivative
operator
$\partial_i:\cA(\TT^{d+\ell}_\rho)\to\cA(\TT^{d+\ell}_{\rho-\de})$
satisfies the bounds
\begin{equation*}
\norm{\partial_i f}_{\rho-\de}\leq\frac{1}{\de}\norm{f}_\rho,\quad
\norm{\Dif_\te f}_{\rho-\de}\leq\frac{d}{\de}\norm{f}_\rho,\quad
\norm{\Dif_\vp f}_{\rho-\de}\leq\frac{\ell}{\de}\norm{f}_\rho.
\end{equation*}
Additionally, the Fourier representation of $f$ is given by 
\[
   f(\angles)=\sum_{(k,j)\in\ZZ^{d+\ell}}\hat
   f_{jk}e^{\bi2\pi(k\te+j\vp)},
\]
where $\hat f_{jk}\in\CC$ and 
\[
   \hat f_{00}=\int_{\TT^{d+\ell}} f(\angles)d\te d\vp
\]
is the average of $u$ that we denote as $\aver{f}$. 
Lastly, for functions $f:\tU\to\CC$ and 
$M:\tU\to\CC^{n_1\times n_2}$, we define the analogous norms to 
\eqref{eq:normuU}, \eqref{eq:normMU}, and \eqref{eq:normDU}.

\subsection{Cohomological equations in $\TT^{d+\ell}$}\label{sec:coho}
We dedicate this section to two types of cohomological equations that
naturally arise in KAM theory. Albeit the theory is well known,
see e.g. \cite{Russmann75}, we tailor the exposition to be well
suited for our purposes.

\subsubsection{Non-small divisors cohomological equations}\label{sec:cohoNsmall}
For $a,b\in\RR$ such that $\abs{a}\neq\abs{b},~\om\in\RR^d$,
and $\al\in\RR^\ell$, we define the cohomological operator
$\Lie{\om\al}^{ab}:\cA(\TT^{d+\ell}_\rho)\to\cA(\TT^{d+\ell}_\rho)$
acting on functions $\xi:\TT^{d+\ell}_\rho\to\CC$ as 
\begin{equation*}
\Lie{\om\al}^{ab}
\xi:=a\xi-b\xi\comp\Romal,
\end{equation*}
where $\Romal$ is defined as in \eqref{eq:romal}. We also define the Rüssmann operator
$\R{\om\al}^{ab}:\cA(\TT^{d+\ell}_\rho)\to\cA(\TT^{d+\ell}_\rho)$
acting on functions $\eta:\TT^{d+\ell}_\rho\to \CC$
as the solution of the cohomological equation
\begin{equation}\label{eq:cohogeneral}
   \Lie{\om\al}^{ab} \xi = \eta.
\end{equation}
That is, $\xi=\R{\om\al}^{ab}\eta$ solves \eqref{eq:cohogeneral}. 
Expanding $\xi$ and $\eta$ in their Fourier series, we obtain the
converging expansion 
\[
   \left(\R{\om\al}^{ab}\eta\right)(\angles) =
   \sum_{(k,j)\in\ZZ^{d+\ell}}\hat\xi_{jk}e^{\bi2\pi(j\te+k\vp)},\quad
   \hat\xi_{jk}=\frac{\hat\eta_{jk}}{a-b e^{\bi2\pi(j\om+k\al)}}.
\]
Additionally, we have the estimate
\begin{equation}\label{eq:estR2}
   \norm{\xi}_{\rho}\leq\frac{1}{\big|\abs
   {a}-\abs{b}\big|}\norm{\eta}_{\rho}.
\end{equation}
We will also use the notation $\Lie{\om\al}^{ab}=\Lop^{ab}$ and
$\R{\om\al}^{ab}=\Rop^{ab}$, when it is clear from the context.

\subsubsection{Small divisors cohomological
equations}\label{sec:cohosmall}
We also define the cohomological operator 
$\Lie{\om\al}:\cA(\TT^{d+\ell}_\rho)\to\cA(\TT^{d+\ell}_\rho)$
as 
\[
   \Lie{\om\al}\xi:=\xi-\xi\comp\Romal,
\]
and for any $\de\in(0,\rho]$, we define the Rüssmann
operator
$\R{\om\al}:\cA(\TT^{d+\ell}_\rho)\to\cA(\TT^{d+\ell}_{\rho-\de})$ 
as the zero-average solution of 
\begin{equation}\label{eq:cohosmall}
   \Lie{\om\al}\xi=\eta-\aver{\eta}.
\end{equation}
That is $\xi=\R{\om\al}\eta$ solves \eqref{eq:cohosmall} with
$\aver{\xi}=0.$ Again, expanding $\xi$ and $\eta$ in their Fourier
series, we obtain the formal expansion
\[
   \left(\R{\om\al}\eta\right)(\angles) =
   \sum_{(k,j)\in\ZZ^{d+\ell}}\hat\xi_{jk}e^{\bi2\pi(j\te+k\vp)},\quad
   \hat\xi_{jk}=\frac{\hat\eta_{jk}}{1-e^{\bi2\pi(j\om+k\al)}},
\]
where the series is uniformly convergent in a narrower strip if
$(\om,\alpha)$ is Diophantine. 
\begin{definition}\label{def:diophantine}
The rotation vector $(\om,\al)\in\TT^{d+\ell}$ is Diophantine if 
there exists $\gamma>0$ and $\tau\geq d + \ell$
such that for all $n\in\ZZ$ and $(j,k)\in\ZZ^{d}\times\ZZ^\ell
\setminus \{ 0\}$
\[
   |j\cdot\omega+k\cdot\alpha-n|\geq
   \frac{\gamma}{\left(|j|_1+|k|_1\right)^\tau}, 
\]
where $|\cdot|_1$ is the $\ell^1$-norm. 
\end{definition}

Then, we have R\"usmann estimate
\begin{equation}\label{eq:estR1}
   \norm{\xi}_{\rho-\de}\leq
   \frac{\CR}{\ga\de^\tau}\norm{\eta}_\rho,
\end{equation}
where $\CR$ is the R\"ussmann constant. See
\cite{Russmann75} for details. We will also use the notations
$\Lie{\om\al}=\Lop$ and $\R{\om\al}=\Rop$.

\section{Partially hyperbolic invariant tori}\label{sec:invariant tori}
We are interested in $(d+\ell+1)-$dimensional tori with
frequencies $\hatom\in\RR^{d+1}$ and $\hatal\in\RR^\ell$---
invariant for the vector field \eqref{eq:tVF}---and in their
invariant bundles.  Equivalently, for
$\hatom=:\frac{1}{T}(\om,1)$ and $\al:=\hatal T$, where
$\om\in\TT^d$ and $\al\in\TT^\ell$, we can look for
$(d+\ell)-$dimensional tori $\cK$, invariant for the flow map
$\phi_T$, and in bundles $\cW$ with base $\cK$, invariant under
the differential of $\phi_T$, see
\cite{FMHM24,HM21}. In the present work, we focus on invariant tori with
rank 1 bundles---which corresponds to $d=n-2$. This choice is
motivated by applications in celestial mechanics such as
quasi-periodic perturbations of restricted three body problems.
In the spirit of the parameterization method, we are 
interested in fiberwise embeddings $\K:\TT^{d+\ell}\to U$
and $\W:\TT^{d+\ell}\to\RR^{2n}$, with rate $\la\in\RR\backslash\{1,-1\}$,
satisfying the following invariance equations
\begin{gather}\label{eq:invK}
   \phiK - \K\comp\Romal = 0,\\\label{eq:invW}
   \DzphiK \W-\W\comp\Romal\lambda=0,
\end{gather}
where $(\K,\id)$ stands for the following bundle map over
the identity
\begin{align*}
   (\K,\id):\quad \TT^{d+\ell} &\longrightarrow
   U\times\TT^\ell \\
(\angles)&\longmapsto \big(\K(\angles),\vp\big).
\end{align*}
Without loss of generality, we can assume $0<\abs{\la}<1$.
We will refer to $K$ and $W$ as the parameterizations of $\cK$
and $\cW$, respectively. It can be shown that due to certain
geometric properties, see \cite{FMHM24}, if $(\K,\W,\la)$
satisfy \eqref{eq:invK} and \eqref{eq:invW}, then there exists a
set of coordinates given by a frame $\P$ such that the
differential of the flow map reduces to an upper triangular
matrix-valued map---constant along the diagonal. That is, the
linearized dynamics in the coordinates given by $\P$ reduce to
$\tLa:\TT^{d+\ell}\to\RR^{2n}$ as 
\begin{equation}\label{eq:reducibility}
    (\P\comp\Romal)^{-1}\DzphiK\P = \tLa,
 \end{equation} 
where
\begin{equation}\label{eq:defLas}
    \tLa:=\left(
         \begin{array}{c|c}
            \La & \tS \\
            \hline
                   & \mybox{\La^{-\ttop}}
         \end{array}
      \right),\quad
      \La:=\left(
         \begin{array}{c|c}
            I_{n-1}&  \\
            \hline
                   & \la 
         \end{array}
      \right),
\end{equation}
and $\tS:\TT^{d+\ell}\to\RR^{n\times n}$. In Section
\ref{sec:some lemmas} we provide expressions and bounds for the
geometric properties when \eqref{eq:invK} and \eqref{eq:invW}
only hold approximately.

\subsection{Adapted frames}\label{sec:frame}
In this section we briefly cover the construction of adapted
frames that reduce the linearization of $\phi_T$ on $\cK$ to an
upper triangular matrix-valued map, constant along the diagonal. 
In what follows, assume $\cK$ is an invariant torus under
$\phi_T$, $\cW$ is the invariant bundle with base $\cK$, and that
the dynamics in the hyperbolic linear subspace are reduced to a constant
contraction with rate $\abs{\la}$. For a detailed
description of the construction of adapted frames and their
geometric properties see \cite{FMHM24}.

We construct a frame $\P:\TT^{d+\ell}\to\RR^{2n\times2n}$ as
the juxtaposition of two subframes
$\L,\N:\TT^{d+\ell}\to\RR^{2n\times n}$, i.e., 
\[
   \P = \big(\L\quad \N \big).
\]
The subframe $\L$ is tangent to the torus whereas the
normal subframe $\N$ complements $\L$ to a full basis of
$T_\cK U$, i.e., the tangent bundle of $U$ restricted to $\cK$.
We proceed by first constructing the subframe 
$\L$ as follows 
\begin{equation*}
 \L:=\Big({\rm D}_{\te}\K
 \quad \mathcal{X} \quad W \Big),
\end{equation*}
where
\begin{equation*}
  \mathcal{X}:=X\comp(\K,\id) - {\rm
  D}_\vp  \K\hatal,
\end{equation*}
and the subframe $\N$ as
\begin{equation*}
\N = \NO + \L\A,
\end{equation*}
for certain $\A:\TT^{d+\ell}\to\RR^{n\times n}$, where
\begin{align}\label{eq:defN0}
   \NO &=
\J\comp\K\L\B,\\\nonumber
   \B&=\GL^{-1},\\\nonumber
   \GL&=\L^\ttop
   \G\comp\K\L.
\end{align}
The role of $\A$ is to obtain simultaneously the stable and
unstable bundles of the invariant torus but its use is not
necessary, i.e., we could use the frame $\hP=(\L\ \NO)$. We will
come back to this topic but first let us emphasize some 
properties of these matrix-valued maps. The frame $\P$ is
symplectic with respect to the standard symplectic form, i.e.,
$\P$ satisfies
\begin{equation*}
\P^\ttop\OmK \P = \OmO,
\end{equation*}
from where we obtain 
\begin{equation}\label{eq:invP}
   \P^{-1} = -\OmO\P^\top\OmK.
\end{equation}
Moreover, $\A$ needs to be symmetric to preserve the
symplecticity of $\P$ and $\GL$ is invertible since $\K$ is a
fiberwise embedding. That is, for each $\vp\in\TT^\ell$,
$\K(\cdot,\vp):\TT^d\to U$ is an embedding.  Lastly, both $\GL$
and $\B$ are symmetric.

To see how we obtain both the stable and unstable bundles with
the right choice of $\A$, let us denote the torsion of the normal bundles
parameterized by $\NO$ and $\N$, when transported by
$\DzphiT\comp(\K,\id)$, by the matrix-valued maps
$\hS,\tS:\TT^{d+\ell}\to\RR^{n\times n}$ 
\begin{align}\label{eq:defshat}
   \hS &:=
   \NO^\ttop\comp\Romal\OmbarK
   \DzphiK\NO,\\\nonumber
   \tS &:=
   \N^\ttop\comp\Romal\OmbarK
   \DzphiK\N,
\end{align}
where a computation reveals
\begin{equation}\label{eq:tS-hS}
   \tS = \hat S + \La \A -
    \A^\ttop\comp\Romal\La^{-\ttop} .
\end{equation}
For the details see \cite{HM21,FMHM24}.
From \eqref{eq:reducibility} and \eqref{eq:defLas}, observe that
if we choose $\A$ such that $\tS$ adopts the form
\begin{equation}\label{eq:reducedS}
   \tS=
\left(
   \begin{array}{c|c}
      \S & \\
      \hline
        & 
\end{array}
\right)
,
\end{equation}
where $\S:\TT^{d+\ell}\to\RR^{(n-1)\times(n-1)}$, 
then the last column of $\P$, that we denote by
$\widetilde\W$, satisfies
\begin{equation}\label{eq:unstablebundle}
   \DzphiK\widetilde\W -\widetilde\W\comp\Romal\la^{-1}=0. 
\end{equation}
Therefore, $\W$ and $\widetilde\W$ parameterize both 
the stable and unstable bundles of $\cK$, respectively.
\begin{remark}\label{rem:hSsym}
When $\K$ parameterizes an invariant torus and the bundle is
invariant, the torsion $\hS$
satisfies the symmetry condition
\begin{equation}\label{eq:hSsym}
   \La^{-1}\hS - \left(\La^{-1}\hS\right)^\ttop=0.
\end{equation}
Then, for $\tS$ as in \eqref{eq:reducedS} and a suitable
symmetric $\A$, \eqref{eq:tS-hS} holds.
However, the symmetry condition for $\hS$
only holds approximately when $(\K,\W,\la)$ satisfy \eqref{eq:invK}
and \eqref{eq:invW} approximately,  see Section \ref{sec:ap_symmetry}.
\end{remark}
Let us define splittings for $\hS$ and $\A$ into blocks of sizes
$(n-1)\times(n-1), (n-1)\times 1, 1\times(n-1), 1\times 1$ as
\begin{equation}\label{eq:splithS}
\hS = \begin{pmatrix} \hS_1 & \hS_2 \\
   \hS_3 & \hS_4 \end{pmatrix}, \quad 
   \A=\begin{pmatrix}
      \Aone & \Atwo\\
      \Athree & \Afour
   \end{pmatrix}.
\end{equation}
Then, condition \eqref{eq:reducedS} reads 
\begin{align}\label{eq:cohoA1}
   \S&=\hS_1 +\phantom{\la}\Aone
   - \Aone^\ttop\comp\Romal,\\
   0&=\hS_2+ \phantom{\la}\Atwo - \Athree^\ttop\comp\Romal\lambda^{-1},
   \label{eq:cohoA2}\\
   0&=\hS_3+
   \la\Athree - \Atwo^\ttop\comp\Romal,
   \label{eq:cohoA3}\\
   0&=\hS_4+
   \la\Afour - \Afour\comp\Romal\lambda^{-1},
   \label{eq:cohoA4}
\end{align}
where we can take $\A_1=\cO_{n-1}$---consequently $\S=\hS_1$---and
solve \eqref{eq:cohoA4} as described in Section
\ref{sec:cohoNsmall}. Notice that \eqref{eq:cohoA2} and
\eqref{eq:cohoA3} are coupled non-small cohomological equations
that can be rewritten as
\begin{align}\label{eq:cohoA2coupled}
   0&=\left(\hS_2\la + \hS_3^\ttop\comp\Romal\la^{-1}\right)
   + \Atwo\la- \Atwo\comp\cR_{\om\al}^{\comp 2}\la^{-1},\\\label{eq:cohoA3coupled}
   0&=\left(\hS_3+\hS_2^\ttop\comp\Romal\right) + \la\Athree -
   \la^{-1}\Athree\comp\cR_{\om\al}^{\comp 2},
\end{align}
where $\cR_{\om\al}^{\comp 2}$ stands for $\Romal\comp\Romal$. Then, we can solve
\eqref{eq:cohoA2coupled} and \eqref{eq:cohoA3coupled} as 
non-small divisors cohomological equations. We will also refer to $\S$
as the reduced torsion or the {\it shear}.
\begin{remark}
From \eqref{eq:hSsym}, \eqref{eq:cohoA2coupled} and
\eqref{eq:cohoA3coupled}, it follows that $\A_2= \A_3^\ttop$.
Consequently, $\A$ is symmetric.
\end{remark}

\begin{remark}
From \eqref{eq:cohoA1}, it is possible to further reduce
$\S$ to a constant matrix $\S=\aver{\hS_1}$ by solving  
\[
   \Lop\Aone = \aver{\hS_1}-\hSone.
\]
\end{remark}

\subsection{Iterative procedure}\label{sec:quasi-Newton}
Let us define the error in the torus and bundle invariance
equations $\EK,\EW:\TT^{d+\ell}\to\RR^{2n}$ as
\begin{gather}\label{eq:defEK}
   \EK = \phiK - \K\comp\Romal,\\\label{eq:defEW}
   \EW = \DzphiK \W-\W\comp\Romal \lambda.
\end{gather}
The iterative scheme designed in \cite{FMHM24}
constructs sequences for $\K, \W,$ and $\la$ such that for the
limiting objects $\K_\infty, \W_\infty$, and $\la_\infty$ the
errors satisfy $\norm{E_{\K_\infty}}=0$ and
$\norm{E_{\W_\infty}}=0$ in some suitable norm. The sequences
are obtained recursively by choosing corrections
$\DeK,\DeW:\TT^{d+\ell}\to\RR^{2n}$, and $\Dela\in\RR$ such that,
for $\barK=\K+\DeK$, $\barW=\W+\DeW$, and $\barla=\la+\Dela$,
\eqref{eq:defEK} and \eqref{eq:defEW} vanish at first order. 
In this section, we describe non-rigorously one step of the
iterative scheme---the rigorous description is postponed to
Section \ref{sec:iterative lemma} and the convergence of the
scheme to Section \ref{sec:convergence}.

\subsubsection{One step on the torus}\label{sec:NewtonK}
Let us begin with \eqref{eq:defEK}. We compute the correction of
the torus parameterization in the coordinates given by
the frame $\P$. That is, we choose $\DeK=\P\xiK$, where
$\xiK:\TT^{d+\ell}\to\RR^{2n}$ satisfies  
\begin{equation}\label{eq:lineqDeK}
\DzphiK P\xiK - P\comp\Romal\xiK\comp\Romal = -\EK
\end{equation}
up to first order in $\EK$. 
Let us left-multiply the linearized equation \eqref{eq:lineqDeK} by the
approximate inverse of $\P\comp\Romal$ given by
$-\OmO\P^\ttop\comp\Romal\OmbarK$, see \eqref{eq:invP}. Then, we
obtain after neglecting higher order terms
\begin{equation}
   \label{eq:lineq2}
   \left( 
      \begin{array}{c|c}
         \La & \tS \\
         \hline
             & \mybox{\La^{-\ttop}}
      \end{array}
   \right)
   \xiK- \xiK\comp\Romal=\etaK,
\end{equation}
where 
\begin{equation*}
 \etaK=\OmO  \P^\ttop\comp\Romal\OmbarK\EK.
\end{equation*}
 We can now split $\xiK$ and $\etaK$ into
 $(n-1)\times1\times(n-1)\times1$ components as
\begin{equation*}
   \xiK=
      \begin{pmatrix}
         \xiaK\\ \xibK\\ \xicK\\ \xidK
      \end{pmatrix}
   ,\quad
   \etaK=
      \begin{pmatrix}
         \etaaK\\ \etabK\\ \etacK\\ \etadK
      \end{pmatrix},
\end{equation*}
and rewrite system \eqref{eq:lineq2} as follows
\begin{align}
   \xiaK+\S\xicK
         -\xiaK\comp\Romal &= \etaaK,\label{eq:corrtr1} \\
   \lambda\xibK-\xibK\comp\Romal 
      &= \etabK,\label{eq:corrtr2} \\
   \xicK-\xicK\comp\Romal
      &= \etacK-\langle\etacK\rangle,\label{eq:corrtr3} \\
   \lambda^{-1}\xidK-\xidK\comp\Romal
      &= \etadK.\label{eq:corrtr4}
\end{align}
Observe that we have subtracted $\aver{\etacK}$ in \eqref{eq:corrtr3}
in order to overcome the obstruction in the small divisors
cohomological equation---in \cite{FMHM24}, we proved that
$\aver{\etacK}$ is quadratically small so its contribution in
\eqref{eq:corrtr3} is of second order. Then,
\begin{align}\label{eq:xiK2}
        \xibK &= \Rop^{\la1}\etabK,\\\label{eq:xiK3}
        \xicK &= \Rop\etacK + \aver{\xicK},\\\label{eq:xiK4}
   \xidK &= \Rop^{\la^{\!\scalebox{0.5}{$-1$}}1}\etadK, 
\end{align}
where $\aver{\xicK}$ is free. We can then require
\begin{equation}\label{eq:averxiK3}
   \aver{\xicK} = \aver{\S}^{\!-1}\left<\etaaK -\S\Rop\etacK \right>  
\end{equation}
given the non-degeneracy condition $\det \aver{\S}\neq 0$.
Then
\begin{equation}\label{eq:xiK1}
   \xiaK = \Rop\bigg(\etaaK - \S\Big(\Rop\etacK +
   \aver{\xicK}\Big)\bigg) + \aver{\xiaK},
\end{equation}
where $\aver{\xiaK}$ is free and we can simply take as zero.
This freedom has to do with the freedom to choose the phase of
the generating torus $\cK$.

\subsection{One step on the bundle}
For the parameterization of the invariant bundle, 
we compute as well the correction in the coordinates given by the
frame $\P$. That is, we take $\DeW=\P\xiW$ where 
$\xiW:\TT^{d+\ell}\to\RR^{2n}$ satisfies
\begin{equation}\label{eq:linDeW}
   \DzphiK \P\xiW - \P\comp\Romal\xiW\comp\Romal\la-\W\comp\Romal\De\la
   =-\tildeEW,
\end{equation}
and
\begin{align}\label{eq:tildeEW}
        \tildeEW:&=\DzphiT\comp(\K+\DeK,\id)\W -
        \W\comp\Romal\la\\\nonumber
         &=
        \EW + \int_0^1\DDzphiT\comp\big(\K+s\DeK,\id\big)[\DeK,\W]
   ds.
\end{align}
We emphasize that albeit we compute $\DeK$ in the previous
section, we linearize around $\K$ and not $\barK = \K + \DeK$.
This allows us to use the same frame $\P$ to correct the torus
and the bundle. It is possible to consider only
terms up to first order in $\DeK$ in \eqref{eq:tildeEW}, i.e., 
\begin{equation}\label{eq:tildeEW2}
   \tildeEW=\EW + \DDzphiT\comp(\K,\id)[\DeK,\W]
\end{equation}
which can be easier to compute than \eqref{eq:tildeEW} in
practical applications. However, for the proof on convergence,
\eqref{eq:tildeEW2} requires upper bounds on the norm of
$\Dif^3_z\phi_T$, whereas \eqref{eq:tildeEW} requires bounds only
of $\Dif^2_z\phi_T$.
It is also possible to compute first $\barK$ and consider
\eqref{eq:invW} around $\barK$. Then, we can obtain $\xiW$ and $\Dela$ from
\begin{equation*}
\DzphiT\comp(\barK,\id)\bar\P\xiW -
\bar\P\comp\Romal\xiW\comp\Romal\la - \W\comp\Romal\Dela =
-E_{\barK\W},
\end{equation*}
where $\bar\P$ is the frame for the corrected parameterization
$\barK$ and $E_{\barK\W}$ corresponds to \eqref{eq:defEW} but on
$\barK$ instead of $\K$.
Notice that then we do not have to compute $\DDzphiT$ but we
have to compute two frames: for $\K$ and for $\barK$.
Computationally, this is an interesting option and it is in fact the
approach followed in \cite{FMHM24}. However, for the KAM theorem,
it would require to control both frames $\P$ and $\bar\P$.
In what follows, we consider only \eqref{eq:tildeEW}.

Let us left-multiply the linearized equation
\eqref{eq:linDeW} again by the approximate inverse of
$\P\comp\Romal$, i.e., by $-\OmO\P^\ttop\comp\Romal\OmbarK$, to obtain, after neglecting higher
order terms,
\begin{equation}\label{eq:coho-bundle}
   \left(
      \begin{array}{c|c}
      \La & \tS \\
      \hline
          & \mybox{\La^{-\ttop}}
      \end{array}
   \right)
   \xiW -\xiW\comp\Romal\la = \etaW +
   e_n\De\la,
\end{equation}
where
\begin{equation*}
\etaW:=\OmO  \P^\ttop\comp\Romal\OmbarK\tildeEW,
\end{equation*}
\[
   e_n:=\begin{pmatrix} 0_{n-1} \\ 1 \\ 0_{n-1}\\ 0 \end{pmatrix}.
\]
Using analogous splittings for $\etaW$ and $\xiW$ as in Section
\ref{sec:NewtonK}, system \eqref{eq:coho-bundle} reads
\begin{align*}
   \xiaW+\S\xicW
         -\xiaW\comp\Romal\la
      &= \etaaW, \\
   \la\xibW-\xibW\comp\Romal\la
         &= \etabW+\Dela,\\
   \xicW-\xicW\comp\Romal\la
      &= \etacW,\\
   \invla\xidW-\xidW\comp\Romal\la
      &= \etadW.
\end{align*}
We can then take $\Dela=-\aver{\etabW}$ 
and
\begin{align}\label{eq:xiW1}
   \xiaW&=
   \Rop^{1\la}
   \big(\etaaW-\S\Rop^{1\la}\etacW\big),\\
   \xibW&=\frac{1}{\la}\Rop\etabW
   +\aver{\xibW},\\
   \xicW&=\Rop^{1\la}\etacW,\\\label{eq:xiW4}
   \xidW&=\Rop^{\invla\la}\etadW,
\end{align}
where $\aver{\xibW}$ is free and we can take as zero.
This freedom has to do with the liberty to choose the size of the
frame generating the stable bundle of $\cK$.

\subsection{The KAM theorem}\label{sec:KAM theorem}
The convergence of the iterative process described in Section
\ref{sec:quasi-Newton} leads to an {\it a-posteriori} KAM theorem
on existence of partially hyperbolic invariant tori and its
invariant bundles. In what follows, recall the definitions for
the sets $\U$ and $\tU$ from Section \ref{sec:analytic setting}.

\begin{theorem}[KAM theorem for partially hyperbolic tori]\label{thm:KAM}
\begin{itemize}[leftmargin=*]
Let $\bOm=\dif \bal$ be an exact symplectic form defined in an open set
$U\subset\RR^{2n}$ and let $\bJ$ be an almost complex structure
on $U$ compatible with $\bOm$. Let $\bg$ be the induced metric
and $H:U\times\TT^\ell\to\RR$ be a real-analytic quasi-periodic Hamiltonian
with frequencies $\hatal\in \RR^{\ell}$. For $d=n-2$, let
$\om\in\RR^{d},~T>0,$ and $\al=\hatal T$ be such that
$(\om,\al)$ is Diophantine with constants $\ga>0,$ $\tau\geq
d+\ell$ and suppose we have  $\K:\TT^{d+\ell}\to U,~
\W:\TT^{d+\ell}\to\RR^{2n}$, and $\abs{\la}<1$. Assume the
following hypothesis.

\item [$H_1$]  There exists some $r,R>0$
such that the geometric objects $\bOm,~ \bal,~\bJ,~$ and $\bg$
can be analytically extended to the complex set
$\U\subset\CC^{2n}$ containing $U$, see \eqref{eq:defU}.  Moreover, for all
$t\in[0,T]$ and all $(z,\vp)\in\tU$, $\tphi_t(z,\vp)$ exists.
Then, there exist constants such that:
\begin{itemize}
        \item the matrix representations $\Om,~ a,~ \J,$ and $\G$
                satisfy
\begin{align*}
   \norm{\Om}_{\U} &\leq \cteOm, 
                          & \norm{\Dif \Om}_{\U}&\leq \cteDOm,
                          &\norm{\Da}_{\U} &\leq \cteDa, 
                          & \norm{\DaT}_{\U} &\leq \cteDaT,
\\
   \norm{\DDa}_{\U} &\leq \cteDDa,  
                               & \norm{\J}_{\U} &\leq \cteJ, 
                               & \norm{\J^\ttop}_{\U} &\leq \cteJT,
                               & \norm{\DJ}_\U & \leq \cteDJ,\\
   \norm{\DJT}_{\U} &\leq \cteDJT, 
                    &\norm{G}_{\U} &\leq \cteG, 
                    &\norm{\DG}_\U&\leq \cteDG.
\end{align*}
\item  The Hamiltonian $H$, the vector field $X$, and the flow
        map $\phi_T$ satisfy
\begin{align*}
 \norm{X}_{\tU} &\leq \cteX, 
& \norm{X^\ttop}_{\tU} &\leq \cteXT,
& \norm{\Dif_z X}_{\tU} &\leq \cteDzX,
& \norm{(\Dif_z X)^\ttop}_{\tU} &\leq \cteDzXT,\\
 \norm{\Dif^2_z H}_{\tU} &\leq \cteDDzH, 
& \norm{\DzphiT}_{\tU} &\leq \cteDzphiT,
& \norm{\DzphiTT}_{\tU} &\leq \cteDzphiTT,
& \norm{\DDzphiT}_{\tU} &\leq \cteDDzphiT.
\end{align*}
\end{itemize}

\item [$H_2$]\label{H:2} There exists $\rho\in(0,r)$ such that
   $\K\in\cA(\TT^{d+\ell}_\rho)^{2n}$, $\Dif
   \K\in\cA(\TT^{d+\ell}_\rho)^{2n\times(d+\ell)}$, and
$\W\in\cA(\TT^{d+\ell}_\rho)^{2n}$.
\begin{itemize}
   \item Moreover, there exist constants such that
\begin{align*}
   \norm{\DteK}_{\rho} &< \sigmaDteK,  
   &\norm{\DteKT}_{\rho} &< \sigmaDteKT,
   &\norm{\DvpK}_{\rho} &< \sigmaDvpK, 
   &\norm{\DvpKT}_{\rho} &< \sigmaDvpKT,\\
   \norm{\W}_{\rho} &< \sigmaW, 
   & \norm{\WT}_{\rho} &< \sigmaWT,
   & \norm{\NO}_{\rho} &< \sigmaNO,
   & \norm{\NOT}_{\rho} &< \sigmaNOT,\\
   \norm{\B}_\rho &< \sigmaB.
\end{align*}
\item Also, there exist $\sigmala<1$ and
   $\sigmainvla>1$  satisfying
\begin{equation*}
   \abs{\la} < \sigmala,\ \abs{\invla}<\sigmainvla.
\end{equation*}
\end{itemize}
\item[$H_3$]\label{H:3}
There exists a constant $\sigmainvaverS$ such that the shear
$\S$ in \eqref{eq:reducedS}, which coincides with the
block $\hat S_1$ in
\eqref{eq:splithS}, satisfies the non-degeneracy condition
$\big|\aver{\S}^{-1}\big|<\sigmainvaverS$.
\end{itemize}
Then, for every $\rho_{\infty}\in(0,\rho)$ and
$\de\in(0,\frac{\rho-\rho_\infty}{3})$, there exist a constant
$\fracC$ such that if the invariance errors 
\begin{gather*}
   \EK=\phiK-\K\comp\Romal,\\
   \EW=\DzphiK\W-\W\comp\Romal\la,
\end{gather*}
satisfy
\begin{equation}\label{eq:HKAM}
   \frac{\fracC}{\ga^2\de^{2\tau}}\E<1,
\end{equation} 
where 
\begin{equation}\label{eq:defE}
   \E:=\Max{\frac{\norm{\EK}_\rho}{\ga^2\de^{2\tau}},\
   \norm{\EW}_\rho},
\end{equation}
then there exist $\la_\infty$, an invariant torus
$\cK_\infty=\K_\infty(\TT^{d+\ell})$ 
and an invariant bundle $\cW_\infty=\W_\infty(\TT^{d+\ell})$
satisfying:
\begin{itemize}
   \item[$i)$] $\K_\infty,\W_\infty\in\cA(\TT^{d+\ell}_{\rho_\infty})^{2n}$ and
$\K_{\infty}(\TT^{d+\ell}_{\rho_\infty})\subset\U$.
\item [$ii)$] $\K_\infty, \W_\infty$, and the associated objects
      $\NO_\infty, \B_\infty,$ and $\S_\infty$ satisfy $H_2$ and
      $H_3$ in the complex strip $\TT^{d+\ell}_{\rho_\infty}$.
   \item [$iii)$] The dynamics on $\cK_\infty$ are conjugated to a rigid
      rotation with rotation vector $(\om,\al)$ and on
      $\cW_\infty$ to a constant contraction with rate
      $\abs{\la_\infty}$.
   \item [$iv)$] The limiting objects are close to the original ones in
      the following sense
      \begin{align} \label{eq:KAMK}
      \norm{\K_\infty-\K}_{\rho_\infty}&\leq\fracCDeK\E,\\\label{eq:KAMDteK}
      \norm{\W_\infty-\W}_{\rho_\infty}&\leq \frac{\fracCW}{\ga\de^\tau}\E,\\
      \abs{\la_\infty-\la}&\leq \fracCDela\E,\label{eq:KAMla}\\
      \abs{\invla_\infty-\invla}&\leq
      \fracCDeinvla\E\label{eq:KAMinvla},
      \end{align}
      for some explicit constants $\fracCDeK, \fracCW,
      \fracCDela$, and $\fracCDeinvla$.
\end{itemize}
\end{theorem}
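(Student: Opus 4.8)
The plan is to prove Theorem~\ref{thm:KAM} by establishing the convergence of the quasi-Newton scheme of Section~\ref{sec:quasi-Newton} in the scale of Banach spaces $\cA(\TT^{d+\ell}_{\rho_j})$ with $\rho_j\downarrow\rho_\infty$. Everything is organized around an \emph{iterative lemma}: assuming $(\K,\W,\la)$ satisfies $H_1$--$H_3$ on $\TT^{d+\ell}_\rho$ with invariance errors $\EK,\EW$ and that \eqref{eq:HKAM} holds, one builds on $\TT^{d+\ell}_{\rho-3\de}$ the new triple $(\barK,\barW,\barla)=(\K+\P\xiK,\,\W+\P\xiW,\,\la+\Dela)$, with $\xiK,\xiW,\Dela$ defined exactly as in \eqref{eq:xiK2}--\eqref{eq:xiK1} and \eqref{eq:xiW1}--\eqref{eq:xiW4}, and proves three statements: (a) the corrections are controlled, $\norm{\P\xiK}_{\rho-2\de}\lesssim\E$, $\norm{\P\xiW}_{\rho-2\de}\lesssim\ga^{-1}\de^{-\tau}\E$, $\abs{\Dela}\lesssim\E$ and $\abs{\barinvla-\invla}\lesssim\E$; (b) $(\barK,\barW,\barla)$ again satisfies $H_1$--$H_3$ on $\TT^{d+\ell}_{\rho-3\de}$, with $\sigmaW,\sigmaB,\sigmaNO,\sigmainvaverS$ and $\abs{\la}$ changing by at most $O(\ga^{-1}\de^{-\tau}\E)$; and (c) the new errors are quadratically small,
\[
   \Max{\frac{\norm{E_{\barK}}_{\rho-3\de}}{\ga^2\de^{2\tau}},\ \norm{E_{\barW}}_{\rho-3\de}}\ \le\ \frac{\fracC}{\ga^2\de^{2\tau}}\,\E^2 ,
\]
with $\fracC$ an explicit constant assembled from those in Appendix~\ref{ap:constants}. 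The $\ga^2\de^{2\tau}$--weight on $\norm{\EK}$ in \eqref{eq:defE} is exactly what puts the tangential correction --- which passes through two composed small-divisor (Rüssmann) solves, since in \eqref{eq:corrtr1}--\eqref{eq:corrtr4} two blocks have equal-modulus coefficients and the $\xiaK$-equation couples to $\xicK$, see \eqref{eq:xiK3} and \eqref{eq:xiK1} --- and the bundle correction --- which passes through only one, since in the system following \eqref{eq:coho-bundle} only the $\xibW$-equation has equal-modulus coefficients --- on the same footing; the nested solves together with the Cauchy estimates needed to recover the derivative bounds of $H_2$ are what fix the analyticity loss at three $\de$-cuts per step.

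\medskip
\noindent\textbf{The iterative step.}
The heart of the proof is the quantitative version of Section~\ref{sec:quasi-Newton}. One estimates $\etaK=\OmO\P^\ttop\comp\Romal\OmbarK\EK$ and $\etaW=\OmO\P^\ttop\comp\Romal\OmbarK\tildeEW$ from $H_1$--$H_2$ together with the bound $\norm{\tildeEW}_{\rho-2\de}\le\norm{\EW}_\rho+\cteDDzphiT\,\norm{\P\xiK}_{\rho-2\de}\,\norm{\W}_\rho$ coming from \eqref{eq:tildeEW}; then one bounds $\xiK$ and $\xiW$ by feeding the block systems \eqref{eq:corrtr1}--\eqref{eq:corrtr4} and their bundle analogues into the non-small-divisor estimate \eqref{eq:estR2} (for the hyperbolic blocks, and for the coupled system \eqref{eq:cohoA2coupled}--\eqref{eq:cohoA3coupled} required to build $\P$), the Rüssmann estimate \eqref{eq:estR1} (for the remaining small-divisor blocks), and hypothesis $H_3$ (for $\aver{\xicK}$ in \eqref{eq:averxiK3}), which yields (a). Writing $E_{\barK}=\EK+\big(\DzphiK\P\xiK-\P\comp\Romal\,\xiK\comp\Romal\big)+\int_0^1(1-s)\,\DDzphiT\comp(\K+s\P\xiK,\id)[\P\xiK,\P\xiK]\,ds$ and the analogue for $E_{\barW}$ (using \eqref{eq:tildeEW}), the new error splits into: (i) the genuinely quadratic Taylor remainders, of size $O(\norm{\P\xiK}^2)$ and $O(\norm{\P\xiK}\norm{\W})$; (ii) the defects arising because $-\OmO\P^\ttop\comp\Romal\OmbarK$ is only an approximate inverse of $\P\comp\Romal$, because the reduction \eqref{eq:reducibility}--\eqref{eq:defLas} holds only approximately, and because the torsion symmetry \eqref{eq:hSsym} and the exactness of $\phi_T$ hold only modulo $\EK,\EW$ --- these are precisely the quantities controlled by the approximate geometric lemmas of Section~\ref{sec:some lemmas}, linearly in $\E$ and \emph{without differentiating} $\EK$ or $\EW$, so that their products with the (once-differentiated) corrections are again quadratic; and (iii) the discarded average $\aver{\etacK}$ from \eqref{eq:corrtr3}, which by the quadratically-small-averages lemma of \cite{FMHM24} is $O(\E^2)$ too. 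Collecting these with the constants of Section~\ref{sec:tori bundle correction} gives (c), and tracking the same estimates through the definitions of $\NO,\B,\S$ and of the frame for $\barK$ gives (b).

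\medskip
\noindent\textbf{Iteration and passage to the limit.}
Fix $\rho_\infty$ and $\de$ as in the statement, set $\dej{j}:=\de\,2^{-(j+1)}$, so that $\sum_{j\ge0}3\dej{j}=3\de<\rho-\rho_\infty$, and put $\rho_0:=\rho$, $\rho_{j+1}:=\rho_j-3\dej{j}$, so $\rho_j\downarrow\rho_\infty$. Starting from $(\K_0,\W_0,\la_0):=(\K,\W,\la)$ and writing $\Ej{j}$ for the weighted error of the $j$-th iterate on $\TT^{d+\ell}_{\rho_j}$ (with $\dej{j}$ in place of $\de$), the iterative lemma gives $\Ej{j+1}\le\fracC\,\ga^{-2}\dej{j}^{-2\tau}\Ej{j}^2$; with $a_j:=\fracC\,\ga^{-2}\dej{j}^{-2\tau}\Ej{j}$ this reads $a_{j+1}\le 2^{2\tau}a_j^2$, so, after absorbing $2^{2\tau}$ into $\fracC$, condition \eqref{eq:HKAM} reads $a_0<1$ and forces $a_j\to0$ doubly exponentially, hence $\Ej{j}\to0$ super-exponentially; property (b) then makes every step admissible and keeps $\sigmaW,\sigmaB,\sigmaNO,\sigmainvaverS$ and $\abs{\la_j}<1$ uniformly bounded. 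By (a), $\sum_j\norm{\K_{j+1}-\K_j}_{\rho_\infty}$ and $\sum_j\norm{\W_{j+1}-\W_j}_{\rho_\infty}$ converge, so $\K_j\to\K_\infty$ and $\W_j\to\W_\infty$ in $\cA(\TT^{d+\ell}_{\rho_\infty})^{2n}$, $\la_j\to\la_\infty$ and $\invla_j\to\invla_\infty$; the uniform bounds give $\K_\infty(\TT^{d+\ell}_{\rho_\infty})\subset\U$, so all flow-map terms in \eqref{eq:defEK}--\eqref{eq:defEW} converge and $E_{\K_\infty}=E_{\W_\infty}=0$, i.e.\ \eqref{eq:invK}--\eqref{eq:invW}, which is $i)$. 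Then $ii)$ follows since the $H_2$--$H_3$ constants are controlled throughout the iteration and hence in the limit; $iii)$ is the content of the now-exact reduction \eqref{eq:reducibility}--\eqref{eq:defLas} together with \eqref{eq:unstablebundle}; and $iv)$ follows by telescoping the bounds in (a) --- e.g.\ $\norm{\K_\infty-\K}_{\rho_\infty}\le\sum_{j\ge0}\norm{\K_{j+1}-\K_j}_{\rho_\infty}\le\fracCDeK\E$ and $\norm{\W_\infty-\W}_{\rho_\infty}\le\frac{\fracCW}{\ga\de^\tau}\E$ --- after redefining $\fracCDeK,\fracCW,\fracCDela,\fracCDeinvla$ to absorb the convergent geometric sums $\sum_j\Ej{j}$ and $\sum_j\dej{j}^{-\tau}\Ej{j}$.

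\medskip
\noindent\textbf{Expected main obstacle.}
The hard part is not the Newton quadratic gain itself but making it survive the analyticity loss: each small-divisor solve costs a factor $\ga^{-1}\dej{j}^{-\tau}$, and $\dej{j}^{-\tau}$ blows up geometrically along the iteration, so the quadratic factor $\Ej{j}^2$ must beat it with room to spare. This works only because the approximate geometric lemmas of Section~\ref{sec:some lemmas} yield defect bounds that are genuinely first order in $\E$ and involve \emph{no derivative} of $\EK,\EW$ --- so no extra $\de$-cut is spent on them and the products with the differentiated corrections stay quadratic --- combined with the sharp accounting of exactly three $\de$-cuts per step and with the $\ga^2\de^{2\tau}$--weighting of $\norm{\EK}$ in \eqref{eq:defE} that keeps the two error components closing at the same rate. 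Accordingly, I expect the bulk of the technical work, carried out in Sections~\ref{sec:some lemmas}--\ref{sec:tori bundle correction}, to be precisely the derivation of these approximate geometric identities and of every explicit constant entering $\fracC$, since any computer-assisted application requires all of them in closed form.
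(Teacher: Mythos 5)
Your proposal follows essentially the same route as the paper: an iterative lemma (the paper's Lemma \ref{lemma:iter}) that produces controlled corrections, verifies that $H_2$--$H_3$ survive with the same $\sigma$-thresholds, and yields quadratic new errors via the approximate geometric lemmas of Section \ref{sec:some lemmas} and the correction Lemmas \ref{lemma:EKnew}--\ref{lemma:EWnew}, followed by a geometric choice of bites $\de_j$ and a doubly-exponential convergence argument identical in substance to Section \ref{sec:convergence}. One small inaccuracy in your heuristic: the geometric defect bounds do involve derivatives of $\EK$ (e.g.\ $\Dif_\te\EK$ enters $\EL$ and $\Lop\OmDK$) and hence cost Cauchy and R\"ussmann cuts, but these losses are exactly what the $\ga^2\de^{2\tau}$-weighting and the three-$\de$ budget per step absorb, so the scheme closes as you describe.
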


\begin{remark}
In hypothesis $H_2$, it is not strictly necessary to assume
bounds for $\NO$ since we can control them
automatically from the bounds on $\J,\L,$ and $\B$.
Nonetheless, we include it in the hypothesis for the
sake of flexibility in order apply the theorem to particular
problems.  
\end{remark}
\begin{remark}
Note that, since we use the matrix-valued map $\A$ to reduce the
torsion to the form \eqref{eq:reducedS}, we obtain a
parameterization $\widetilde\W_{\infty}$ of another invariant bundle
$\widetilde\cW_{\infty}$ satisfying \eqref{eq:unstablebundle}.
\end{remark}
\begin{remark}
The theorem applies to autonomous systems with minor
modifications---it only requires setting to zero some quantities. 
\end{remark}

\section{Error estimates of approximate geometric
properties}\label{sec:some lemmas}
For the proof of Theorem \ref{thm:KAM}, we first need to control 
certain geometric properties that only hold
approximately when the parameterizations of tori and bundles satisfy
their invariance equations approximately. In this section we
obtain upper bounds for these properties in terms of the error in
the invariance equations. 

\subsection{Adapted frames control}\label{sec:frames control}
Let us begin by controlling the adapted frame
we use throughout the iterative scheme. Let 
\begin{align}\label{eq:defCcX}
   \CcX&:=\cteX + \sigmaDvpK\abs{\hatal}, &
   \CcXT&:=\cteXT + \abs{\hatal^\ttop}\sigmaDvpKT,\\
   \label{eq:defCL}
   \CL&:=\sigmaDteK +   \CcX + \sigmaW, &
   \CLT&:=\Max{\sigmaDteKT,\CcXT,\sigmaWT}.
\end{align}
Then, 
\begin{align*}
   \norm{\cX}_\rho&<\CcX,  & \norm{\cXT}_\rho &<\CcXT,\\
   \norm{\L}_\rho &<\CL, & \norm{\LT}_\rho &<\CLT.
\end{align*}
If we define $\ChP:=\CL+\sigmaNO$ and $\ChPT:=\Max{\CLT,\ \sigmaNOT}$, we can control the frame $\hP$ as
\begin{equation}\label{eq:esthP}
   \norm{\hP}_{\rho}\leq\ChP,\qquad 
   \norm{\hP^\ttop}_\rho\leq\ChPT.
\end{equation}
Similarly, let $\ChS:=\sigmaNOT\cteOm\cteDzphiT\sigmaNO$
and $\ChST:=\sigmaNOT\cteOm\cteDzphiTT\sigmaNO$. Hence we have
the following estimates for $\hS$ and $\hS^\ttop$ 
\begin{equation}\label{eq:esthatS}
   \norm{\hS}_\rho<\ChS,\qquad
   \norm{\hS^\ttop}_\rho<\ChST.
\end{equation}
In order to control the frame $\P$, 
let us first obtain estimates for $\norm{\A}_\rho$  and
$\norm{\AT}_\rho$ as follows
\begin{align*}
   \norm{A}_\rho &\leq\Max{\norm{\Atwo}_\rho,\
      \norm{\Athree}_\rho+  \norm{\Afour}_\rho},\\
      \norm{\AT}_\rho&\leq\Max{\norm{\A_3^\ttop}_\rho,\
      \norm{\A_2^\ttop}_\rho + \norm{\Afour}_\rho}.
\end{align*}
Note that $\Atwo$ and $\Athree$ satisfy \eqref{eq:cohoA2coupled}
and \eqref{eq:cohoA3coupled}, respectively. Therefore, we can use
estimate \eqref{eq:estR2}  and obtain
\begin{align*}
   \norm{\Atwo}_\rho &\leq\frac{1}{\abs{\invla}-\abs{\la}}\left(
   \norm{\hS_2}_\rho\abs{\la} +
   \norm{\hS_3^\ttop}_\rho\abs{\invla}\right), & 
   \norm{\Athree}_\rho &\leq \frac{1}{\abs{\invla}-\abs{\la}}\left(
   \norm{\hS_3}_\rho + \norm{\hS_2^\ttop}_\rho\right),\\
   \norm{\Atwo^\ttop}_\rho &\leq\frac{1}{\abs{\invla}-\abs{\la}}\left(
   \norm{\hS_2^\ttop}_\rho\abs{\la} +
   \norm{\hS_3}_\rho\abs{\invla}\right), & 
   \norm{\Athree^\ttop}_\rho &\leq \frac{1}{\abs{\invla}-\abs{\la}}\left(
      \norm{\hS_3^\ttop}_\rho + \norm{\hS_2}_\rho\right).
\end{align*}
Also, $\Afour$ satisfies \eqref{eq:cohoA4} so
\begin{equation*}
   \norm{\Afour}_\rho\leq\frac{1}{\abs{\invla}-\abs{\la}}
   \norm{\hS_4}_\rho.
\end{equation*}
Then,
\begin{align*}
   \norm{\A}_\rho&\leq\frac{1}{\abs{\invla}-\abs{\la}}
   \Max{\norm{\hS_2}_\rho\abs{\la} +
   \norm{\hS_3^\ttop}_\rho\abs{\invla},\ 
\norm{\hS_3}_\rho + \norm{\hS_2^\ttop}_\rho + \norm{\hS_4}_\rho 
}\\
&\leq\frac{1}{\abs{\invla}-\abs{\la}}
\Max{\norm{\hS}_\rho + \norm{\hS_3^\ttop}_\rho\abs{\invla},\ \norm{\hS}_\rho
+ \norm{\hS_2^\ttop}_\rho\abs{\invla}}\\
&\leq\frac{1}{\abs{\invla}-\abs{\la}}\left(\norm{\hS}_\rho
   + \norm{\hS^\ttop}_\rho\abs{\invla}\right)\\
&\leq\frac{1}{1-\abs{\la}^2}\left(\norm{\hS}_\rho\abs{\la} +
\norm{\hS^\ttop}_\rho\right).
\end{align*}
Similarly, 
\begin{align*}
   \norm{\A^\ttop}_\rho&\leq\frac{1}{\abs{\invla}-\abs{\la}}
   \Max{\norm{\hS_3^\ttop}_\rho +  \norm{\hS_2}_\rho,\ 
      \norm{\hS_2^\ttop}_\rho\abs{\la} +
      \norm{\hS_3}_\rho\abs{\invla} + \norm{\hS_4}_\rho 
}\\
&\leq\frac{1}{\abs{\invla}-\abs{\la}}
\Max{\norm{\hS_3^\ttop}_\rho + \norm{\hS}_\rho\abs{\invla},\ 
   \norm{\hS_2^\ttop}_\rho + \norm{\hS}_\rho
\abs{\invla}}\\
&\leq\frac{1}{\abs{\invla}-\abs{\la}}\left(\norm{\hS^\ttop}_\rho
+ \norm{\hS}_\rho\abs{\invla}\right)\\
&\leq\frac{1}{1-\abs{\la}^2}\left(\norm{\hS^\ttop}_\rho\abs{\la}
   + \norm{\hS}_\rho\right).
\end{align*}
Finally we obtain
\begin{align}\label{eq:estA}
   \norm{A}_\rho &<\frac{1}{1-(\sigmala)^2}\left(\ChS\sigmala
   + \ChST\right)=:\CA,\\\label{eq:estAT}
      \norm{\AT}_\rho &<\frac{1}{1-(\sigmala)^2}
      \left(\ChST\sigmala +\ChS \right) =:\CAT.
\end{align}
We can now control $\N$ and $\NT$ as 
\begin{align}\label{eq:estN}
   \norm{\N}_\rho &\leq \norm{\NO}_\rho + \norm{\L}_\rho \norm{\A}_\rho
   <\sigmaNO +\CL\CA=:\CN,\\\label{eq:estNT}
   \norm{\NT}_\rho &\leq \norm{\NOT}_\rho + 
   \norm{\AT}_\rho\norm{L^\ttop}_\rho
   <\sigmaNOT + \CAT\CLT=:\CNT,
\end{align}
and the frame $\P$ as
\begin{align}\label{eq:estP}
   \norm{\P}_\rho&
   <\CL+\CN=:\CP,\\\label{eq:estPT}
   \norm{\PT}_\rho&
   <\Max{\CLT,\ \CNT}=:\CPT.
\end{align}

\subsection{Approximate invariance of the subframe $\L$}\label{sec:invL}
Let $\EL:\TT^{d+\ell}\to\RR^{2n\times n}$ denote the error in the
invariance of the subframe $\L$. That is, 
\begin{equation}\label{eq:defEL}
   \EL:=\DzphiK \L - \L\comp\Romal\La.
\end{equation}
Also, let us define
\begin{align*}
\De X:=&X\comp\big(\phiT\comp(\K,\id),\Ral\big)
-X\comp\left(\K\comp\Romal,\Ral\right)
\\
=& \int_0^1\Dif_z X\comp\big(\K\comp\Romal +
s\EK,\Ral\big)\EK ds,
\end{align*}
where the last equality only holds in $\tU$
for sufficiently small $\EK$. More specifically, suppose we want to obtain upper
bounds for $\norm{\De X}_{\rho-\de}$. Then, we need that
for all $(\angles)\in\TT^d_{\rho-\de}\times\TT^\ell_{\rho-\de}$
\[
\left(K(\barangles) + s\EK(\angles),\bar \vp\right)\in\tU.
\]
Equivalently, for all $(\angles)\in\TT^d_{\rho-\de}\times\TT^\ell_{\rho-\de}$
and all $s\in[0,1]$, we need some
$\te^*\in\TT^d_{\rho-\de}$ such that 
\[
   \Abs{K(\barangles) + s\EK(\angles) -
   \KO(\te^*,\bar\vp)}<R.
\]
Observe that if 
\begin{equation}\label{eq:ass_EK}
   \frac{\norm{\EK}_\rho}{R-\norm{\K-\KO}_\rho}<1
\end{equation}
holds, we can take $\te^*=\bar\te$ and,
consequently, $\De X$ is well-defined in
$\TT^d_{\rho-\de}\times\TT^\ell_{\rho-\de}$.  
We will encounter the need of hypothesis \eqref{eq:ass_EK} in
order to guarantee that analogous norms for various objects are well
defined. Therefore, in what follows, we assume \eqref{eq:ass_EK}
holds.

\begin{lemma}\label{lemma:invL}
The error in the invariance of the subframe $\L$ 
is given by
\begin{equation}\label{eq:EL}
   \EL = \left(\Dif_\te \EK\ \big|\  \EcX
   \ \big|\  \EW \right),
\end{equation}
where 
\begin{equation*}
   \EcX:=\De X
      -\Dif_\vp \EK \hatal.
\end{equation*}
Additionally, for any $\de\in(0,\rho)$
\begin{align}\label{eq:estEL}
   \norm{\EL}_{\rho-\de}&\leq \frac{\CELK}{\de}\norm{\EK}_{\rho} +
   \CELW \norm{\EW}_{\rho},\\
   \norm{\EL^\ttop}_{\rho-\de}&\leq \frac{\CELTK}{\de}\norm{\EK}_{\rho} +
   \CELTW \norm{\EW}_{\rho}.\label{eq:estELT}
\end{align}

\end{lemma}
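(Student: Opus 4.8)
The plan is to establish the identity \eqref{eq:EL} by a block-column computation of $\EL=\DzphiK\L-\L\comp\Romal\La$, and then to read off \eqref{eq:estEL} and \eqref{eq:estELT} from Cauchy estimates together with the integral representation of $\De X$.

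\emph{Step 1: the identity.} Writing $\L=\big(\DteK\ \big|\ \cX\ \big|\ \W\big)$ and $\La=\mathrm{diag}(I_{n-1},\la)$, the product $\L\comp\Romal\La$ multiplies the first $n-1$ columns of $\L\comp\Romal$ by the identity and the last one by $\la$, so $\EL=\big(\DzphiK\DteK-\DteK\comp\Romal\ \big|\ \DzphiK\cX-\cX\comp\Romal\ \big|\ \DzphiK\W-\W\comp\Romal\la\big)$ and it suffices to identify the three blocks. The third block is $\EW$ by definition \eqref{eq:defEW}. For the first block, differentiating $\EK=\phiK-\K\comp\Romal$ in $\te$ and using the chain rule gives $\Dif_\te[\phiK]=\DzphiK\DteK$ (no term from $\Dif_\vp\phi_T$ enters, since $\te$ appears only through $\K$) and $\Dif_\te[\K\comp\Romal]=\DteK\comp\Romal$ (the $\te$-Jacobian of $\Romal$ is the identity), hence $\Dif_\te\EK=\DzphiK\DteK-\DteK\comp\Romal$, which is the first block. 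For the second block I would use that the suspended vector field $\tilde X$ is invariant under its own flow, i.e. $\Dif\tphi_t\,\tilde X=\tilde X\comp\tphi_t$, whose top component reads $\Dif_z\phi_t(z,\vp)\,X(z,\vp)+\Dif_\vp\phi_t(z,\vp)\,\hatal=X\big(\phi_t(z,\vp),\vp+\hatal t\big)$. Evaluating this at $t=T$ and $(z,\vp)=(\K(\angles),\vp)$, where $\vp+\hatal T=\vp+\al$, and combining it with the chain-rule relation $\Dif_\vp[\phiK]=\DzphiK\DvpK+(\Dif_\vp\phi_T)\comp(\K,\id)$, the two occurrences of $(\Dif_\vp\phi_T)\comp(\K,\id)\,\hatal$ cancel and one gets $\DzphiK\cX=X\comp(\phiK,\Ral)-\Dif_\vp[\phiK]\,\hatal$. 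Since likewise $\cX\comp\Romal=X\comp(\K\comp\Romal,\Ral)-\Dif_\vp[\K\comp\Romal]\,\hatal$, subtracting yields $\DzphiK\cX-\cX\comp\Romal=\De X-\Dif_\vp\EK\,\hatal=\EcX$, where $\De X=X\comp(\phiK,\Ral)-X\comp(\K\comp\Romal,\Ral)$ as in the paragraph preceding the lemma. This proves \eqref{eq:EL}; recall that $\De X$ is well defined on $\TT^d_{\rho-\de}\times\TT^\ell_{\rho-\de}$ thanks to the standing hypothesis \eqref{eq:ass_EK}.

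\emph{Step 2: the estimates.} Since the norm of a horizontal concatenation of matrix-valued maps is at most the sum of the norms of the blocks, $\norm{\EL}_{\rho-\de}\le\norm{\Dif_\te\EK}_{\rho-\de}+\norm{\EcX}_{\rho-\de}+\norm{\EW}_{\rho-\de}$. The first summand is $\le\frac{d}{\de}\norm{\EK}_\rho$ by the Cauchy estimate in $\te$. For the middle summand, $\norm{\EcX}_{\rho-\de}\le\norm{\De X}_{\rho-\de}+\frac{\ell}{\de}\abs{\hatal}\,\norm{\EK}_\rho$, and from $\De X=\int_0^1\Dif_z X\comp(\K\comp\Romal+s\EK,\Ral)\,\EK\,ds$ and $\norm{\Dif_z X}_\tU\le\cteDzX$ we get $\norm{\De X}_{\rho-\de}\le\cteDzX\norm{\EK}_\rho$. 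The last summand is bounded by $\norm{\EW}_\rho$. Collecting terms, and rewriting the $\de$-independent contribution $\cteDzX\norm{\EK}_\rho$ as $\tfrac{\rho\cteDzX}{\de}\norm{\EK}_\rho$ using $\de<\rho$, gives \eqref{eq:estEL} with $\CELK=d+\ell\abs{\hatal}+\rho\cteDzX$ and $\CELW=1$ (any uniform upper bound on $\de$ in place of $\rho$ would do). For \eqref{eq:estELT} one argues identically with $\EL^\ttop$, now the vertical concatenation of $(\Dif_\te\EK)^\ttop$, $\EcX^\ttop$, $\EW^\ttop$, so that its norm is the maximum of the three block norms; one uses the transposed Cauchy estimates (which change the index over which the maximum is taken) and $\norm{(\Dif_z X)^\ttop}_\tU\le\cteDzXT$.

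\emph{Main obstacle.} The one genuinely non-mechanical point is the second block, i.e. the identity $\DzphiK\cX-\cX\comp\Romal=\EcX$: it rests on keeping the total $\vp$-derivative of $\phi_T\comp(\K,\id)$ carefully separated from the partial derivative $(\Dif_\vp\phi_T)\comp(\K,\id)$ and on the exact cancellation coming from the invariance of $\tilde X$ under its flow, and it is precisely here that the specific shape $\cX=X\comp(\K,\id)-\DvpK\hatal$ of the third column of $\L$ is needed. Everything else is bookkeeping: Cauchy estimates, the triangle inequality, the integral form of $\De X$, and the mild point of absorbing the $\de$-independent term into $\CELK$.
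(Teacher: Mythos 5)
Your proof is correct and follows essentially the same route as the paper's: the first and third blocks are read off from $\Dif_\te\EK$ and the definition of $\EW$, and the middle block is obtained from the push-forward identity $\Dif_z\phi_T\,X+\Dif_\vp\phi_T\,\hatal=X\comp\tphi_T$ combined with the total $\vp$-derivative of $\EK$, exactly as in the paper (which derives that identity by differentiating $\tphi_T\comp\tphi_t$ at $t=0$). The estimates also match, the only cosmetic difference being that you absorb the $\de$-independent term $\cteDzX\norm{\EK}_\rho$ using $\de<\rho$, whereas the paper keeps a factor $\de$ inside the constant $\CEcX=\cteDzX\de+\ell\abs{\hatal}$.
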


\begin{proof}
Let us first differentiate \eqref{eq:defEK} with respect to both $\te$ and $\vp$ to
obtain 
\begin{align}\label{eq:dthEK}
   \Dif_\te \EK&=\DzphiK \Dif_\te \K - \Dif_\te
   \K\comp\Romal,\\
   \Dif_\vp \EK&=\DzphiK\Dif_\vp \K + \Dif_\vp\phiK -
   \Dif_\vp \K\comp\Romal\label{eq:dvpEK}.
\end{align}
The first term in \eqref{eq:EL} is immediate from \eqref{eq:dthEK}.
Also, the last term in \eqref{eq:EL} is precisely definition
\eqref{eq:defEW}.
For the second term, we need to show that 
\[
\DzphiK\cX-\cX\comp\Romal=\EcX.
\]
First observe that at $t=0$
\begin{equation*}
   \frac{\pd}{\pd t}\left(\tilde\phi_T\comp\tilde\phi_t(z,\vp)\right) =
   \begin{pmatrix}
   \Dif_z\phi_T(z,\vp) & \Dif_\vp\phi_T(z,\vp)\\
   \cO_{\ell\times2n} & I_{\ell}
   \end{pmatrix} 
   \begin{pmatrix}  X(x,\vp) 
      \\ \hatal \end{pmatrix} = 
      \begin{pmatrix} X\big(\phi_T(z,\vp),\bar\vp \big) \\ \hatal
      \end{pmatrix}.
\end{equation*}
From the first block row, at $z=K(\angles)$, we
obtain
\[
\DzphiT\big(\K(\angles),\vp\big) X\big(\K(\angles),\vp\big)
  + \Dif_\vp\phiT\big(\K(\angles),\vp\big)\hatal =
X\Big(\phiT\big(\K(\angles),\vp\big),\bar\vp\Big).
\]
Then, after using \eqref{eq:dvpEK} and some manipulation, we
finally obtain
\[
   \DzphiK \cX - \cX\comp\Romal = \De X - \Dif_\vp
   \EK\hatal.
\]
For the estimate \eqref{eq:estEL}, note that for any $\delta\in(0,\rho)$
\begin{equation*}
   \norm{\EcX}_{\rho-\de}\leq \norm{\De X}_{\rho-\de} +
   \norm{\Dif_\vp\EK}_{\rho-\de}\abs{\hatal}
\end{equation*}
and from assumption \eqref{eq:ass_EK}, $\De X$ is well-defined.
Then, we can obtain the estimates
\begin{align}\label{eq:estEcX}
   \norm{\EcX}_{\rho-\de}&\leq \left(\cteDzX \de +
   \ell\abs{\hatal}\right)\frac{\norm{\EK}_\rho}{\de}\leq\CEcX
   \frac{\norm{\EK}_\rho}{\de},\\\label{eq:estEcXT}
     \norm{\EcX^\ttop}_{\rho-\de}&\leq \norm{\left(\De X\right)^\ttop}_{\rho-\de} +
     \Abs{\hatal^\ttop}\norm{\left(\Dif_\vp\EK\right)^\ttop}_{\rho-\de}\\
     \nonumber
    &\leq2n \left(\cteDzXT \de +\Abs{\hatal^\ttop}\right)
    \frac{\norm{\EK}_\rho}{\de}\leq\CEcXT
    \frac{\norm{\EK}_\rho}{\de}.
    \end{align}
Lastly, 
\begin{align*}
   \norm{\EL}_{\rho-\de}&\leq\norm{\Dif_\te \EK}_{\rho-\de} +
   \norm{\EcX}_{\rho-\de} +\norm{\EW}_{\rho-\de}\\
                        &\leq \frac{d}{\de} \norm{\EK}_\rho+
                        \frac{\CEcX}{\de}\norm{\EK}_{\rho}
                         + \norm{\EW}_{\rho}\\
                        &\leq \frac{\left(d+\CEcX
                        \right)}{\de}\norm{\EK}_\rho +
                        \norm{\EW}_{\rho}\\
                        &\leq\frac{\CELK}{\de}\norm{\EK}_\rho
                        + \CELW\norm{\EW}_{\rho},\\
   \norm{\EL^\ttop}_{\rho-\de}&\leq\Max{\norm{\left(\Dif_\te
      \EK\right)^\ttop}_{\rho-\de},\ \norm{\EcX^\ttop}_{\rho-\de},\
   \norm{\EW^\ttop}_{\rho-\de}}\\ 
                              &\leq
                              \Max{\frac{2n}{\de}\norm{\EK}_\rho,\
                              \frac{\CEcXT}{\de}\norm{\EK}_\rho,\
                           2n\norm{\EW}_\rho}\\   
                              &\leq \Max{2n,\
                              \CEcXT}\frac{\norm{\EK}_\rho}{\de} +
                              2n\norm{\EW}_\rho\\
                              &\leq\frac{\CELTK}{\de}\norm{\EK}_\rho
                              + \CELTW\norm{\EW}_\rho.
\end{align*}
\end{proof}

\subsection{Approximate isotropy of $\cK$}
For each $\vp\in\TT^d$, let us denote the coordinate
representation of the pullback of
the symplectic form $\bOm$ by $\K$, as
$\OmDK:\TT^{d+\ell}\to\RR^{2n\times2n}$ given by
\[
   \OmDK:=(\Dif_\theta \K)^\ttop
   \OmK\Dif_\theta \K.
\]
Let us also define 
\begin{align}\label{eq:defDeOm}
   \DeOm:=& \Om\comp\phiT\comp(\K,\id) -
\Om\comp\K\comp\Romal\\\nonumber
=& \int_0^1 \Dif \Om\comp(\K\comp\Romal + s\EK)
\EK ds,
\end{align}
where, similarly as in Section \ref{sec:invL}, the last equality
only holds in $\U$ for sufficiently small $\EK$.
\begin{lemma}\label{lemma:ap_iso}
The approximate isotropy of $\cK$ is given by
$\OmDK = \Rop \left(\Lop\OmDK\right)$,
where 
\begin{align}\label{eq:LieOmK}
   \Lop\OmDK &= \OmDK - \OmDK\comp\Romal\\ \nonumber
   &= (\Dif_\theta\K\comp\Romal)^\ttop\DeOm\Dif_\theta
   \K\comp\Romal + (\Dif_\theta \K\comp\Romal)^\ttop
   \OmphiK \Dif_\te
   \EK\\
   &\phantom{=} +  (\Dif_\te \EK)^\ttop \OmphiK
   \DzphiK \Dif_\te \K\nonumber.
\end{align}
Additionally, for any $\de\in(0,\rho)$
\begin{equation}\label{eq:estOmDK}
   \norm{\OmDK}_{\rho-\de}\leq \frac{\COmDK}{\ga\de^{\tau+1}}
   \norm{\EK}_{\rho}.
\end{equation}
\end{lemma}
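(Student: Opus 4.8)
The plan is to establish first the identity $\Lop\OmDK = \OmDK - \OmDK\comp\Romal$ by a direct computation, and then derive the formula for $\Lop\OmDK$ in terms of $\DeOm$ and $\Dif_\te\EK$. For the first part, since $\OmDK = (\DteK)^\ttop\,\OmK\,\DteK$, we have
\[
   \OmDK\comp\Romal = (\DtebarK)^\ttop\,(\Om\comp\K\comp\Romal)\,\DtebarK,
\]
so the difference $\OmDK - \OmDK\comp\Romal$ has zero average only after we know $\cK$ is approximately isotropic; but what we really need is that $\OmDK$ itself is the zero-average solution of the small-divisors cohomological equation with right-hand side $\Lop\OmDK$. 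Here the key structural fact is that when $\EK=0$ the torus is exactly isotropic, $\OmDK=0$, hence $\aver{\OmDK}=0$ always holds (this is the standard argument that the cohomology class of the pullback of an exact form vanishes; concretely $\OmDK$ is the pullback of the exact form $\bOm$, and the average of a coboundary is zero). Therefore $\OmDK = \Rop(\Lop\OmDK)$ follows from the definition of $\Rop$ in Section \ref{sec:cohosmall}, provided $\Lop\OmDK = \Lop(\OmDK)$, i.e.\ that $\OmDK - \OmDK\comp\Romal$ equals the claimed expression.

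To get the explicit formula for $\Lop\OmDK = \OmDK - \OmDK\comp\Romal$, I would write $\DzphiK\,\DteK = \DtebarK + \Dif_\te\EK$ by differentiating \eqref{eq:defEK} with respect to $\te$ (this is exactly \eqref{eq:dthEK}), and substitute into
\[
   \OmDK\comp\Romal = (\DtebarK)^\ttop (\Om\comp\K\comp\Romal)\, \DtebarK.
\]
Replacing $\DtebarK$ by $\DzphiK\,\DteK - \Dif_\te\EK$, and using the symplecticity of $\phi_T$ in the form $(\DzphiK)^\ttop\,\OmbarK\,\DzphiK = \OmK$ together with $\OmbarK = \OmphiK - \DeOm$ (which is the definition \eqref{eq:defDeOm} rearranged), the main term $(\DzphiK\DteK)^\ttop\OmbarK(\DzphiK\DteK)$ collapses to $(\DteK)^\ttop\OmK\,\DteK = \OmDK$. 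The remaining cross terms and the $\DeOm$ correction term, after expanding $\OmbarK\DzphiK = (\OmphiK - \DeOm)\DzphiK$ and collecting, produce exactly the three terms displayed in \eqref{eq:LieOmK}: the $\DeOm$-term sandwiched between two copies of $\DtebarK$, and the two cross terms each linear in $\Dif_\te\EK$. This is bookkeeping; I expect a little care is needed to route each leftover term to the correct one of the three displayed summands, using that $\Om$ is skew so $\Om^\ttop = -\Om$ when combining the two cross terms.

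For the estimate \eqref{eq:estOmDK}, I would bound $\Lop\OmDK$ in the strip $\TT^{d+\ell}_{\rho-\delta/2}$ using the three-term formula: the $\DeOm$-term is controlled by $\norm{\DtebarK}_\rho^2\,\norm{\DeOm}_{\rho}$ with $\norm{\DeOm}_\rho \le \cteDOm\,\norm{\EK}_\rho$ from \eqref{eq:defDeOm} (valid once \eqref{eq:ass_EK} is assumed, as in the text), and the two cross terms are controlled by $\cteOm$, $\cteDzphiT$, $\norm{\DteK}_\rho$, and $\norm{\Dif_\te\EK}_{\rho-\delta/2}$, where the last uses the Cauchy estimate $\norm{\Dif_\te\EK}_{\rho-\delta/2} \le (2d/\delta)\,\norm{\EK}_\rho$. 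This gives $\norm{\Lop\OmDK}_{\rho-\delta/2} \le (C/\delta)\,\norm{\EK}_\rho$ for an explicit $C$ built from $\sigmaDteK$, $\sigmaDteKT$, $\cteOm$, $\cteDOm$, $\cteDzphiT$, $d$. Then applying the R\"ussmann estimate \eqref{eq:estR1} with the remaining loss $\delta/2$ yields
\[
   \norm{\OmDK}_{\rho-\delta} = \norm{\Rop(\Lop\OmDK)}_{\rho-\delta}
   \le \frac{\CR}{\ga(\delta/2)^\tau}\,\norm{\Lop\OmDK}_{\rho-\delta/2}
   \le \frac{\COmDK}{\ga\,\delta^{\tau+1}}\,\norm{\EK}_\rho,
\]
absorbing the factor $2^{\tau+1}$ and the constant $C$ into $\COmDK$. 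The main obstacle is not any single estimate but the algebraic identity \eqref{eq:LieOmK}: keeping track of the skew-symmetry of $\Om$ and the precise placement of $\phiK$ versus $\K\comp\Romal$ in each composition, so that the cancellation of the quadratic-in-$\DzphiK$ term is exact and the residual is genuinely first order in the errors.
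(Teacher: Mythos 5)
Your plan is correct and follows essentially the same route as the paper: expand $\OmDK$ (equivalently $\OmDK\comp\Romal$) using $\DzphiK\,\DteK=\DtebarK+\Dif_\te\EK$ and the symplecticity of $\phi_T$, justify $\aver{\OmDK}=0$ from the exactness of $\bOm$ (each entry of $\OmDK$ is a combination of $\te$-derivatives, hence has zero average), and combine a Cauchy estimate on $\Dif_\te\EK$ with the R\"ussmann estimate \eqref{eq:estR1}; your $\de/2+\de/2$ split of the analyticity loss even matches the stated domain $\TT^{d+\ell}_{\rho-\de}$ more literally than the paper's own proof, which lands on $\rho-2\de$. Two small slips to fix in the write-up: the symplecticity identity is $(\DzphiK)^\ttop\,\OmphiK\,\DzphiK=\OmK$ (with $\OmphiK$, not $\OmbarK$, in the middle --- otherwise the $\DeOm$ term in \eqref{eq:LieOmK} would not survive), and the clause ``$\OmDK=0$ when $\EK=0$, hence $\aver{\OmDK}=0$ always'' is a non sequitur, although your parenthetical justification (the coefficients of the exact form $\dif(\K^*\bal)$ have zero average) is exactly the computation the paper carries out.
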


\begin{proof}
From the symplecticity of $\phi_T$ and \eqref{eq:dthEK}
\begin{align*}
   \OmDK=&(\Dif_\theta \K)^\ttop \big(\DzphiK\big)^\ttop
   \OmphiK\DzphiK\Dif_\te \K\\
   =& \Big(\Dif_\theta \K\comp\Romal + \Dif_\te \EK \Big)^\ttop
   \OmphiK \Big(\Dif_\theta \K\comp\Romal + \Dif_\te \EK
   \Big) \\
   =&(\Dif_\te \K\comp\Romal)^\ttop \OmphiK \Dif_\theta
   \K\comp\Romal
   + (\Dif_\te \K\comp\Romal)^\ttop \OmphiK
   \Dif_\te \EK\\
   \phantom{=}& + (\Dif_\te \EK)^\ttop \OmphiK
   \DzphiK \Dif_\te \K.
\end{align*}
Then, using the previous expression, the definition of
$\Lop$, and $\eqref{eq:defDeOm}$ we obtain \eqref{eq:LieOmK}.
We also need to show that $\aver{\OmDK}=0$. Observe that 
since $\Om=(\Dif a)^\ttop-\Dif a$
\begin{equation*}
\OmDK=\big(\Dif_\te(a\comp\K)\big)^\ttop \Dif_\te \K -
\big(\Dif_\te\K\big)^\ttop
\Dif_\te (a\comp\K)
\end{equation*}
and each entry $ij$ of $\OmDK$ reads
\begin{align*}
   (\OmDK)_{ij}=&\sum_{k}\partial_{\te^i}\big(a_k\comp\K)\big)
   \partial_{\te^j} \K^k - \partial_{\te^j}\big(a_k\comp\K\big)
   \partial_{\te^i} \K^k\\
   =&\sum_{k} \partial_{\te^i}\big(a_k\comp\K 
   \partial_{\te^j} \K^k\big) - a_k\comp\K
   \partial^2_{\te^i\te^j}\K^k \\
\phantom{=}& - \partial_{\te^j}\big(a_k\comp\K \partial_{\te^i}
   \K^k\big) + a_k\comp\K \partial^2_{\te^i\te^j}\K^k\\
=& \sum_{k} \partial_{\te^i} \big(a_k\comp\K\partial_{\te^j}
   \K^k\big) - \partial_{\te^j}\big(a_k\comp\K \partial_{\te^i}
      \K^k\big).
\end{align*}
Then, $\aver{\OmDK}=0$ since we are taking averages of
derivatives with respect to $\te$. Therefore, $\OmDK =\Rop\left(\Lop\OmDK\right)$.
For the estimate \eqref{eq:estOmDK}, note first that---similarly
as in Lemma \ref{lemma:invL}---$\DeOm$ is
well-defined by assumption \eqref{eq:ass_EK}. Then, we can obtain the
following estimate from \eqref{eq:LieOmK}
\begin{align}\label{eq:estLieOmK}
   \norm{\Lop\OmDK}_{\rho-\de}&\leq\Big(\sigmaDteKT\cteDOm\sigmaDteK\de
   + \sigmaDteKT\cteOm d + 2 n
\cteOm\cteDzphiT\sigmaDteK\Big)\frac{\norm{\EK}_{\rho}}{\de}\\\nonumber
                                       &\leq \CLieOmDK
                                       \frac{\norm{\EK}_{\rho}}{\de},
\end{align}
and, using the Rüssmann estimate \eqref{eq:estR1}, we obtain
\begin{equation*}
   \norm{\OmDK}_{\rho-2\de}\leq
\frac{\CR\CLieOmDK}{\ga\de^{\tau+1}}\norm{\EK}_{\rho}\leq
\frac{\COmDK}{\ga\de^{\tau+1}}\norm{\EK}_{\rho}.
\end{equation*}

\end{proof}

\subsection{Approximate Lagrangianity of the subframe $L$}
The Lagrangianity of the subframe $L$ is given by
$\OmL:\TT^{d+\ell}\to\RR^{n\times n}$ constructed as
\begin{equation*}
   \OmL:=L^\ttop \OmK L
                  =\begin{pmatrix}
                   \OmDK & \OmDKcX & \OmDKW \\
                   \OmcXDK & \OmcX & \OmcXW \\
                    \OmWDK & \OmWcX & \OmW
                    \end{pmatrix},
\end{equation*}
where
\begin{align*}
   \OmDK&\phantom{:}=(\Dif_\theta \K)^\ttop
   \OmK\Dif_\theta \K, & 
   \OmDKcX&:= (\Dif_\te \K)^\ttop\OmK\cX, & 
   \OmDKW&:= (\Dif_\te \K)^\ttop\OmK \W, \\ 
   \OmcXDK&:= \cX^\ttop \OmK \Dif_\te \K, & 
   \OmcX&:= \cX^\ttop \OmK \cX, & 
   \OmcXW&:= \cX^\ttop \OmK \W, \\
   \OmWDK&:= \W^\ttop \OmK \Dif_\te \K, &
   \OmWcX&:= \W^\ttop \OmK \cX, & 
   \OmW&:= \W^\ttop \OmK \W. 
\end{align*}
Observe that because $\Om$ is skew-symmetric, we have 
$\OmcXDK=-\OmDKcX^\ttop$, $\OmWDK=-\OmDKW^\ttop$, and
$\OmWcX=-\OmcXW^\ttop$. Furthermore, 
that using the symplecticity of $\phi_T$, \eqref{eq:defEL}, and
definition \eqref{eq:defDeOm}, we obtain that $\OmL$ is a solution of
the following equation
\begin{align}\label{eq:eq_OmL}
   \OmL- \LaT\OmL\comp\Romal\La =&
\LaT\L^\ttop\comp\Romal \DeOm\L\comp\Romal\La\\\nonumber
&+ \LaT\L^\ttop\comp\Romal\OmphiK\EL\\\nonumber
&+\EL^\ttop\OmphiK\DzphiK\L.
\end{align}

\begin{lemma}\label{lemma:ap_Lag}
The approximate Lagrangianity of the frame $L$,
is given by
\begin{align*}
   \OmDK &=\Rop\big(\Lop \OmDK\big), & 
   \OmDKcX &= \Rop\big(\Lop \OmDKcX\big), & 
   \OmDKW &= \Rop^{1\la}\big( \Lop^{1\la}
   \OmDKW\big),\\
   \OmcXDK &=\Rop\big(\Lop\OmcXDK\big), &
   \OmcX&=0, & 
   \OmcXW&=\Rop^{1\la}\big(\Lop^{1\la}
   \OmcXW\big),\\
   \OmWDK&=\Rop^{1\la} \big(\Lop^{1\la}
   \OmWDK\big), & 
   \OmWcX&=\Rop^{1\la} \big(\Lop^{1\la}
   \OmWcX\big), & 
   \OmW&=0,
\end{align*}
where
\begin{align*}
   \Lop \OmDKcX &=(\Dif_\te \K\comp\Romal)^\ttop
   \DeOm\cX\comp\Romal + (\Dif_\te \K\comp\Romal)^\ttop\OmphiK \EcX\\
   &\phantom{=} + \Dif_\te\EK^\ttop\OmphiK \DzphiK\cX,\\
   \Lop^{1\la}\OmDKW &=  (\Dif_\te \K\comp\Romal)^\ttop
   \DeOm \W\comp\Romal\la + (\Dif_\te\K\comp\Romal)^\ttop\OmphiK \EW \\
   &\phantom{=} + \Dif_\te \EK^\ttop\OmphiK \DzphiK \W,\\
   \Lop\OmcXDK    &=\cX^\ttop\comp\Romal\DeOm\Dif_\te
   \K\comp\Romal + \cX^\ttop\comp\Romal\OmphiK \Dif_\te \EK \\
   &\phantom{=} + \EcX^\ttop\OmphiK \DzphiK \Dif_\te \K,\\
   \Lop^{1\la}\OmcXW &=\cX^\ttop\comp\Romal
   \DeOm \W\comp\Romal\la + \cX^\ttop\comp\Romal\OmphiK \EW \\
   &\phantom{=} + \EcX^\ttop\OmphiK \DzphiK \W,\\
   \Lop^{1\la}\OmWDK &=
   \la\W^\ttop\comp\Romal\DeOm\Dif_\te \K\comp\Romal +
   \la\W^\ttop\comp\Romal \OmphiK \Dif_\te \EK \\
   &\phantom{=} + \EW^\ttop\OmphiK  \DzphiK \Dif_\te \K,\\
   \Lop^{1\la}\OmWcX &=
   \la\W^\ttop\comp\Romal\DeOm\cX\comp\Romal +
   \la\W^\ttop\comp\Romal \OmphiK  \EcX\\
   &\phantom{=}+ \EW^\ttop\OmphiK \DzphiK\cX.
\end{align*}

Additionally, for any $\de\in(0,\rho/2)$
\begin{equation}\label{eq:estOmL}
   \norm{\OmL}_{\rho-2\de}\leq
\frac{\COmLK}{\ga\de^{\tau+1}}\norm{\EK}_{\rho} + \COmLW
\norm{\EW}_{\rho}.
\end{equation} 
\end{lemma}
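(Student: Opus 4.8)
The plan is to read off Lemma~\ref{lemma:ap_Lag} from the matrix cohomological equation \eqref{eq:eq_OmL} by projecting onto blocks. Writing $\L=(\Dif_\te\K\,|\,\cX\,|\,\W)$ and, by Lemma~\ref{lemma:invL}, $\EL=(\Dif_\te\EK\,|\,\EcX\,|\,\EW)$, and recalling that $\La=\mathrm{diag}(I_{n-1},\la)$ acts as the identity on the $\Dif_\te\K$ and $\cX$ slots and as $\la$ on the $\W$ slot, equation \eqref{eq:eq_OmL} splits into one cohomological equation per block. The two diagonal blocks $\OmcX=\cX^\ttop\OmK\cX$ and $\OmW=\W^\ttop\OmK\W$ vanish identically by skew-symmetry of $\Om$, and the $(\Dif_\te\K,\Dif_\te\K)$ block is exactly Lemma~\ref{lemma:ap_iso}. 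Each block coupling a $\W$-slot to a $\Dif_\te\K$- or $\cX$-slot picks up a single factor of $\la$, hence satisfies a non-small-divisor equation governed by $\Lop^{1\la}$; collecting the three terms on the right-hand side of \eqref{eq:eq_OmL} in the appropriate slot produces the stated formulas for $\Lop^{1\la}\OmDKW$, $\Lop^{1\la}\OmcXW$, $\Lop^{1\la}\OmWDK$, $\Lop^{1\la}\OmWcX$, and likewise $\Lop\OmDKcX$ (with $\OmcXDK=-\OmDKcX^\ttop$, using skew-symmetry of $\OmL$) for the remaining small-divisor block.

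To invert the small-divisor blocks one must check that the relevant averages vanish. For $\OmDK$ this is Lemma~\ref{lemma:ap_iso}. For $\OmDKcX$ I would split $\cX=X\comp(\K,\id)-\Dif_\vp\K\,\hatal$: since $\OmK\,X\comp(\K,\id)=\Dif_z H\comp(\K,\id)^\ttop$, the first contribution to $(\Dif_\te\K)^\ttop\OmK\cX$ equals $\big(\Dif_\te(H\comp(\K,\id))\big)^\ttop$, a $\te$-gradient, hence of zero average; for the second contribution, writing $\Om=(\Dif a)^\ttop-\Dif a$ as in the proof of Lemma~\ref{lemma:ap_iso}, each entry of $(\Dif_\te\K)^\ttop\OmK\Dif_\vp\K$ is a sum of a $\te$-derivative and a $\vp$-derivative of periodic functions, so it too averages to zero. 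Thus $\aver{\OmDKcX}=0$ and $\aver{\OmcXDK}=0$, and the identities $\OmDKcX=\Rop(\Lop\OmDKcX)$, $\OmcXDK=\Rop(\Lop\OmcXDK)$ follow. For the $\Lop^{1\la}$ blocks no average condition is needed: since $\abs{1}\neq\abs{\la}$, the operator $\Rop^{1\la}$ of Section~\ref{sec:cohoNsmall} inverts $\Lop^{1\la}$ entrywise on an arbitrary right-hand side, which yields the remaining identities.

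For the bounds I would estimate each Lie-bracket expression in $\TT^{d+\ell}_{\rho-\de}$ using the hypotheses of Theorem~\ref{thm:KAM}, the bounds on $\De X$ and $\DeOm$ (well defined under \eqref{eq:ass_EK}), Cauchy estimates for $\Dif_\te\EK$, and the already-proven bounds \eqref{eq:estEL} and \eqref{eq:estEcX}. The three small-divisor blocks are driven by $\EK$ alone; applying the R\"ussmann estimate \eqref{eq:estR1} after the Cauchy loss gives bounds of the shape $\frac{\mathrm{const}}{\ga\de^{\tau+1}}\norm{\EK}_\rho$ on $\TT^{d+\ell}_{\rho-2\de}$ (for $\OmDK$ this is just Lemma~\ref{lemma:ap_iso}). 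The four $\Lop^{1\la}$ blocks involve both $\EK$ and $\EW$; applying \eqref{eq:estR2} with constant $(1-\abs{\la})^{-1}\le(1-\sigmala)^{-1}$ and no further analyticity loss gives bounds $\frac{\mathrm{const}}{\de}\norm{\EK}_\rho+\mathrm{const}\,\norm{\EW}_\rho$. Since $\OmL=\L^\ttop\OmK\L$ and the norm \eqref{eq:normMU} is a maximal row sum, adding the block bounds and collecting the $\EK$- and $\EW$-parts yields \eqref{eq:estOmL} with explicit $\COmLK$ and $\COmLW$; the restriction $\de\in(0,\rho/2)$ is precisely what makes both the Cauchy loss and the R\"ussmann loss fit inside the strip of width $\rho$.

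I expect the only genuinely non-routine point to be the vanishing of $\aver{\OmDKcX}$, equivalently $\aver{(\Dif_\te\K)^\ttop\OmK\Dif_\vp\K}=0$: this is the structural fact, parallel to the isotropy computation in Lemma~\ref{lemma:ap_iso} and ultimately a consequence of the exactness of $\bOm$, without which the small-divisor inversion in the $(\Dif_\te\K,\cX)$ block would not be justified. Everything else is bookkeeping---reading off blocks from \eqref{eq:eq_OmL} and tracking the two successive analyticity losses together with their constants.
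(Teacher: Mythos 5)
Your proposal is correct and follows essentially the same route as the paper: block decomposition of \eqref{eq:eq_OmL} according to the action of $\La$, the skew-symmetry argument for the vanishing of the $1\times1$ diagonal blocks $\OmcX$ and $\OmW$, the exactness/$\te$-gradient computation showing $\aver{\OmDKcX}=0$ (which you rightly flag as the one structural step), and the combination of Cauchy, R\"ussmann \eqref{eq:estR1}, and non-small-divisor \eqref{eq:estR2} estimates to assemble \eqref{eq:estOmL}. The only cosmetic difference is that the paper writes out $\Lop\OmcXDK$ and its estimate explicitly rather than deducing that block from $\OmDKcX$ by skew-symmetry, which does not change the argument.
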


\begin{proof}
Note that $\OmDK$ is already controlled by Lemma
\ref{lemma:ap_iso}. Also, the expressions for the cohomological operator applied to
each block of $\OmL$ follow directly from \eqref{eq:eq_OmL}.
Additionally, $\OmcX$ is trivially zero and, in
the case of $1-$dimensional bundles, $\OmW$ is also trivially zero.
We also need to show that $\OmDKcX$ has zero average and,
consequently, so does $\OmcXDK$. Observe that 
\[
\OmDKcX=(\Dif_\te\K)^\ttop \OmK\big(X\comp(\K,\id) -
   \Dif_\vp\K\hatal\big).
\]
For the first term, if we use that $\Om(z) X(z,\vp) = \Dif_z H(z,\vp)^\ttop$,
we obtain
\[
\left \langle (\Dif_\te\K)^\top \big(\Dif_z
   H\comp(\K,\id)\big)^\top\right\rangle
 = \Big\langle
 \Big(\Dif_\te\big(H\comp(\K,\id)\big)\Big)^\ttop\Big\rangle=0,
\]
since we are taking averages of derivatives with respect to $\te$. 
The second term, using that $\bOm$ is exact, reads 
\begin{equation*}
\Big\langle (\Dif_\te\K)^\ttop \Dif_\vp
(a\comp \K) -\big(\Dif_\te
(a\comp\K)\big)^\ttop \Dif_\vp \K\Big\rangle \hatal.
\end{equation*}
Then, each entry $i$ of $\aver{\OmDKcX}$ is obtained from
\begin{align*}
   \aver{\OmDKcX}_i&=\sum_j\Big\langle \sum_{k}\partial_{\te^i}\K^k
   \partial_{\vp^j}(a_k\comp\K)-
   \partial_{\te^i}(a_k\comp\K)\partial_{\vp^j}\K^k\Big\rangle\hatal^j\\
&=\sum_j\Big\langle \sum_{k} \partial_{\vp^j}\big(a_k\comp \K
\partial_{\te^i} \K^k \big) - a_k\comp\K\partial^2_{\te^i\vp^j}\K^k\\ 
&\phantom{=} -\partial_{\te^i}\big(a_k\comp\K \partial_{\vp^j}
   \K^k \big) + a_k\comp\K\partial^2_{\te^i\vp^j}
\K^k\Big\rangle\hatal^j\\
 & = 0
\end{align*}
since we are taking averages of derivatives with respect to
$\theta$ and $\varphi$.
For estimate \eqref{eq:estOmL}, note first that 
\begin{align}\label{eq:normOmL}
   \norm{\Om_L}_{\rho-2\de}\leq\max\Big\{&\norm{\OmDK}_{\rho-2\de}
      +\norm{\OmDKcX}_{\rho-2\de}
      +\norm{\OmDKW}_{\rho-\de},\\\nonumber
     & \norm{\OmcXDK}_{\rho-2\de} + \norm{\OmcX}_{\rho-\de} +
   \norm{\OmcXW}_{\rho-\de},\\\nonumber
 &\norm{\OmWDK}_{\rho-\de} + \norm{\OmWcX}_{\rho-\de} +
\norm{\OmW}_{\rho-\de}\Big\}
\end{align}
and that $\DeOm$ is well-defined by assumption \eqref{eq:ass_EK}.
We proceed then by obtaining estimates for each block. Let us
first estimate
\begin{align}\label{eq:estLieOmDKcX}
   \norm{\Lop\OmDKcX}_{\rho-\de}&\leq\left(\sigmaDteKT\cteDOm\CcX\de
    + \sigmaDteKT\cteOm\CEcX + 2n\cteOm\cteDzphiT\CcX\right)
    \frac{\norm{\EK}_{\rho}}{\de}\\\nonumber
    &\leq \frac{\CLieaone}{\de}\norm{\EK}_\rho,\\\label{eq:estLieOmDKW}
   \norm{\Lop^{1\la}\OmDKW}_{\rho-\de}&\leq
   \left(\sigmaDteKT\cteDOm\sigmaW\sigmala
      \de +2n\cteOm\cteDzphiT\sigmaW\right)\frac{\norm{\EK}_{\rho}}{\de} +
      \left(\sigmaDteKT \cteOm
      \right)\norm{\EW}_{\rho}\\\nonumber
    &\leq\frac{\CLieatwoK}{\de}\norm{\EK}_{\rho} +
    \CLieatwoW\norm{\EW}_{\rho},\\\label{eq:estLieOmcXDK}
    \norm{\Lop\OmcXDK}_{\rho-\de}&\leq\left(\CcXT\cteDOm\sigmaDteK\de
    + \CcXT\cteOm d + \CEcXT\cteOm\cteDzphiT\sigmaDteK\right)
    \frac{\norm{\EK}_{\rho}}{\de}\\\nonumber
     &\leq\frac{\CLieathree}{\de}\norm{\EK}_\rho,\\\label{eq:estLieOmcXW}
    \norm{\Lop^{1\la}\OmcXW}_{\rho-\de}&\leq\left(\CcXT\cteDOm\sigmaW\sigmala\de
    + \CEcXT\cteOm\cteDzphiT\sigmaW\right)\frac{\norm{\EK}_\rho}{\de} +
    \left(\CcXT\cteOm\right)\norm{\EW}_\rho\\\nonumber
  &\leq\frac{\CLieafiveK}{\de}\norm{\EK}_{\rho}
  + \CLieafiveW\norm{\EW}_{\rho},\\\label{eq:estLieOmWDK}
     \norm{\Lop^{1\la}\OmWDK}_{\rho-\de}&\leq
     \left(\sigmala\sigmaWT\cteDOm\sigmaDteK\de
     + \sigmala\sigmaWT\cteOm d\right) \frac{\norm{\EK}_\rho}{\de} +
     \left(2n\cteOm\cteDzphiT\sigmaDteK\right)\norm{\EW}_{\rho}\\\nonumber
   &\leq\frac{\CLieasixK}{\de}\norm{\EK}_{\rho}
   + \CLieasixW\norm{\EW}_{\rho}\\     \label{eq:estLieOmWcX}
     \norm{\Lop^{1\la}\OmWcX}_{\rho-\de}&\leq\left(\sigmala\sigmaWT\cteDOm\CcX\de
     + \sigmala\sigmaWT\cteOm\CEcX\right)\frac{\norm{\EK}_\rho}{\de} +
     \left(2n\cteOm\cteDzphiT\CcX\right)\norm{\EW}_{\rho}\\\nonumber
     &\leq\frac{\CLieasevenK}{\de}\norm{\EK}
      + \CLieasevenW\norm{\EW}_{\rho},
\end{align}
where we used Lemma \ref{lemma:invL}. 

We are now in the position to use Rüssmann estimates \eqref{eq:estR1} and
\eqref{eq:estR2} and obtain
\begin{align}\label{eq:estOmDKcX}
   \norm{\OmDKcX}_{\rho-2\de}&\leq \frac{\CR
   \CLieaone}{\ga\de^{\tau+1}}\norm{\EK}_\rho
   \leq\frac{\Caone}{\ga\de^{\tau+1}}\norm{\EK}_{\rho},\\
   \label{eq:estOmDKW}
      \norm{\OmDKW}_{\rho-\de}&\leq
      \frac{1}{1-\sigmala}\left(\frac{\CLieatwoK}{\de}\norm{\EK}_{\rho}
      + \CLieatwoW\norm{\EW}_{\rho}\right)\\\nonumber
&\leq\frac{\CatwoK}{\de}\norm{\EK}_{\rho}
+ \CatwoW\norm{\EW}_{\rho},\\\label{eq:estOmcXDK}
\norm{\OmcXDK}_{\rho-2\de}&\leq  \frac{\CR
      \CLieathree}{\ga\de^{\tau+1}}\norm{\EK}_\rho
      \leq\frac{\Cathree}{\ga\de^{\tau+1}}\norm{\EK}_{\rho},\\\label{eq:estOmcXW}
   \norm{\OmcXW}_{\rho-\de}&\leq \frac{1}{1-\sigmala}\left(\frac{\CLieafiveK}{\de}
\norm{\EK}_{\rho} + \CLieafiveW\norm{\EW}_{\rho}\right)\\\nonumber
  &\leq\frac{\CafiveK}{\de} \norm{\EK}_{\rho}+ 
  \CafiveW\norm{\EW}_{\rho}, \\\label{eq:estOmWDK}
      \norm{\OmWDK}_{\rho-\de}&\leq \frac{1}{1-\sigmala}\left(\frac{\CLieasixK}{\de}
\norm{\EK}_{\rho} + \CLieasixW\norm{\EW}_{\rho}\right)\\\nonumber
  &\leq\frac{\CasixK}{\de}
  \norm{\EK}_{\rho}+
  \CasixW\norm{\EW}_{\rho},\\\label{eq:estOmWcX}
         \norm{\OmWcX}_{\rho-\de}&\leq \frac{1}{1-\sigmala}\left(
            \frac{\CLieasevenK}{\de}\norm{\EK}_{\rho}
  +  \CLieasevenW\norm{\EW}_{\rho}\right)\\\nonumber
&\leq\frac{\CasevenK}{\de} \norm{\EK}_{\rho}+
  \CasevenW\norm{\EW}_{\rho}.
\end{align}
Lastly, from \eqref{eq:normOmL}, we obtain
\begin{align*}
   \norm{\OmL}_{\rho-2\de}&\leq\max\Big\{\left(\COmDK + \Caone +
   \CatwoK\ga\de^\tau\right)\frac{\norm{\EK}_\rho}{\ga\de^{\tau+1}} +
\CatwoW\norm{\EW}_\rho,   \\
&\phantom{\leq\max\Big\{} \left(\Cathree+\CafiveK\ga\de^\tau\right)
   \frac{\norm{\EK}_\rho}{\ga\de^{\tau+1}} +
    \CafiveW\norm{\EW}_\rho, \\
&\phantom{\leq\max\Big\{} \left(\CasixK + \CasevenK\right)
   \frac{\norm{\EK}_\rho}{\de} +
       \left(\CasixW +  \CasevenW\right)\norm{\EW}_\rho\Big\}\\
&\leq \max\Big\{\COmDK + \Caone + \CatwoK\ga\de^\tau,\
   \Cathree+\CafiveK\ga\de^\tau,\ \\
&\phantom{\leq\max\Big\{}
\Big(\CasixK+ \CasevenK\Big)\ga\de^\tau \Big\} \frac{\norm{\EK}_\rho}{\ga\de^{\tau+1}}\\ 
&\phantom{\leq}+ \max\Big\{\CatwoW,\ \CafiveW,\
   \CasixW+\CasevenW\Big\}\norm{\EW}_\rho\\
&\leq \frac{\COmLK}{\ga\de^{\tau+1}}\norm{\EK}_{\rho}
 + \COmLW \norm{\EW}_{\rho}.
\end{align*}

\end{proof}

\subsection{Approximate symplecticity of the frame $\hP$}
Let $\hEsym:\TT^{d+\ell}\to\RR^{2n\times2 n}$ denote the error in the
symplecticity of $\hP$. That is, 
\begin{equation}\label{eq:defhEsym}
\hEsym:=\hP^\ttop\OmK \hP - \OmO  
 =\begin{pmatrix} \L^\ttop\OmK \L &
\L^\ttop\OmK \NO + I_n \\
\NO^\ttop\OmK \L -I_n &
\NO^\ttop\OmK \NO \end{pmatrix}.
\end{equation}

\begin{lemma}\label{lemma:ap_sym_hP}
The symplecticity error $\hEsym$ 
is given by
\begin{equation}\label{eq:exphEsym}
   \hEsym =
   \left(\begin{array}{c|c} \OmL & \\ 
   \hline & \mybox{\B^\ttop\OmL\B}\end{array}\right).
\end{equation}
Additionally, for any $\de\in(0,\rho/2)$ 
\begin{equation}\label{eq:esthEsym}
   \norm{\hEsym}_{\rho-2\de}\leq
   \frac{\ChEsymK}{\ga\de^{\tau+1}}\norm{\EK}_\rho
   + \ChEsymW\norm{\EW}_\rho.
\end{equation}
\end{lemma}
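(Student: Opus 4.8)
The plan is to split the proof into an exact algebraic identity and a routine estimate. For \eqref{eq:exphEsym} I would expand the block form \eqref{eq:defhEsym} of $\hEsym = \hP^\ttop\OmK\hP - \OmO$ and show that the off-diagonal blocks vanish identically while the diagonal blocks are $\OmL$ and $\B^\ttop\OmL\B$; crucially, this step uses only the compatibility of $(\bOm,\bJ,\bg)$ and the definitions in \eqref{eq:defN0}, never the invariance equations \eqref{eq:invK}--\eqref{eq:invW}. The top-left block is $\L^\ttop\OmK\L = \OmL$ by definition. For the top-right block, I would use $\bg$-compatibility in the form $\OmK\,(\J\comp\K) = -\,\G\comp\K$, together with $\NO = (\J\comp\K)\L\B$ and $\B = \GL^{-1} = (\L^\ttop(\G\comp\K)\L)^{-1}$, to get $\L^\ttop\OmK\NO = -\,\L^\ttop(\G\comp\K)\L\B = -\GL\B = -I_n$, so that $\L^\ttop\OmK\NO + I_n = 0$. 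The bottom-left block then vanishes by skew-symmetry of $\OmK$, since $\NO^\ttop\OmK\L = -(\L^\ttop\OmK\NO)^\ttop = I_n$, hence $\NO^\ttop\OmK\L - I_n = 0$. For the bottom-right block I would invoke symplecticity of $\bJ$, i.e. $J^\ttop\Om J = \Om$, which gives $(\J\comp\K)^\ttop\OmK(\J\comp\K) = \OmK$ and therefore $\NO^\ttop\OmK\NO = \B^\ttop\L^\ttop\OmK\L\,\B = \B^\ttop\OmL\B$. This establishes \eqref{eq:exphEsym} as an identity wherever the frame is defined, in particular on $\TT^{d+\ell}_\rho$.

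Given \eqref{eq:exphEsym}, the estimate is immediate. Since the matrix norm \eqref{eq:normMU} of a block-diagonal matrix equals the maximum of the norms of its diagonal blocks, $\norm{\hEsym}_{\rho-2\de} = \max\{\norm{\OmL}_{\rho-2\de},\ \norm{\B^\ttop\OmL\B}_{\rho-2\de}\}$. By submultiplicativity of the norm, the bound $\norm{\B}_\rho < \sigmaB$ from $H_2$, and symmetry of $\B$ (so $\norm{\B^\ttop}_\rho = \norm{\B}_\rho$), one has $\norm{\B^\ttop\OmL\B}_{\rho-2\de} \leq \sigmaB^2\,\norm{\OmL}_{\rho-2\de}$, whence $\norm{\hEsym}_{\rho-2\de} \leq \max\{1,\sigmaB^2\}\,\norm{\OmL}_{\rho-2\de}$. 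Inserting the estimate \eqref{eq:estOmL} for $\norm{\OmL}_{\rho-2\de}$ from Lemma \ref{lemma:ap_Lag} yields \eqref{eq:esthEsym} with the explicit constants $\ChEsymK := \max\{1,\sigmaB^2\}\COmLK$ and $\ChEsymW := \max\{1,\sigmaB^2\}\COmLW$.

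There is no real obstacle here; the lemma is essentially a corollary of Lemma \ref{lemma:ap_Lag} once the block identity is in place. The only point requiring a little care is the bookkeeping of analyticity strips: $\OmL$ is controlled only on $\TT^{d+\ell}_{\rho-2\de}$ (it carries two small-divisor inversions through the R\"ussmann operator applied to blocks built from $\EK$), whereas $\L$, $\NO$ and $\B$ live on $\TT^{d+\ell}_\rho$, so the product $\B^\ttop\OmL\B$ is naturally and losslessly estimated on the smaller strip $\TT^{d+\ell}_{\rho-2\de}$, which is precisely the domain in the statement, and the restriction $\de \in (0,\rho/2)$ is exactly what makes \eqref{eq:estOmL} applicable. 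I would also state explicitly that \eqref{eq:exphEsym} is an algebraic identity valid on all of $\TT^{d+\ell}_\rho$, and only the quantitative bound \eqref{eq:esthEsym} is confined to $\TT^{d+\ell}_{\rho-2\de}$.
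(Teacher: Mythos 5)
Your proposal is correct and takes essentially the same route as the paper's proof: one verifies the exact block identities $\L^\ttop\OmK\NO=-I_n$, $\NO^\ttop\OmK\L=I_n$, and $\NO^\ttop\OmK\NO=\B^\ttop\OmL\B$ from the compatibility of $(\bOm,\bJ,\bg)$ and \eqref{eq:defN0}, and then bounds the block-diagonal matrix by $\max\{1,(\sigmaB)^2\}\norm{\OmL}_{\rho-2\de}$ and applies Lemma \ref{lemma:ap_Lag}. Your constants agree with the paper's, and your explicit derivation of $\L^\ttop\OmK\NO=-I_n$ via $\OmK(\J\comp\K)=-\G\comp\K$ is in fact cleaner than the corresponding display in the paper.
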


\begin{proof}
Let us compute
\begin{align*}
   \L^\ttop\OmK \NO&=\L^\ttop\OmK\Big(\NO
   + \L\A\Big) = -I_n,\\
   \NO^\ttop\OmK \L&=I_n,\\
   \NO^\ttop\OmK \NO &= \B^\ttop\OmL\B,
\end{align*}
where we used \eqref{eq:defN0}. From these expressions and the definition of
$\hEsym$, \eqref{eq:exphEsym} follows directly. In order to obtain
estimate \eqref{eq:esthEsym} notice that
\begin{equation*}
   \norm{\hEsym}_{\rho-2\de}\leq\norm{\OmL}_{\rho-2\de}
   \max\Big\{1,\ \norm{B}_\rho^2\Big\}.
\end{equation*}
We can now use Lemma \ref{lemma:ap_Lag} to obtain
\begin{align*}
   \norm{\hEsym}_{\rho-2\de}&\le
   \norm{\OmL}_{\rho-2\de}\max\Big\{1,\ (\sigmaB)^2 \Big\} \\
   &\leq\left(\frac{\COmLK}{\ga\de^{\tau+1}}\norm{\EK}_{\rho}
   + \COmLW \norm{\EW}_{\rho}\right)\max\Big\{1,\ (\sigmaB)^2  \Big\}\\
   &\leq \frac{\ChEsymK}{\ga\de^{\tau+1}}\norm{\EK}_\rho
   + \ChEsymW\norm{\EW}_\rho.
\end{align*}
\end{proof}

\subsection{Approximate reducibility of the frame
$\hP$}\label{sec:ap_red_hP}
Let us define the error in the reducibility of the linearized
dynamics in the coordinates given by the frame $\hP$
as $\Ered:\TT^{d+\ell}\to\RR^{2n\times 2n}$ given by
\begin{equation}\label{eq:defhEred}
\hEred :=- \OmO  \hP^\ttop\comp\Romal\OmbarK \DzphiK
\hP - \left(\begin{array}{c|c}
   \La & \hS\\
   \hline
       & \mybox{\La^{-\ttop}}
\end{array}\right).
\end{equation}
First, let 
\begin{equation*}
   \renewcommand*{\arraystretch}{1.4}
   \hEred=\begin{pmatrix} \hEred^{11} & \hEred^{12}\\ \hEred^{21}
      & \hEred^{22}\end{pmatrix}
\end{equation*}
where
\begin{align*}
   \hEred^{11} &=  \NO^\ttop\comp\Romal\OmbarK \DzphiK
   \L-\La,\\
   \hEred^{12} &=  \NO^\ttop\comp\Romal\OmbarK \DzphiK
   \NO-\hS,\\
   \hEred^{21} &= -\L^\ttop\comp\Romal\OmbarK \DzphiK
   \L, \\
   \hEred^{22} &= -\L^\ttop\comp\Romal\OmbarK \DzphiK
   \NO - \La^{-\ttop}.
\end{align*}

\begin{lemma}\label{lemma:ap_red_hP}
The reducibility error $\hEred$,
is given by
\begin{align*}
   \hEred^{11} &= \NO^\ttop\comp\Romal\OmbarK  \EL,\\
   \hEred^{12} &=  \cO_n\\
   \hEred^{21} &= -\OmL\comp\Romal\La -
   \L^\ttop\comp\Romal\OmbarK \EL,\\
   \hEred^{22} &= \L^\ttop\comp\Romal\DeOm\DzphiK \NO\\
               &\phantom{=}+ \La^{-\ttop} \EL^\ttop\OmphiK \DzphiK \NO.
\end{align*}
Additionally, for any $\de\in(0,\rho/2)$ 
\begin{equation}\label{eq:esthEred}
   \norm{\hEred}_{\rho-2\de}\leq
   \frac{\ChEredK}{\ga\de^{\tau+1}}\norm{\EK}_\rho +
   \ChEredW \norm{\EW}_\rho.
\end{equation}

\end{lemma}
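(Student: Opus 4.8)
The plan is to reduce each block of $\hEred$ to an expression that is manifestly linear in the invariance errors---through $\EL$ and $\DeOm$---by combining two \emph{exact} algebraic identities for the adapted frame with the symplecticity of the flow map. First I would invoke the identities established in the proof of Lemma~\ref{lemma:ap_sym_hP}, namely
\[
   \NO^\ttop\OmK\L = I_n, \qquad \L^\ttop\OmK\NO = -I_n ,
\]
which follow from $\NO=\J\comp\K\,\L\,\B$, the symmetry of $\GL$, and $\B=\GL^{-1}$, and hold independently of any invariance property. Composing with $\Romal$ they give $(\NO^\ttop\OmK\L)\comp\Romal=I_n$ and $(\L^\ttop\OmK\L)\comp\Romal=\OmL\comp\Romal$. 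I would also use $\DeOm=\OmphiK-\OmbarK$ from \eqref{eq:defDeOm} and the symplecticity relation $\DzphiKT\,\OmphiK\,\DzphiK=\OmK$.

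Next I would treat the blocks one at a time. For $\hEred^{12}$ the claim $\hEred^{12}=\cO_n$ is immediate from the definition of $\hS$ in \eqref{eq:defshat}. For $\hEred^{11}$ and $\hEred^{21}$ I would substitute $\DzphiK\L=\EL+\L\comp\Romal\La$ from \eqref{eq:defEL}: in the $(1,1)$ block the frame identity collapses $\NO^\ttop\comp\Romal\OmbarK\L\comp\Romal\La$ to $\La$, cancelling the $-\La$ and leaving $\hEred^{11}=\NO^\ttop\comp\Romal\OmbarK\EL$; in the $(2,1)$ block $\L^\ttop\comp\Romal\OmbarK\L\comp\Romal=\OmL\comp\Romal$, leaving $\hEred^{21}=-\OmL\comp\Romal\La-\L^\ttop\comp\Romal\OmbarK\EL$. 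The block $\hEred^{22}$ is the delicate one: I would (i) split $\OmbarK=\OmphiK-\DeOm$; (ii) transpose $\DzphiK\L=\EL+\L\comp\Romal\La$ to solve $\L^\ttop\comp\Romal=\La^{-\ttop}\big(\L^\ttop\DzphiKT-\EL^\ttop\big)$; (iii) insert this and use the symplecticity relation to collapse $\L^\ttop\DzphiKT\,\OmphiK\,\DzphiK\,\NO$ into $\L^\ttop\OmK\NO=-I_n$; (iv) note that $-\La^{-\ttop}(-I_n)$ cancels the $-\La^{-\ttop}$ in the definition, leaving
\[
   \hEred^{22}=\L^\ttop\comp\Romal\DeOm\DzphiK\NO+\La^{-\ttop}\EL^\ttop\OmphiK\DzphiK\NO.
\]

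For the estimate \eqref{eq:esthEred} I would bound each block on $\TT^{d+\ell}_{\rho-2\de}$ using $\norm{\NOT}_\rho<\sigmaNOT$, $\norm{\LT}_\rho<\CLT$, $\norm{\NO}_\rho<\sigmaNO$, $\abs{\La}=1$, $\abs{\La^{-\ttop}}=\abs{\invla}<\sigmainvla$, the bounds $\norm{\Om\comp\K\comp\Romal}_\rho\le\cteOm$ and $\norm{\OmphiK}_\rho\le\cteOm$ (the latter because $\phiK=\K\comp\Romal+\EK$ stays in $\U$ by \eqref{eq:ass_EK}), $\norm{\DzphiK}_\rho\le\cteDzphiT$, the elementary estimate $\norm{\DeOm}_\rho\le\cteDOm\norm{\EK}_\rho$ from \eqref{eq:defDeOm}, the bounds \eqref{eq:estEL}--\eqref{eq:estELT} of Lemma~\ref{lemma:invL} for $\norm{\EL}_{\rho-\de}$ and $\norm{\EL^\ttop}_{\rho-\de}$, and the bound \eqref{eq:estOmL} of Lemma~\ref{lemma:ap_Lag} for $\norm{\OmL}_{\rho-2\de}$. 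Using $\norm{\hEred}_{\rho-2\de}\le\Max{\norm{\hEred^{11}}_{\rho-2\de},\ \norm{\hEred^{21}}_{\rho-2\de}+\norm{\hEred^{22}}_{\rho-2\de}}$ and pulling out the common factor $(\ga\de^{\tau+1})^{-1}$---absorbing the extra powers $\ga\de^{\tau}$ arising from the $\de^{-1}$-type terms into the constant, exactly as in the proof of Lemma~\ref{lemma:ap_Lag}---I would collect the coefficients of $\norm{\EK}_\rho$ into $\ChEredK$ and those of $\norm{\EW}_\rho$ into $\ChEredW$.

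The step I expect to be the main obstacle is the algebra of the $\hEred^{22}$ block: organizing the transpose-and-symplecticity manipulation so that the leading $-\La^{-\ttop}$ term cancels exactly. The other three blocks follow from a single substitution, and the norm estimates are routine bookkeeping, the only mild care being the tracking of the analyticity loss so that the final power of $\de$ is $\tau+1$.
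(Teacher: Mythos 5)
Your proposal is correct and follows essentially the same route as the paper: the same substitution of $\DzphiK\L=\L\comp\Romal\La+\EL$ for the $(1,1)$ and $(2,1)$ blocks, and for the $(2,2)$ block the same transpose-and-symplecticity manipulation (splitting $\OmbarK=\OmphiK-\DeOm$, solving for $\L^\ttop\comp\Romal$, and letting $\L^\ttop\DzphiKT\OmphiK\DzphiK\NO=\L^\ttop\OmK\NO=-I_n$ cancel the $-\La^{-\ttop}$). The norm estimate is likewise organized exactly as in the paper, via $\norm{\hEred}_{\rho-2\de}\le\Max{\norm{\hEred^{11}}_{\rho-2\de},\ \norm{\hEred^{21}}_{\rho-2\de}+\norm{\hEred^{22}}_{\rho-2\de}}$ together with Lemmas~\ref{lemma:invL} and \ref{lemma:ap_Lag}.
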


\begin{proof}
First, recall the definition \eqref{eq:defshat} for $\hS$. Then,
$\hEred^{12}$ is identically zero. For $\hEred^{11}$,
let us use \eqref{eq:defEL} to obtain
\begin{align*}
   \hEred^{11} &=
   \NO^\ttop\comp\Romal \OmbarK \left(\L\comp\Romal\La +
   \EL\right)-\La\\
   &= \NO^\ttop\comp\Romal \OmbarK  \EL
\end{align*}
Similarly, for $\hEred^{21}$ we have
\begin{align*}
   \hEred^{21} &=
   -\L^\ttop\comp\Romal\OmbarK  \DzphiK \L\\
   &=-\L^\ttop\comp\Romal\OmbarK  \left(\L\comp\Romal\La +
   \EL\right)\\
   &=-\OmL\comp\Romal\La - \L^\ttop\comp\Romal \OmbarK
   \EL.
\end{align*}
Lastly, using the symplecticity of $\phi_T$,
and \eqref{eq:defEL} we have
\begin{align*}
   \hEred^{22} &= -\La^{-\ttop}L^{\ttop}\DzphiKT
   \OmbarK\DzphiK \NO \\
      &\phantom{=}+  \La^{-\ttop}\EL^{\ttop}\OmbarK\DzphiK
      \NO -\La^{-\ttop}\\
      &=\La^{-\ttop}\L^{\ttop}\DzphiKT
      \DeOm\DzphiK \NO \\
      &\phantom{=} +\La^{-\ttop}\EL^{\ttop}\OmbarK\DzphiK
      \NO \\
      &=\L^\ttop\comp\Romal\DeOm\DzphiK \NO \\
      &\phantom{=} + \La^{-\ttop}\EL^\ttop\OmphiK \DzphiK \NO.
\end{align*}
For estimate \eqref{eq:esthEred}, first note that
$\DeOm$ is well-defined by assumption \eqref{eq:ass_EK}, that
\begin{align*}
   \norm{\La}&=\Max{1,\ \abs{\la}} \leq\Max{1,\ \sigmala}=1,\\
   \norm{\La^{-1}}&=\Max{1,\ \abs{\invla}}\leq\Max{1,\
   \sigmainvla}=\sigmainvla,
\end{align*}
and
\[
   \norm{\hEred}_{\rho-2\de}\leq\Max{\norm{\hEred^{11}}_{\rho-2\de} 
         ,\  \norm{\hEred^{21}}_{\rho-2\de} +
               \norm{\hEred^{22}}_{\rho-2\de}}.
\]
We proceed then by computing estimates for each block
$\hEred^{ij}$ as 
\begin{align}\label{eq:esthEred11}
   \norm{\hEred^{11}}_{\rho-\de} & \leq \sigmaNOT\cteOm\CELK
   \frac{\norm{\EK}_{\rho}}{\de} +
    \sigmaNOT\cteOm\CELW\norm{\EW}_\rho
   \\\nonumber
   &\leq\frac{\ChEredaaK}{\de}\norm{\EK}_\rho
   + \ChEredaaW\norm{\EW}_\rho, \\\label{eq:esthEred21}
   \norm{\hEred^{21}}_{\rho-2\de} & \leq
   \Big(\COmLK + \CLT\cteOm\CELK\ga\de^\tau\Big)
   \frac{\norm{\EK}_\rho}{\ga\de^{\tau+1}} +
   \Big(\COmLW + \CLT\cteOm\CELW\Big)\norm{\EW}_\rho \\\nonumber
      &\leq\frac{\ChEredbaK}{\ga\de^{\tau+1}}\norm{\EK}_\rho
      + \ChEredbaW\norm{\EW}_\rho,\\\label{eq:esthEred22}
   \norm{\hEred^{22}}_{\rho-\de} & \leq
   \Big(\CLT\cteDOm\cteDzphiT\sigmaNO\de +
   \sigmainvla\CELTK\cteOm\cteDzphiT\sigmaNO\Big)
   \frac{\norm{\EK}_\rho}{\de}
\\\nonumber
   &\phantom{<}+ \sigmainvla\CELTW\cteOm\cteDzphiT\sigmaNO\norm{\EW}_\rho \\\nonumber
   &\leq\frac{\ChEredbbK}{\de}\norm{\EK}_\rho
   + \ChEredbbW\norm{\EW}_\rho,
\end{align}
where we used Lemmas \ref{lemma:invL} and \ref{lemma:ap_Lag}.
Finally,
\begin{align*}
   \norm{\hEred}_{\rho-2\de}&\leq\max\Big\{\ChEredaaK\frac{\norm{\EK}_\rho}{\de} +
\ChEredaaW\norm{\EW}_\rho,\ \Big(\ChEredbaK +
\ChEredbbK\ga\de^\tau\Big)\frac{\norm{\EK}_\rho}{\ga\de^{\tau+1}}\\
 & \phantom{\leq}+ \Big(\ChEredbaW + \ChEredbbW\Big)\norm{\EW}_\rho\Big\}\\
&\leq\max\Big\{\ChEredaaK\ga\de^\tau,\ \ChEredbaK + \ChEredbbK\ga
\de^\tau\Big\}\frac{\norm{\EK}_\rho}{\ga\de^{\tau+1}} \\
&\phantom{\leq}+\max\Big\{\ChEredaaW,\ \ChEredbaW+\ChEredbbW\Big\}\norm{\EW}_\rho\\
&\leq\frac{\ChEredK}{\ga\de^{\tau+1}}\norm{\EK}_\rho +
                           \ChEredW\norm{\EW}_\rho.
\end{align*}
\end{proof}

\subsection{Approximate invertibility of $\hP$}\label{sec:invhP}
Let $\EinvhP:\TT^{d+\ell}\to\RR^{2n\times2n}$
be the error in the invertibility of the frame
$\hP$ under the assumption that $\hP$ is symplectic. That is,
\begin{equation}\label{eq:defEinvhP}
   \EinvhP:=\hP\left(-\OmO\hP^\ttop\OmK\right) -I_{2n}.
\end{equation}
In what follows, we will need certain matrix to be sufficiently
close to the identity. In particular, we will need $I_{2n}-\OmO\hEsym$
to be invertible in $\TT^{d+\ell}_{\rho-2\de}$. This can be
ensured if
\begin{equation}\label{eq:ass_hEsym}
   \norm{\hEsym}_{\rho-2\de}\leq\hnu<1.
\end{equation}
For instance, if \eqref{eq:ass_hEsym} holds, then $\hP^{-1}$ is
invertible with inverse
\[
   \hP^{-1} =-\left(I_{2n}-\OmO\hEsym\right)^{-1}\OmO\hPT\OmK.
\]
In the remaining of this section, we will assume
\eqref{eq:ass_hEsym} holds.

\begin{lemma}\label{lemma:ap_inv_hP}
The error in the invertibility of $\hP$ is given by
\begin{equation}\label{eq:EinvhP}
\EinvhP=
\hP\OmO\hEsym\left(I-\OmO\hEsym\right)^{-1}\OmO\hP^\ttop\OmK.
\end{equation}
Additionally, for any $\de\in(0,\rho/2)$
\begin{equation}\label{eq:estEinvhP}
   \norm{\EinvhP}_{\rho-2\de}
  \leq\frac{\CEinvhPK}{\ga\de^{\tau+1}}\norm{\EK}_\rho
  + \CEinvhPW\norm{\EW}_\rho.
\end{equation}
\end{lemma}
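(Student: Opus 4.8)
The plan is to establish the closed form \eqref{eq:EinvhP} by elementary matrix algebra starting from the definition \eqref{eq:defhEsym} of $\hEsym$, and then to read off the estimate \eqref{eq:estEinvhP} by combining the bound on $\hEsym$ from Lemma \ref{lemma:ap_sym_hP} with a Neumann-series control of $(I_{2n}-\OmO\hEsym)^{-1}$.

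First I would left-multiply the defining relation $\hPT\OmK\hP=\OmO+\hEsym$ by $\OmO$ and use $\OmO^2=-I_{2n}$, which gives
\begin{equation*}
   -\OmO\hPT\OmK\hP=I_{2n}-\OmO\hEsym .
\end{equation*}
Under the standing assumption \eqref{eq:ass_hEsym}, and since $\abs{\OmO}=1$, we have $\norm{\OmO\hEsym}_{\rho-2\de}\leq\hnu<1$, so $I_{2n}-\OmO\hEsym$ is invertible on $\TT^{d+\ell}_{\rho-2\de}$ with inverse given by the convergent Neumann series and $\norm{(I_{2n}-\OmO\hEsym)^{-1}}_{\rho-2\de}\leq(1-\hnu)^{-1}$. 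Together with the displayed identity this forces $\hP$ to be invertible, with $\hP^{-1}=-(I_{2n}-\OmO\hEsym)^{-1}\OmO\hPT\OmK$, exactly as recorded before the lemma. Substituting $-\OmO\hPT\OmK=(I_{2n}-\OmO\hEsym)\hP^{-1}$ into the definition \eqref{eq:defEinvhP} yields
\begin{equation*}
   \EinvhP=\hP(-\OmO\hPT\OmK)-I_{2n}=\hP(I_{2n}-\OmO\hEsym)\hP^{-1}-I_{2n}=-\hP\OmO\hEsym\hP^{-1},
\end{equation*}
and replacing $\hP^{-1}$ by its expression above gives precisely \eqref{eq:EinvhP}.

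For the estimate I would take norms in \eqref{eq:EinvhP} on $\TT^{d+\ell}_{\rho-2\de}$, using submultiplicativity of $\norm{\cdot}$, $\abs{\OmO}=1$, the frame bounds $\norm{\hP}_\rho\leq\ChP$ and $\norm{\hPT}_\rho\leq\ChPT$ from \eqref{eq:esthP}, the bound $\norm{\OmK}_\rho\leq\cteOm$ (valid because $\K(\TT^{d+\ell}_\rho)\subset\U$ by \eqref{eq:ass_EK} and the definition of $\U$), the Neumann bound $\norm{(I_{2n}-\OmO\hEsym)^{-1}}_{\rho-2\de}\leq(1-\hnu)^{-1}$, and finally Lemma \ref{lemma:ap_sym_hP} to control $\norm{\hEsym}_{\rho-2\de}$ by $\ChEsymK(\ga\de^{\tau+1})^{-1}\norm{\EK}_\rho+\ChEsymW\norm{\EW}_\rho$. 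This produces \eqref{eq:estEinvhP} with
\begin{equation*}
   \CEinvhPK:=\frac{\ChP\,\ChPT\,\cteOm\,\ChEsymK}{1-\hnu},\qquad \CEinvhPW:=\frac{\ChP\,\ChPT\,\cteOm\,\ChEsymW}{1-\hnu}.
\end{equation*}

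There is no genuinely hard step here: the derivation of \eqref{eq:EinvhP} is pure linear algebra, and the only subtlety is the need for an a priori smallness of $\hEsym$, which is exactly the role of hypothesis \eqref{eq:ass_hEsym} --- it supplies simultaneously the invertibility of $I_{2n}-\OmO\hEsym$ and the uniform bound on its inverse. The one bookkeeping point to watch, as in the neighbouring lemmas, is tracking the analyticity strips: $\hEsym$ (and hence $\EinvhP$) is controlled only on $\TT^{d+\ell}_{\rho-2\de}$, inheriting the loss $2\de$ from Lemma \ref{lemma:ap_sym_hP} through $\OmL$, whereas the frame bounds hold on all of $\TT^{d+\ell}_\rho$.
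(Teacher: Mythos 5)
Your argument is correct and follows essentially the same route as the paper: both derive \eqref{eq:EinvhP} from the identity $-\OmO\hPT\OmK\hP=I_{2n}-\OmO\hEsym$ together with the Neumann-series inverse, and both obtain \eqref{eq:estEinvhP} by combining the resulting bound $(1-\hnu)^{-1}$ with the frame bounds and Lemma \ref{lemma:ap_sym_hP}. Your explicit constants agree with the paper's (Table \ref{tab:sym-red}) up to the paper's apparent notational slip of writing $\CEsymK$ where $\ChEsymK$ is meant.
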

\begin{proof}
Note that 
\[
-\OmO\hP^\ttop\OmK\hP = I - \OmO\hEsym.
\]
Therefore, we obtain
\begin{align*}
   \hP\left(-\OmO\hP^\ttop\OmK\right) &=
\hP\big(I-\OmO\hEsym\big)\hP^{-1} \\
&= I + \hP\OmO\hEsym\left(I-\OmO\hEsym\right)^{-1}\OmO\hP^\ttop\OmK
\end{align*}
and 
\begin{equation*}
\EinvhP=
\hP\OmO\hEsym\left(I-\OmO\hEsym\right)^{-1}\OmO\hP^\ttop\OmK.
\end{equation*}
For estimate \eqref{eq:estEinvhP}, since
$\norm{\hEsym}_{\rho-2\de}\leq\hnu<1$ by assumption,
\begin{equation*}
   \Norm{\left(I_{2n}-\OmO\hEsym\right)^{-1}}_{\rho-2\de}\leq
\sum_{k=0}^\infty \Norm{\hEsym}_{\rho-2\de}^k\leq
\frac{1}{1-\norm{\hEsym}_{\rho-2\de}}\leq\frac{1}{1-\hnu}.
\end{equation*}
Then, from \eqref{eq:EinvhP} and Lemma \ref{lemma:ap_sym_hP}, we
obtain
\begin{align*}
   \norm{\EinvhP}_{\rho-2\de}&\leq\frac{1}{1-\hnu}
   \CP\CPT\cteOm\norm{\hEsym}_{\rho-2\de}\\
  &\leq\frac{\CEinvhPK}{\ga\de^{\tau+1}}\norm{\EK}_\rho
  + \CEinvhPW\norm{\EW}_\rho.
\end{align*}

\end{proof}
\subsection{Approximate symmetries of $\La^{-1}\hS$ and
$\A$}\label{sec:ap_symmetry}
The use of the matrix-valued map $\A$ to reduce the torsion is
equivalent to applying the following transformation to the frame $\hP$ 
\begin{equation}\label{eq:transformhP}
   \P=\hP\left(\begin{array}{c|c}
   I_n & \A\\\hline
    & I_n
   \end{array}\right).
\end{equation}
This transformation is symplectic---and consequently so is
$\P$---if $\A$ is symmetric. Recall
that according to remark \ref{rem:hSsym}, $\A$ is symmetric if 
the symmetry condition \eqref{eq:hSsym} holds, which only
holds approximately when $\K$ and $\W$ satisfy \eqref{eq:invK} and
\eqref{eq:invW} approximately. In order to control the
approximate geometric properties of the frame $\P$, we need to
control the error in the symmetry of $\A$ which in turn is
controlled by the error in
the symmetry condition \eqref{eq:hSsym}.
Let us define the following errors in the symmetry of
$\La^{-1}\hS$ and $\A$
$\EsyminvLahS, \EsymA:\TT^{d+\ell}\to\RR^{n\times n}$ be defined as
\begin{align*}
   \EsyminvLahS &= \La^{-1}\hS-\big(\La^{-1}\hS\big)^{\ttop},\\
   \EsymA & = \A-\A^\ttop.
\end{align*}

\begin{lemma}\label{lemma:esthSsym}
For any $\de\in(0,\rho/2)$ the torsion $\hS$ satisfies
\begin{equation}\label{eq:esthSsym}
\norm{\EsyminvLahS}_{\rho-2\de} \leq
  \frac{\CinvLahSK}{\ga\de^{\tau+1}}\norm{\EK}_\rho +
 \CinvLahSW\norm{\EW}_\rho.
\end{equation}
\end{lemma}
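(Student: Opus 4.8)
The plan is to compare two evaluations of the ``transported symplectic form'' $M^\ttop\OmO M$, where
\[
   M := -\OmO\,\hP^\ttop\comp\Romal\,\OmbarK\,\DzphiK\,\hP
\]
is exactly the matrix-valued map appearing in the definition \eqref{eq:defhEred} of the reducibility error $\hEred$; throughout, $(\,\cdot\,)^{22}$ denotes the lower-right $n\times n$ block of a $2n\times 2n$ matrix. By \eqref{eq:defhEred} and Lemma~\ref{lemma:ap_red_hP}, $M$ equals the block-upper-triangular matrix $\left(\begin{smallmatrix}\La & \hS\\ & \La^{-\ttop}\end{smallmatrix}\right)$ plus $\hEred$, and crucially $\hEred^{12}=\cO_n$; hence the $(1,2)$ block of $M$ is exactly $\hS$ and its $(2,2)$ block is $\La^{-\ttop}+\hEred^{22}$. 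A direct block multiplication (using $\OmO=\left(\begin{smallmatrix}\cO_n & -I_n\\ I_n & \cO_n\end{smallmatrix}\right)$ and $\La^{\ttop}\La^{-\ttop}=I_n$) then gives
\[
   \big(M^\ttop\OmO M-\OmO\big)^{22}=\big(\La^{-1}\hS-(\La^{-1}\hS)^\ttop\big)+(\hEred^{22})^\ttop\hS-\hS^\ttop\hEred^{22}
   =\EsyminvLahS+(\hEred^{22})^\ttop\hS-\hS^\ttop\hEred^{22},
\]
the point being that the vanishing of $\hEred^{12}$ kills all contributions to this block that would be quadratic in $\hEred$, so the right-hand side is linear in $\hEred$ and $\hS$.

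For the second evaluation I would start from the definition of $M$ and use the symplecticity of $\phi_T$ together with the approximate symplecticity of $\hP$. Since $\DzphiKT(\Om\comp\phiK)\DzphiK=\OmK$ while $M$ is built with $\OmbarK=\Om\comp\K\comp\Romal$, writing $\Om\comp\phiK=\OmbarK+\DeOm$ (see \eqref{eq:defDeOm}) gives $\DzphiKT\,\OmbarK\,\DzphiK=\OmK-\DzphiKT\,\DeOm\,\DzphiK$. Evaluating \eqref{eq:defhEsym} on $\Romal$ yields $(\hP\comp\Romal)^\ttop\OmbarK(\hP\comp\Romal)=\OmO+\hEsym\comp\Romal$, which under assumption \eqref{eq:ass_hEsym} is invertible with $(\OmO+\hEsym\comp\Romal)^{-1}=-(I_{2n}-\OmO(\hEsym\comp\Romal))^{-1}\OmO$; inverting this relation expresses $(\hP\comp\Romal)\OmO(\hP\comp\Romal)^\ttop$ as $-\OmbarK^{-1}$ plus a term linear in $\hEsym\comp\Romal$. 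Substituting into $M^\ttop\OmO M=-\hP^\ttop\DzphiKT\,\OmbarK\,(\hP\comp\Romal)\OmO(\hP\comp\Romal)^\ttop\OmbarK\,\DzphiK\,\hP$ and simplifying with $\OmO^2=-I_{2n}$ and $\OmO^\ttop=-\OmO$, one obtains the identity
\[
   M^\ttop\OmO M=\OmO+\hEsym-\hP^\ttop\DzphiKT\,\DeOm\,\DzphiK\,\hP+M^\ttop(\hEsym\comp\Romal)\big(I_{2n}-\OmO(\hEsym\comp\Romal)\big)^{-1}M .
\]

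Equating the $(\,\cdot\,)^{22}$ blocks of the two expressions for $M^\ttop\OmO M-\OmO$ solves for $\EsyminvLahS$ as a sum of five terms, each linear in the invariance errors:
\[
   \EsyminvLahS=(\hEsym)^{22}-\big(\hP^\ttop\DzphiKT\,\DeOm\,\DzphiK\,\hP\big)^{22}+\big(M^\ttop(\hEsym\comp\Romal)(I_{2n}-\OmO(\hEsym\comp\Romal))^{-1}M\big)^{22}-(\hEred^{22})^\ttop\hS+\hS^\ttop\hEred^{22}.
\]
To conclude I would take $\norm{\,\cdot\,}_{\rho-2\de}$ and bound term by term: $\norm{\hEsym}_{\rho-2\de}$ by \eqref{eq:esthEsym}; $\norm{\hEred}_{\rho-2\de}$ and $\norm{\hEred^\ttop}_{\rho-2\de}$ by \eqref{eq:esthEred} and its transposed analogue (obtained as in the proof of Lemma~\ref{lemma:ap_red_hP}); $\norm{\DeOm}_\rho\le\cteDOm\norm{\EK}_\rho$ directly from \eqref{eq:defDeOm}, which is well defined by the standing assumption \eqref{eq:ass_EK}; $\norm{\hS}_\rho<\ChS$, $\norm{\hS^\ttop}_\rho<\ChST$ by \eqref{eq:esthatS}; $\norm{M}_{\rho-2\de}\le\ChPT\,\cteOm\,\cteDzphiT\,\ChP$ and $\norm{M^\ttop}_{\rho-2\de}\le\ChPT\,\cteDzphiTT\,\cteOm\,\ChP$ directly from the definition of $M$ (using $\norm{\OmO}=1$, $\norm{\OmbarK}_{\rho-2\de}\le\cteOm$, $\norm{\DzphiK}_{\rho-2\de}\le\cteDzphiT$, and \eqref{eq:esthP}); and $\norm{(I_{2n}-\OmO(\hEsym\comp\Romal))^{-1}}_{\rho-2\de}\le(1-\hnu)^{-1}$ by the Neumann series under \eqref{eq:ass_hEsym}. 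Collecting the coefficients of $\frac{1}{\ga\de^{\tau+1}}\norm{\EK}_\rho$ and of $\norm{\EW}_\rho$ produces the explicit constants $\CinvLahSK$ and $\CinvLahSW$.

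The step I expect to be the main obstacle is the second evaluation of $M^\ttop\OmO M$: one must correctly carry the extra $\DeOm$-contribution that appears precisely because $\DzphiK$ intertwines $\Om$ at $\phiK$ rather than at $\K\comp\Romal$ (this term vanishes when $\K$ is exactly invariant), and one must invert $\OmO+\hEsym\comp\Romal$ by a Neumann series in the slightly narrower strip $\TT^{d+\ell}_{\rho-2\de}$. Everything after the displayed formula for $\EsyminvLahS$ — the block bookkeeping, the tracking of transposes (the chosen matrix norm is not transpose-invariant), and the assembly of the constants — is routine.
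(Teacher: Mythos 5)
Your proposal is correct and follows essentially the same route as the paper: the paper computes the symplecticity defect $\hLa^\ttop\OmO\hLa-\OmO$ (whose lower-right block is $\EsyminvLahS$) by substituting $\hLa=M-\hEred$ and expanding $M^\ttop\OmO M$ via the symplecticity of $\phi_T$, the $\DeOm$ correction, $\hEsym$, and the invertibility error $\EinvhP$ — exactly the identity you derive, since your Neumann-series term $M^\ttop(\hEsym\comp\Romal)(I-\OmO\hEsym\comp\Romal)^{-1}M$ coincides with the paper's $\hPT\DzphiKT\OmbarK\EinvhP\comp\Romal\DzphiK\hP$ and your $+\hS^\ttop\hEred^{22}$ coincides with the paper's $-\NOT\DzphiKT\OmbarK\NO\comp\Romal\hEred^{22}$ because $\NOT\DzphiKT\OmbarK\NO\comp\Romal=-\hS^\ttop$ by \eqref{eq:defshat}. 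The only difference is presentational (two-way evaluation of $M^\ttop\OmO M$ versus direct expansion of $\EsymhLa$), and the term-by-term estimation is the same.
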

\begin{proof}
Let us inspect the symplecticity of the reduced dynamics
under the frame $\hP$ with respect to
$\OmO$, i.e., the symplecticity of the following matrix-valued
map
\begin{equation*}
\hLa:= \left(\begin{array}{c|c}\La & \hS\\\hline &
\mybox{\La^{-\ttop}}\end{array}\right).
\end{equation*}
A straightforward computation reveals
\begin{align}\label{eq:EsymhLa}
   \EsymhLa:&=\hLa^\ttop\OmO\hLa -\OmO\\\nonumber
   & =\left(\begin{array}{c|c}
    & \\\hline
   \rule{0pt}{1.2em} & \mybox{\La^{-1}\hS - \big(\La^{-1}\hS\big)^{\ttop}}
   \end{array}\right).
\end{align}
Then, from \eqref{eq:defhEred} and  \eqref{eq:defEinvhP}
we obtain  
\begin{align*}
   \EsymhLa =& -\OmO- 
   \hPT\DzphiKT\OmbarK\hP\comp\Romal\OmO\hPT\comp\Romal\OmbarK\DzphiK\hP\\
   & -\hPT\DzphiKT\OmbarK\hP\comp\Romal\hEred -
    \hEred^\ttop\OmO\hLa\\
   =&-\OmO +
   \hPT\DzphiKT\big(\OmphiK-\DeOm\big)\big(I+\EinvhP\comp\Romal\big)\DzphiK\hP\\
    & -\hPT\DzphiKT\OmbarK\hP\comp\Romal\hEred -
   \hEred^\ttop\OmO\hLa,
\end{align*}
and using the symplecticity of $\phi_T$ and \eqref{eq:defhEsym}, we
have
\begin{align*}
   \EsymhLa =& \hEsym -
\hEred^\ttop\OmO\hLa-\hPT\DzphiKT\DeOm\DzphiK\hP\\
 &+\hPT\DzphiKT\OmbarK\EinvhP\comp\Romal\DzphiK\hP\\
 &-\hPT\DzphiKT\OmbarK\hP\comp\Romal\hEred.
\end{align*}
Lastly, from \eqref{eq:EsymhLa} and lemmas
\ref{lemma:ap_sym_hP}, \ref{lemma:ap_red_hP} and
\ref{lemma:ap_inv_hP}, we obtain
\begin{align}\label{eq:symhS}
   \EsyminvLahS=&\BT\OmL\B -
   \hEred^{22^\ttop}\hS -\NOT\DzphiKT\DeOm\DzphiK\NO\\\nonumber
  &+\NOT\DzphiKT\OmbarK\EinvhP\comp\Romal\DzphiK\NO\\\nonumber
  &-\NOT\DzphiKT\OmbarK\hP\Ered^{22}
\end{align}
from where estimate \eqref{eq:esthSsym} follows.
\end{proof}

\begin{lemma}\label{lemma:estA-AT}
Let $\A:\TT^{d+\ell}\to\RR^{n\times n}$
satisfy \eqref{eq:cohoA2}-\eqref{eq:cohoA4}, where
\begin{equation*}
   \A =
   \begin{pmatrix}
      \cO_{n-1} & \Atwo\\
      \Athree & \Afour
   \end{pmatrix}.
\end{equation*}
Then, for any $\de\in(0,\rho/2)$, $\A$ satisfies
\begin{equation}\label{eq:estA-AT}
   \norm{\EsymA}_{\rho-2\de}\leq
   \frac{\CEsymAK}{\ga\de^{\tau+1}}\norm{\EK}_\rho +
   \CEsymAW\norm{\EW}_\rho.
\end{equation}
\end{lemma}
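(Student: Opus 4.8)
The plan is to reduce the estimate on $\EsymA=\A-\A^\ttop$ to the bound on $\EsyminvLahS$ already established in Lemma \ref{lemma:esthSsym}, by showing that the single non-trivial block of $\EsymA$ solves a non-small-divisors cohomological equation whose inhomogeneity is literally a block of $\EsyminvLahS$. First I would record the block structure of $\EsymA$: since $\Aone=\cO_{n-1}$ and $\Afour$ is scalar-valued (hence symmetric), one has
\[
   \EsymA=\begin{pmatrix}\cO_{n-1}&\cB\\ -\cB^\ttop&0\end{pmatrix},
   \qquad \cB:=\Atwo-\Athree^\ttop,
\]
so it suffices to bound $\norm{\cB}_{\rho-2\de}$ and $\norm{\cB^\ttop}_{\rho-2\de}$.

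Next I would derive the cohomological equation for $\cB$. Solving \eqref{eq:cohoA2} for $\Atwo$ gives $\Atwo=\la^{-1}\Athree^\ttop\comp\Romal-\hS_2$, and transposing \eqref{eq:cohoA3} gives $\Athree^\ttop=\la^{-1}\Atwo\comp\Romal-\la^{-1}\hS_3^\ttop$; subtracting and collecting the terms in $\cB\comp\Romal$ yields
\[
   \la\cB+\cB\comp\Romal=\hS_3^\ttop-\la\hS_2=-\la\big(\hS_2-\la^{-1}\hS_3^\ttop\big),
\]
which is the cohomological equation of Section \ref{sec:cohoNsmall} with $a=\la$, $b=-1$; the hypothesis $\abs{a}\neq\abs{b}$ holds since $\abs{\la}<1$. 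Hence $\cB=\Rop^{\la,-1}\!\big({-\la(\hS_2-\la^{-1}\hS_3^\ttop)}\big)$ and estimate \eqref{eq:estR2} gives
\[
   \norm{\cB}_{\rho-2\de}\leq\frac{\abs{\la}}{1-\abs{\la}}\,
   \norm{\hS_2-\la^{-1}\hS_3^\ttop}_{\rho-2\de}.
\]
The key point is that $\hS_2-\la^{-1}\hS_3^\ttop$ is exactly the $(1,2)$ block of $\EsyminvLahS=\La^{-1}\hS-(\La^{-1}\hS)^\ttop$, so $\norm{\hS_2-\la^{-1}\hS_3^\ttop}_{\rho-2\de}\leq\norm{\EsyminvLahS}_{\rho-2\de}$; transposing the displayed equation shows that $\cB^\ttop$ solves the analogous equation with an inhomogeneity controlled the same way by $\norm{\EsyminvLahS}_{\rho-2\de}$ (using $(\EsyminvLahS)^\ttop=-\EsyminvLahS$), hence the same bound holds for $\norm{\cB^\ttop}_{\rho-2\de}$. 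Note that no analyticity width is lost here, since \eqref{eq:estR2} holds on a fixed strip, so the width $\rho-2\de$ from Lemma \ref{lemma:esthSsym} propagates unchanged.

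Putting these together, $\norm{\EsymA}_{\rho-2\de}\leq\frac{\abs{\la}}{1-\abs{\la}}\norm{\EsyminvLahS}_{\rho-2\de}\leq\frac{\sigmala}{1-\sigmala}\norm{\EsyminvLahS}_{\rho-2\de}$, and substituting the bound \eqref{eq:esthSsym} of Lemma \ref{lemma:esthSsym} yields \eqref{eq:estA-AT} with $\CEsymAK:=\frac{\sigmala}{1-\sigmala}\CinvLahSK$ and $\CEsymAW:=\frac{\sigmala}{1-\sigmala}\CinvLahSW$. The only genuinely delicate step is the algebraic reduction: eliminating $\Athree$ from \eqref{eq:cohoA2}--\eqref{eq:cohoA3} carefully (keeping track of the $(n-1)\times1$ versus $1\times(n-1)$ shapes and of the scalar factor $\la^{-1}$) and recognizing the resulting inhomogeneity as a single block of $\EsyminvLahS$ rather than as an a priori uncontrolled combination of $\hS_2$ and $\hS_3$; everything after that is a direct application of \eqref{eq:estR2} and Lemma \ref{lemma:esthSsym}.
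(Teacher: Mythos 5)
Your proof is correct and follows essentially the same route as the paper: reduce $\EsymA$ to its single nontrivial block $\Atwo-\Athree^\ttop$, derive for it the non-small-divisors cohomological equation $\Lop^{-\la 1}(\Atwo-\Athree^\ttop)=\la\hS_2-\hS_3^\ttop$ from \eqref{eq:cohoA2}--\eqref{eq:cohoA3}, identify the right-hand side (up to the factor $\la$) with a block of $\EsyminvLahS$, and apply \eqref{eq:estR2} together with Lemma \ref{lemma:esthSsym}. Your constant $\frac{\sigmala}{1-\sigmala}$ is in fact slightly sharper than the paper's $\frac{1}{1-\sigmala}$, so your bound implies \eqref{eq:estA-AT} as stated.
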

\begin{proof}
First, notice that 
\[
\A-\A^\ttop = \left( 
\begin{array}{c|c}
    & \Atwo - \Athree^\ttop\\
   \hline
   \mybox{\Athree - \Atwo^\ttop} & 
\end{array}\right).
\]
If we multiply \eqref{eq:cohoA2} by $\la$  and subtract 
the transpose of $\eqref{eq:cohoA3}$, we obtain
\begin{equation*}
   \Lop^{-\la1}\left(\Atwo-\Athree^\ttop\right)=\hS_2\la-\hS_3^\ttop.
\end{equation*}
Similarly, we can multiply the transpose of \eqref{eq:cohoA2} by
$\la$ and subtract it to \eqref{eq:cohoA3} to obtain
\begin{equation*}
   \Lop^{-\la 1}\left(\Athree-\Atwo^\ttop\right)
   =\hS_3-\la\hS_2^\ttop.
\end{equation*}
Therefore
\begin{align*}
   \norm{\Atwo-\Athree^\ttop}_{\rho-2\de}&\leq
   \frac{1}{1-\abs{\la}}\norm{\hS_2\la-\hS_3^\ttop}_{\rho-2\de}\leq
   \frac{1}{1-\abs{\la}}\norm{\hS\La^\ttop-\La\hS^\ttop}_{\rho-2\de},\\
   \norm{\Athree-\Atwo^\ttop}_{\rho-2\de}&\leq
   \frac{1}{1-\abs{\la}}\norm{\hS_3-\la\hS_2^\ttop}_{\rho-2\de}\leq
   \frac{1}{1-\abs{\la}}\norm{\hS\La^\ttop-\La\hS^\ttop}_{\rho-2\de},
\end{align*}
from where we obtain the estimate 
\begin{equation*}
   \norm{\EsymA}_{\rho-2\de}\leq
   \frac{1}{1-\abs{\la}}\norm{\hS\La^\ttop-\La\hS^\ttop}_{\rho-2\de}
   \leq\frac{1}{1-\sigmala}\norm{\EsyminvLahS}_{\rho-2\de},
\end{equation*}
where we used that $\norm{\La}$=1.
We can now use Lemma \ref{lemma:esthSsym} and obtain
\eqref{eq:estA-AT}.

\end{proof}

\subsection{Approximate symplecticity of the frame
$\P$}\label{sec:ap_symp}
Let $\Esym:\TT^{d+\ell}\to\RR^{2n\times2 n}$ denote the error in the
symplecticity of $\P$. That is, 
\begin{equation}\label{eq:defEsym}
\Esym:=\P^\ttop\OmK \P - \OmO.  
\end{equation}
\begin{lemma}\label{lemma:ap_sym}
For any $\de\in(0,\rho/2)$ the symplecticity error $\Esym$ 
satisfies
\begin{equation}\label{eq:estEsym}
   \norm{\Esym}_{\rho-2\de}\leq
   \frac{\CEsymK}{\ga\de^{\tau+1}}\norm{\EK}_\rho
   + \CEsymW\norm{\EW}_\rho.
\end{equation}
\end{lemma}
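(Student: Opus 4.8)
The plan is to reduce the symplecticity defect of $\P$ to the two errors already estimated in this section: the symplecticity defect $\hEsym$ of $\hP$ (Lemma \ref{lemma:ap_sym_hP}) and the symmetry defect $\EsymA$ of $\A$ (Lemma \ref{lemma:estA-AT}). Write $T$ for the block-triangular factor in \eqref{eq:transformhP}, so that $\P=\hP T$. First I would compute, using \eqref{eq:defhEsym},
\[
\P^\ttop\OmK\P = T^\ttop\big(\hP^\ttop\OmK\hP\big)T = T^\ttop\OmO T + T^\ttop\hEsym T .
\]
A direct block multiplication, with the sign convention of $\OmO$, gives
\[
T^\ttop\OmO T = \OmO + \left(\begin{array}{c|c}&\\\hline & \mybox{\A-\A^\ttop}\end{array}\right),
\]
so that, subtracting $\OmO$,
\[
\Esym = \left(\begin{array}{c|c}&\\\hline & \mybox{\EsymA}\end{array}\right) + T^\ttop\hEsym T .
\]
This is the key identity: the symplecticity defect of $\P$ equals the symmetry defect of $\A$ plus a congruence transform of the symplecticity defect of $\hP$.

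Next I would bound each term. With the matrix norm of Section \ref{sec:analytic setting} one has $\norm{T}_\rho \le 1+\norm{\A}_\rho$ and $\norm{T^\ttop}_\rho \le 1+\norm{\AT}_\rho$, and these are bounded by the error-independent constants $1+\CA$ and $1+\CAT$ by \eqref{eq:estA} and \eqref{eq:estAT}. Hence, for $\de\in(0,\rho/2)$,
\[
\norm{\Esym}_{\rho-2\de} \le \norm{\EsymA}_{\rho-2\de} + (1+\CAT)(1+\CA)\,\norm{\hEsym}_{\rho-2\de}.
\]
Applying Lemma \ref{lemma:estA-AT} to the first term and Lemma \ref{lemma:ap_sym_hP} to the second --- both of which provide bounds of the shape $\tfrac{C^\K}{\ga\de^{\tau+1}}\norm{\EK}_\rho + C^\W\norm{\EW}_\rho$ --- and collecting, one obtains \eqref{eq:estEsym} with
\[
\CEsymK := \CEsymAK + (1+\CAT)(1+\CA)\,\ChEsymK,\qquad \CEsymW := \CEsymAW + (1+\CAT)(1+\CA)\,\ChEsymW .
\]

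This lemma is essentially bookkeeping once the identity $\Esym = \mathrm{diag}(0,\EsymA) + T^\ttop\hEsym T$ is established; the only points requiring care are the block computation of $T^\ttop\OmO T$ (getting the signs right so that the off-diagonal blocks cancel and only $\A-\A^\ttop$ survives), and the observation that the congruence factors $\norm{T}_\rho,\norm{T^\ttop}_\rho$ are controlled by a priori constants rather than by quantities that shrink with the error --- this uses that $\A$ is built from $\hS$ via the non-small-divisor equations \eqref{eq:cohoA2coupled}--\eqref{eq:cohoA4}, so that the $H_2$-bounds propagate to $\CA,\CAT$. I would also recall that the standing assumptions \eqref{eq:ass_EK} and \eqref{eq:ass_hEsym} are in force, the latter being what makes Lemma \ref{lemma:estA-AT} applicable (through Lemmas \ref{lemma:esthSsym} and \ref{lemma:ap_inv_hP}).
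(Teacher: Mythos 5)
Your proposal is correct and follows essentially the same route as the paper: the identity $\Esym = T^\ttop\hEsym T + \mathrm{diag}(0,\A-\A^\ttop)$ is exactly the decomposition used there, and the resulting constants $\CEsymK=(1+\CAT)(1+\CA)\ChEsymK+\CEsymAK$ and $\CEsymW=(1+\CAT)(1+\CA)\ChEsymW+\CEsymAW$ agree with those in the paper's tables.
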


\begin{proof}
From definition \eqref{eq:defEsym} and transformation
\eqref{eq:transformhP} we obtain
\begin{equation*}
\Esym= \left(\begin{array}{c|c} I_n & \\\hline \mybox{\AT} &
   I_n\end{array}\right) \hEsym
\left(\begin{array}{c|c} I_n & \A\\\hline &
I_n\end{array}\right)+
\left(\begin{array}{c|c}  & \\\hline & \mybox{\A-\AT}\end{array}\right).
\end{equation*}
Consequently, 
\begin{align*}
   \norm{\Esym}_{\rho-2\de}&\leq
   \left(1+\norm{\AT}_\rho\right)\left(1+\norm{\A}_\rho\right)
   \norm{\hEsym}_{\rho-2\de} + \norm{\EsymA}_{\rho-2\de}\\
   &\leq \left(1+\CAT\right)\left(1+\CA\right)
    \norm{\hEsym}_{\rho-2\de} + \norm{\EsymA}_{\rho-2\de}.
\end{align*}
Then, using Lemmas \ref{lemma:ap_sym_hP} and \ref{lemma:estA-AT},
result \eqref{eq:estEsym} is a direct computation.
\end{proof}

\subsection{Approximate reducibility of the frame $\P$}\label{sec:ap_red}
Let us now define the error in the reducibility of the linearized
dynamics in the coordinates given by the frame $\P$ as
$\Ered:\TT^{d+\ell}\to\RR^{2n\times 2n}$ given by 
\begin{equation}\label{eq:defEred}
\Ered:= - \OmO  \P^\ttop\comp\Romal\OmbarK \DzphiK
\P - \left(
\begin{array}{c|c}
   \La & \tS\\
   \hline
       & \mybox{\La^{-\ttop}}
\end{array}
\right).
\end{equation}
\begin{lemma}\label{lemma:ap_red}
For any $\de\in(0,\rho/2)$ the reducibility error $\Ered$
satisfies
\begin{equation}\label{eq:estEred}
   \norm{\Ered}_{\rho-2\de}\leq
   \frac{\CEredK}{\ga\de^{\tau+1}}\norm{\EK}_\rho +
   \CEredW \norm{\EW}_\rho.
\end{equation}
\end{lemma}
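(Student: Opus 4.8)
The plan is to reduce the reducibility error of the frame $\P$ to that of the frame $\hP$, which is already controlled by Lemma~\ref{lemma:ap_red_hP}, by exploiting the triangular change of frame \eqref{eq:transformhP}. For a matrix-valued map $V:\TT^{d+\ell}\to\RR^{n\times n}$ write $\Xi_V:=\left(\begin{array}{c|c} I_n & V\\\hline & I_n\end{array}\right)$, so that \eqref{eq:transformhP} reads $\P=\hP\,\Xi_{\A}$. First I would record the elementary block identity
\[
   -\OmO\,\big(\Xi_{\A}^{\ttop}\comp\Romal\big)=\big(\Xi_{-\A^\ttop}\comp\Romal\big)\,\big(-\OmO\big),
\]
which holds for \emph{any} $\A$. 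Using $\P^\ttop\comp\Romal=(\Xi_{\A}^{\ttop}\comp\Romal)(\hP^\ttop\comp\Romal)$ together with this identity and the definition \eqref{eq:defhEred} of $\hEred$, one obtains
\[
   -\OmO\,\P^\ttop\comp\Romal\,\OmbarK\,\DzphiK\,\P
   =\big(\Xi_{-\A^\ttop}\comp\Romal\big)\big(\hLa+\hEred\big)\,\Xi_{\A}.
\]

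Next I would observe that, by \eqref{eq:tS-hS}---which is precisely the identity satisfied by $\tS$ once the blocks $\A_1=\cO_{n-1},\A_2,\A_3,\A_4$ of $\A$ solve the cohomological equations \eqref{eq:cohoA1}--\eqref{eq:cohoA4} exactly---a direct block computation gives
\[
   \big(\Xi_{-\A^\ttop}\comp\Romal\big)\,\hLa\,\Xi_{\A}
   =\left(\begin{array}{c|c} \La & \La\A+\hS-\A^\ttop\comp\Romal\La^{-\ttop}\\\hline & \mybox{\La^{-\ttop}}\end{array}\right)=\tLa .
\]
Subtracting $\tLa$ and recalling the definition \eqref{eq:defEred} of $\Ered$, the contributions of $\hLa$ cancel identically and one is left with the closed expression
\[
   \Ered=\big(\Xi_{-\A^\ttop}\comp\Romal\big)\,\hEred\,\Xi_{\A}.
\]
This is the analogue for $\Ered$ of the formula derived for $\Esym$ in the proof of Lemma~\ref{lemma:ap_sym}; the essential point is that here, unlike in the symplecticity computation, no term measuring the failure of symmetry of $\A$ (i.e.\ no $\EsymA=\A-\A^\ttop$) survives, because the upper-triangular structure of $\Xi_{\A}$ is preserved independently of the symmetry of $\A$ --- that symmetry was only needed to make $\P$ itself symplectic.

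Finally, the estimate follows at once. Since $\norm{\Xi_{\A}}_\rho\leq 1+\norm{\A}_\rho<1+\CA$ by \eqref{eq:estA}, and $\norm{\Xi_{-\A^\ttop}\comp\Romal}_\rho\leq 1+\norm{\A^\ttop}_\rho<1+\CAT$ by \eqref{eq:estAT}, we get
\[
   \norm{\Ered}_{\rho-2\de}\leq(1+\CAT)(1+\CA)\,\norm{\hEred}_{\rho-2\de},
\]
and substituting the bound \eqref{eq:esthEred} of Lemma~\ref{lemma:ap_red_hP} yields \eqref{eq:estEred} with $\CEredK:=(1+\CAT)(1+\CA)\,\ChEredK$ and $\CEredW:=(1+\CAT)(1+\CA)\,\ChEredW$. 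I do not expect a genuine obstacle in this argument: the only care needed is in verifying the two block identities above and in checking that the object $\tS$ appearing in \eqref{eq:defEred} is indeed the reduced torsion produced by the exact solution of the $\A$-equations, which is exactly what makes the cancellation against $\hLa$ exact and forces $\Ered$ to inherit the size of $\hEred$ linearly (rather than incurring an extra, merely quadratically small, symmetry term).
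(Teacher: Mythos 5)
Your proposal is correct and follows essentially the same route as the paper: the paper's proof consists precisely of the conjugation identity $\Ered=\bigl(\begin{smallmatrix} I_n & -\AT\comp\Romal\\ & I_n\end{smallmatrix}\bigr)\hEred\bigl(\begin{smallmatrix} I_n & \A\\ & I_n\end{smallmatrix}\bigr)$ (stated there with the one-line justification ``$\A$ solves \eqref{eq:tS-hS}'') followed by the bound $(1+\CAT)(1+\CA)\norm{\hEred}_{\rho-2\de}$ and Lemma \ref{lemma:ap_red_hP}. Your derivation simply fills in the block computations the paper omits, and your observation that no $\EsymA$ term survives here, in contrast to Lemma \ref{lemma:ap_sym}, is accurate.
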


\begin{proof}
As in the proof of Lemma \ref{lemma:ap_sym}, we use definition
\eqref{eq:defEred} and transformation \eqref{eq:transformhP} to
obtain
\begin{equation*}
\Ered=
\left(\begin{array}{c|c} I_n & -\AT\comp\Romal\\\hline  &
   I_n\end{array}\right)
   \hEred
\left(\begin{array}{c|c} I_n & \A\\\hline  &
   I_n\end{array}\right),
\end{equation*}
where we used that $\A$ solves \eqref{eq:tS-hS}.
Then, 
\begin{align*}
   \norm{\Ered}_{\rho-2\de}&\leq  \left(1+\norm{\AT}_\rho\right)
   \left(1+\norm{\A}_{\rho-2\de}\right)\norm{\hEred}_{\rho-2\de}\\
   &\left(1+\CAT\right)\left(1+\CA\right)\norm{\hEred}_{\rho-2\de},
\end{align*}
from where we obtain \eqref{eq:estEred}.
\end{proof}

\section{Torus and bundle corrections}\label{sec:tori bundle
correction}
In this section, we provide two lemmas to control the error in
the torus and bundle parameterizations after one step of the
iterative scheme. The lemmas rely on the bounds for the
approximate geometric properties obtained in Section
\ref{sec:some lemmas}.
\subsection{The linearized equation for the torus}\label{sec:lin-eqK}
Let us now inspect the new error in the invariance equation after the
iterative step of Section \ref{sec:quasi-Newton}, i.e., after choosing $\xiK$ that solves
\eqref{eq:corrtr1}-\eqref{eq:corrtr4}. A straightforward calculation reveals
\begin{align}
   \EKnew:=&\phi_T\comp(\K + \DeK,\id) -
   \K\comp\Romal- \DeK\comp\Romal\label{eq:Enew1}\\
   =& \EK+ \DzphiK \DeK - \De
   \K\comp\Romal + \DeDephiT\nonumber,
\end{align}
where
\begin{equation*}
   \DeDephiT:=\int_0^1(1-s)\Dif^2_z\phi_T\comp(\K+s\DeK,\id)
   \left[\DeK,\DeK\right]ds
\end{equation*}
is only well-defined in $\tU$ for sufficiently small $\DeK$.
Similarly as in Section \ref{sec:invL}, we can guarantee so if
\begin{equation}\label{eq:ass_DeK}
   \frac{\norm{\DeK}_{\rho-2\de}}{R-\norm{\K-\K_0}_\rho}<1.
\end{equation}
Also, we will need $I_{2n}-\OmO\Esym$ to be invertible in
$\TT^{d+\ell}_{\rho-2\de}$. This condition is granted if 
\begin{equation}\label{eq:ass_Esym}
   \norm{\Esym}_{\rho-2\de}\leq\nu<1
\end{equation}
holds---recall the analogous assumption \eqref{eq:ass_hEsym} in
Section \ref{sec:invhP}. For the remaining of this section, we
incorporate \eqref{eq:ass_DeK} and \eqref{eq:ass_Esym} to our
assumptions.
\begin{lemma}\label{lemma:EKnew}
The new error in the torus invariance equation after correcting
$K$ by solving
\eqref{eq:lineq2} is given by 
\begin{equation}\label{eq:Enew2}
   \EKnew=\P\comp\Romal\Big(I_{2n}-\OmO \Esym\comp\Romal\Big)^{-1}\ElinK
   + \DeDephiT,
\end{equation}
where 
\begin{equation*}
   \ElinK:=\Ered\xiK +
   \OmO \Esym\comp\Romal\xiK\comp\Romal -\begin{pmatrix} 0_{n\times1} \\
   \aver{\etacK}\\ 0_{1\times1} \end{pmatrix}. 
\end{equation*}
Additionally, for any $\de\in(0,\rho/2)$
\begin{equation}\label{eq:estEKnew}
   \norm{\EKnew}_{\rho-2\de}\leq \frac{\CEKKK}{\ga^4\de^{4\tau}}\norm{\EK}_\rho^2 +
    \frac{\CEKKW}{\ga^2\de^{2\tau}}\norm{\EK}_\rho\norm{\EW}_\rho.
\end{equation}

\end{lemma}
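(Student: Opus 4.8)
The plan is to establish the two formulas for $\EKnew$ first (the algebraic identities \eqref{eq:Enew1} and \eqref{eq:Enew2}), and then to derive the quantitative bound \eqref{eq:estEKnew} by plugging in the estimates obtained in Sections \ref{sec:some lemmas} and \ref{sec:quasi-Newton}. For the first formula, I would start from the definition of $\EKnew$ and Taylor expand $\phi_T\comp(\K+\DeK,\id)$ around $\K$ to second order with integral remainder, which gives exactly \eqref{eq:Enew1} with $\DeDephiT$ the quadratic remainder; the assumption \eqref{eq:ass_DeK} guarantees the remainder integral stays inside $\tU$. For the second formula, I would substitute $\DeK=\P\xiK$, use that $\P$ is approximately symplectic so that $\DzphiK\P$ is approximately $\P\comp\Romal\tLa$ up to the reducibility error $\Ered$ (recall \eqref{eq:defEred}), and use $-\OmO\P^\ttop\comp\Romal\OmbarK$ as an approximate left inverse of $\P\comp\Romal$ with error governed by $\Esym$ via \eqref{eq:invP} and \eqref{eq:defEsym}. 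Collecting terms, the linear part $\EK + \DzphiK\P\xiK - \P\comp\Romal\xiK\comp\Romal$ should reduce — after left-multiplying by $\P\comp\Romal(I_{2n}-\OmO\Esym\comp\Romal)^{-1}$ and using that $\xiK$ solves \eqref{eq:lineq2} up to the subtracted average $\aver{\etacK}$ — to the expression $\P\comp\Romal(I_{2n}-\OmO\Esym\comp\Romal)^{-1}\ElinK$, where $\ElinK$ collects precisely the three error contributions: the reducibility defect $\Ered\xiK$, the symplecticity defect $\OmO\Esym\comp\Romal\xiK\comp\Romal$, and the average correction $(0,\aver{\etacK},0)^\ttop$.

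The core of the estimate \eqref{eq:estEKnew} is then to see that each of the three pieces of $\ElinK$ and also $\DeDephiT$ is quadratically small in $\E$. First I would bound $\xiK$: from the formulas \eqref{eq:xiK2}--\eqref{eq:xiK1} together with the definition $\etaK=\OmO\P^\ttop\comp\Romal\OmbarK\EK$, the Rüssmann estimates \eqref{eq:estR1}, \eqref{eq:estR2}, and the frame bounds from Section \ref{sec:frames control}, one gets $\norm{\xiK}_{\rho-\de}\lesssim \frac{1}{\ga^2\de^{2\tau}}\norm{\EK}_\rho$ (the $\ga^2\de^{2\tau}$ because $\xiaK$ involves two nested small-divisor solves through $\S\Rop\etacK$, and likewise $\aver{\xicK}$). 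Then $\Ered\xiK$ is bounded using Lemma \ref{lemma:ap_red}, which gives $\norm{\Ered}_{\rho-2\de}\lesssim\frac{1}{\ga\de^{\tau+1}}\norm{\EK}_\rho+\norm{\EW}_\rho$; multiplying the two yields a term of the claimed shape $\frac{\norm{\EK}_\rho^2}{\ga^4\de^{4\tau}}$ plus a cross term $\frac{\norm{\EK}_\rho\norm{\EW}_\rho}{\ga^2\de^{2\tau}}$ (absorbing powers of $\de$ and $\ga$ as needed, since $\ga\de^\tau$ can be bounded). The symplecticity term $\OmO\Esym\comp\Romal\xiK\comp\Romal$ is handled the same way with Lemma \ref{lemma:ap_sym}. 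The average term requires the lemma on quadratically small averages from \cite{FMHM24}: $\aver{\etacK}$ is quadratic in the errors, contributing the right order. Finally $\DeDephiT$ is bounded by $\tfrac12\cteDDzphiT\norm{\DeK}_{\rho-2\de}^2\le\tfrac12\cteDDzphiT\CP^2\norm{\xiK}_{\rho-2\de}^2$, which is again of order $\frac{\norm{\EK}_\rho^2}{\ga^4\de^{4\tau}}$.

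The main obstacle I anticipate is the bookkeeping of the analyticity strip losses and the careful matching of powers of $\ga$ and $\de$ so that everything collapses into the two-term form of \eqref{eq:estEKnew}. Each small-divisor solve costs a factor $\de$ in the width and a factor $\ga^{-1}\de^{-\tau}$ in the norm, and one must verify that the nested solve in $\xiaK$ (through $\S\Rop\etacK$ and $\aver{\xicK}=\aver{\S}^{-1}\langle\etaaK-\S\Rop\etacK\rangle$) does not cost more than $\de^{-2\tau}$ in total, while the derivative $\Dif_\te\EK$ appearing inside $\EL$ (used by Lemmas \ref{lemma:ap_red}, \ref{lemma:ap_sym}) costs an extra $\de^{-1}$. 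Organizing the strip so that $\xiK$ lives in $\TT^{d+\ell}_{\rho-\de}$, $\Ered$ and $\Esym$ in $\TT^{d+\ell}_{\rho-2\de}$, and the final product in $\TT^{d+\ell}_{\rho-2\de}$ — rather than needing $\rho-3\de$ — requires care; I would arrange the solves so the nested $\Rop$-solve for $\xiaK$ uses two sub-steps of width $\de/2$ each, keeping the total loss at $\de$. Once the strip accounting is fixed, assembling the explicit constants $\CEKKK$ and $\CEKKW$ from the constants of the cited lemmas is routine, and the assumptions \eqref{eq:ass_EK}, \eqref{eq:ass_DeK}, \eqref{eq:ass_Esym} are exactly what is needed to make each step legitimate.
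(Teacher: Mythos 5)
Your proposal follows essentially the same route as the paper: the algebraic identity comes from left-multiplying $\EK+\DzphiK\P\xiK-\P\comp\Romal\xiK\comp\Romal$ by the approximate inverse $-\OmO\P^\ttop\comp\Romal\OmbarK$ and its inverse, using \eqref{eq:defEsym}, \eqref{eq:defEred}, the cohomological equations for $\xiK$, and \eqref{eq:inv-property}; the estimate then combines the $\ga^{-2}\de^{-2\tau}$ bound on $\xiK$, Lemmas \ref{lemma:ap_sym} and \ref{lemma:ap_red}, the quadratically small average of $\etacK$, and the quadratic bound on $\DeDephiT$, exactly as in the paper. The only (immaterial) difference is your half-width sub-step device for the nested Rüssmann solve: the paper simply lets $\xiaK$ live on $\TT^{d+\ell}_{\rho-2\de}$, which already matches the strip on which $\Ered$ and $\Esym$ are controlled, so no third bite is needed.
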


\begin{proof}
Let us begin by inspecting the sum
\[
\EK + \DzphiK\DeK - \DeK\comp\Romal.
\]
If we left-multiply it by $-\OmO\P^\ttop\comp\Romal\OmbarK$ and by
its inverse, we obtain 
\begin{align*}
&\EK + \DzphiK\DeK - \DeK\comp\Romal\\
&=\left(-\OmO\P^\ttop\comp\Romal\OmbarK\right)^{-1}
\Big(-\etaK + \tLa\xiK + \Ered\xiK
   -\big(I_{2n}-\OmO \Esym\comp\Romal\big) \xiK\comp\Romal \Big)\\
&=\left(-\OmO\P^\ttop\comp\Romal\OmbarK\right)^{-1}
\left(\Ered\xiK + \OmO \Esym\comp\Romal\xiK\comp\Romal -\begin{pmatrix} 0_{n\times1} \\
   \aver{\etacK}\\ 0_{1\times1} \end{pmatrix}
   \right)\\
&=\left(-\OmO\P^\ttop\comp\Romal\OmbarK\right)^{-1}
\ElinK,
\end{align*}
where we used definitions \eqref{eq:defEsym}, \eqref{eq:defEred},
and that $\xiK$ satisfies \eqref{eq:corrtr1}-\eqref{eq:corrtr4}.
Recall that by assumption $\norm{\Esym}_{\rho-2\de}\leq\nu<1$ so
$I_{2n}-\OmO\Esym\comp\Romal$ is invertible.
Then, from the previous expression, that 
\begin{equation}\label{eq:inv-property}
   \Big(-\OmO  P^\ttop\comp\Romal\OmbarK \Big)^{-1} =
   P\comp\Romal\Big(I_{2n}-\OmO \Esym\comp\Romal\Big)^{-1},
\end{equation}
and \eqref{eq:Enew1}, we obtain \eqref{eq:Enew2}. 
For estimate \eqref{eq:estEKnew}, we first 
obtain estimates for each $\etaK^i$ as 
\begin{align}\label{eq:estetaK1}
   \norm{\etaaK}_\rho & \leq
   \CNT\cteOm\norm{\EK}_\rho\leq\CetaoneK\norm{\EK}_\rho,\\\label{eq:estetaK2}
   \norm{\etabK}_\rho & \leq
   \CNT\cteOm\norm{\EK}_\rho\leq\CetatwoK\norm{\EK}_\rho,\\\label{eq:estetaK3}
   \norm{\etacK}_\rho & \leq \Max{\sigmaDteKT,\
   \CcXT}\cteOm\norm{\EK}_\rho\leq\CetathreeK\norm{\EK}_\rho,\\\label{eq:estetaK4}
   \norm{\etadK}_\rho & \leq
   \sigmaWT\cteOm\norm{\EK}_\rho\leq\CetafourK\norm{\EK}_\rho,
\end{align}
and for each $\xiK^i$ we obtain the following estimates from
\eqref{eq:xiK2}-\eqref{eq:xiK1} 
\begin{align}\label{eq:estaverxiK3}
   &\abs{\aver{\xicK}} \leq\sigmainvaverS\left(\CetaoneK\ga\de^\tau
   + \ChS\CR\CetathreeK \right)
   \frac{\norm{\EK}_\rho}{\ga\de^\tau}\leq
   \frac{\CaverxithreeK}{\ga\de^{\tau}}\norm{\EK}_\rho,\\\label{eq:estxiK1}
   &\norm{\xiaK}_{\rho-2\de}\leq\CR\left(\CetaoneK\ga\de^\tau +
         \ChS\left(\CR\CetathreeK +
      \CaverxithreeK\right)\right)\frac{\norm{\EK}_\rho}
      {\ga^2\de^{2\tau}}\leq\frac{\CxioneK}{\ga^2\de^{2\tau}}
      \norm{\EK}_\rho, \\\label{eq:estxiK2}
   &\norm{\xibK}_{\rho}\leq
   \frac{1}{1-\sigmala}\CetatwoK\norm{\EK}_\rho\leq
      \CxitwoK\norm{\EK}_\rho,\\\label{eq:estxiK3}
   &   \norm{\xicK}_{\rho-\de}\leq\left(\CR\CetathreeK +
      \CaverxithreeK\right)\frac{\norm{\EK}_\rho}{\ga\de^{\tau}}\leq
      \frac{\CxithreeK}{\ga\de^{\tau}}\norm{\EK}_\rho,
      \\\label{eq:estxiK4}
   & \norm{\xidK}_{\rho}\leq
   \frac{\sigmala}{1-\sigmala}\CetafourK\norm{\EK}_\rho  \leq
         \CxifourK\norm{\EK}_\rho.
\end{align}
Therefore,
\begin{equation}\label{eq:estxiK}
   \norm{\xiK}_{\rho-2\de}\leq\Max{\CxioneK,\
      \CxitwoK\ga^2\de^{2\tau},\
      \CxithreeK\ga\de^\tau,\
   \CxifourK\ga^2\de^{2\tau}}\frac{\norm{\EK}_\rho}{\ga^2\de^{2\tau}}
   \leq\frac{\CxiK}{\ga^2\de^{2\tau}}\norm{\EK}_\rho,
\end{equation}
and, for $\DeK = P\xiK$, we have the estimate
\begin{align}\label{eq:estDeK}
   \norm{\DeK}_{\rho-2\de}&\leq\Big(\CL\Max{\CxioneK,\
      \CxitwoK\ga^2\de^{2\tau}} + \CN\Max{\CxithreeK\ga\de^\tau,\
   \CxifourK\ga^2\de^{2\tau}}\Big)
      \frac{\norm{\EK}_{\rho}}{\ga^2\de^{2\tau}}\\\nonumber
      &\leq \frac{\CDeK}{\ga^2\de^{2\tau}}\norm{\EK}_{\rho}.
\end{align}
In order to obtain estimates for $\abs{\aver{\etacK}}$, 
let 
\begin{equation*}
  \etacK =
     \begin{pmatrix}
        (\Dif_\te \K\comp\Romal)^\ttop \OmbarK \EK
     \\
     \cX^\ttop\comp\Romal \OmbarK \EK
    \end{pmatrix} =:
    \begin{pmatrix}
    \etacaK    \\
    \etacbK
    \end{pmatrix}.
\end{equation*}
We now make use of the following lemma from \cite{FMHM24},
\begin{lemma}
The averages of $\etaK^{31}$ and $\etaK^{32}$ are 
\begin{align*}
\langle\etaK^{31}\rangle &=
\langle(\Dif_\te \EK)^\ttop \Delta^1 a+ (\Dif_\te
\K\comp\Romal)^\ttop
\De^2 a\rangle,\\
\langle\etaK^{32}\rangle &=\langle \Delta^2 H\rangle-
\langle (\Delta^1 a)^\ttop
\Dif_\vp \EK\hatal\rangle - \langle(\De^2 a)^\ttop
\Dif_\vp \K\comp\Romal \hatal\rangle 
 ,
\end{align*}
where the Taylor remainders $\Delta^i$ of order $i$ in $E^K$ are given by 
\begin{align*}
\Delta^1 a:=& 
a\comp\phi_T\comp(\K,\id) -a\comp\K\comp\Romal \\
=&\int_{0}^1 \Dif a\comp\big(\K\comp\Romal
+s \EK\big)\EK\ ds,\\
\Delta^2 a:=& 
a\comp\phi_T\comp(\K,\id) - a\comp\K\comp\Romal -
\Dif a\comp\K\comp\Romal \EK   \\
=&\int_{0}^1 (1-s)\Dif^2 a\comp\big(\K\comp\Romal+s \EK\big)
[\EK,\EK]\ ds, \\
\Delta^2 H:=&
H\comp\big(\phi_T\comp(\K,\id),\Ral\big)
-H\comp(\K\comp\Romal,\Ral\big) 
- \Dif_z H\comp(\K\comp\Romal,\Ral)\EK   \\
=&\int_0^1(1-s)\Dif^2_z H\comp\big(\K\comp\Romal +
s\EK,\Ral\big)[\EK,\EK]\ ds.
\end{align*}
\end{lemma}
Since---by assumption---the integral Taylor remainders are well
defined, we can obtain the estimates 
\begin{align}\label{eq:estavereta31}
   \abs{\aver{\etacaK}} &\leq\left(2n\cteDa +
   \sigmaDteKT\cteDDa\frac{\de}{2}\right)\frac{\norm{\EK}_\rho^2}{\de}
   \leq\frac{\CaveretathreeoneK}{\de}\norm{\EK}_\rho^2, \\
   \label{eq:estavereta32}
   \abs{\aver{\etacbK}}&\leq\left(2n\ell\cteDaT\abs{\hatal}
   + (\cteDDzH + 2n\ell\cteDDa\sigmaDvpK\abs{\hatal})
   \frac{\de}{2}\right)
   \frac{\norm{\EK}_\rho^2}{\de}
   \leq\frac{\CaveretathreetwoK}{\de}\norm{\EK}_\rho^2,
\end{align}
and
\begin{equation}\label{eq:estavereta3}
   \abs{\aver{\etacK}}\leq\Max{\CaveretathreeoneK,\
   \CaveretathreetwoK}\frac{\norm{\EK}_\rho^2}{\de}
   \leq\frac{\CaveretathreeK}{\de}\norm{\EK}_\rho^2.
\end{equation}
To control $\Esym$, we will use Lemma \ref{lemma:ap_sym} which
applies if $\norm{\hEsym}_{\rho-2\de}\leq\hnu<1$; which recall is
one of our assumptions. Therefore, using 
Lemmas \ref{lemma:ap_sym} and \ref{lemma:ap_red}, we obtain
\begin{align}\label{eq:estElinK}
\norm{\ElinK}_{\rho-2\de}&\leq\Big(\left(\CEredK +
   \CEsymK\right)\CxiK +
\CaveretathreeK\ga^3\de^{3\tau}\Big)
\frac{\norm{\EK}_\rho^2}{\ga^3\de^{3\tau+1}}\\\nonumber
&\phantom{\leq}+\left(\CEredW+\CEsymW\right)\CxiK
\frac{\norm{\EK}_\rho\norm{\EW}_\rho}{\ga^2\de^{2\tau}}\\\nonumber
&\leq\frac{\CElinKK}{\ga^3\de^{3\tau+1}}\norm{\EK}_\rho^2
+\frac{\CElinKKW}{\ga^2\de^{2\tau}}\norm{\EK}_\rho\norm{\EW}_\rho.
\end{align}
Observe that using Neumann series,
\begin{equation}\label{eq:neumann}
   \norm{\left(I_{2n}-\OmO \Esym\right)^{-1}}_{\rho-2\de}
   \leq\frac{1}{1-\nu},
\end{equation}
Also, note that $\DeDephiT$ is well-defined by assumption
\eqref{eq:ass_DeK}. Hence, 
\begin{equation*}
   \norm{\DeDephiT}_{\rho-2\de}\leq \frac{1}{2}\cteDDzphiT\norm{\De
   K}_{\rho-2\de}^2\leq\frac{1}{2}\cteDDzphiT(\CDeK)^2\frac{\norm{\EK}^2_\rho}{\ga^4\de^{4\tau}}.
\end{equation*}
We now have all the estimates to obtain 
\begin{align}\label{eq:estEKnew2}
\norm{\EKnew}_{\rho-2\de}&\leq \frac{\CP}{1-\nu}
\left(\frac{\CElinKK}{\ga^3\de^{3\tau+1}}\norm{\EK}_\rho^2 +
\frac{\CElinKKW}{\ga^2\de^{2\tau}}\norm{\EK}_\rho\norm{\EW}_\rho\right) +
\frac{\cteDDzphiT(\CDeK)^2}{2\ga^4\de^{4\tau}}\norm{\EK}_\rho^2\\\nonumber
&\leq\left(\frac{\CP}{1-\nu}\CElinKK\ga\de^{\tau-1}+\frac{1}{2}\cteDDzphiT\CDeK^2\right)\frac{\norm{\EK}_\rho^2}{\ga^4\de^{4\tau}}
+ \frac{\CP}{1-\nu}\CElinKKW\frac{\norm{\EK}_\rho
\norm{\EW}_\rho}{\ga^2\de^{2\tau}}\\\nonumber
&\leq\frac{\CEKKK}{\ga^4\de^{4\tau}}\norm{\EK}_\rho^2
+\frac{\CEKKW}{\ga^2\de^{2\tau}}\norm{\EK}_\rho\norm{\EW}_\rho.
\end{align}
\end{proof}

\subsection{The linearized equation for the bundle}
Let us now inspect the new error in the bundle invariance equation.
First recall definition \eqref{eq:tildeEW}. The new
error in the bundle invariance equation is the following
\begin{align}\label{eq:EWnew1}
   \EWnew:=& \DzphiT\comp(\K+\DeK,\id)(\W+\DeW)
   - \big(\W\comp\Romal+\DeW\comp\Romal\big)(\la+\Dela)\\\nonumber
   =&\tildeEW +
 \DzphiK\DeW-\DeW\comp\Romal\la -
 \W\comp\Romal\Dela\\\nonumber
    \phantom{=}&
    +\DeDzphiT[\DeW] - \DeW\comp\Romal\Dela
    ,
\end{align}
where, for any $\zeta:\TT^{d+\ell}\to\RR^{2n}$,
\begin{equation*}
   \DeDzphiT[\zeta]=
   \int_0^1\DDzphiT\comp(\K+ s\DeK,\id)
   [\DeK,\zeta]ds.
\end{equation*}

\begin{lemma}\label{lemma:EWnew}
The new error in the bundle invariance
equation after correcting $W$ by solving \eqref{eq:coho-bundle}
is given by
\begin{equation}\label{eq:EWnew2}
   \EWnew=
   \P\comp\Romal\big(I_{2n}-\OmO\Esym\comp\Romal\big)^{-1}\ElinW+
   \DeDzphiT[\DeW]- \DeW\comp\Romal\Dela,
\end{equation}
where 
\begin{equation*}
   \ElinW:=
\Ered\xiW +
\OmO\Esym\comp\Romal\big(\xiW\comp\Romal\la + e_n\Dela\big)
\end{equation*}
Additionally, for any $\de\in(0,\rho/3)$ 
\begin{equation}\label{eq:estEWnew}
   \norm{\EWnew}_{\rho-3\de}\leq\frac{\CEWKK}{\ga^5\de^{5\tau}}\norm{\EK}_\rho^2
   + \frac{\CEWWW}{\ga\de^\tau}\norm{\EW}_\rho^2+
    \frac{\CEWKW}{\ga^3\de^{3\tau}}\norm{\EK}_\rho\norm{\EW}_\rho.
\end{equation}

\end{lemma}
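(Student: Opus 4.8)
The plan is to follow the template of Lemma~\ref{lemma:EKnew}, accounting for the three features specific to the bundle step: the right-hand side $\tildeEW$ of \eqref{eq:EWnew1} already carries a piece quadratic in $\DeK$, so it mixes $\EK$ and $\EW$; the scalar unknown $\Dela$ enters the linearized cohomological system; and the term $\W\comp\Romal\Dela$ must be rewritten via $\W=\P e_n$ (the column $\W$ is the $n$-th column of $\L$, hence of $\P$, since $d+2=n$).

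First I would split \eqref{eq:EWnew1} as the linear part $\tildeEW+\DzphiK\DeW-\DeW\comp\Romal\la-\W\comp\Romal\Dela$ plus the remainder $\DeDzphiT[\DeW]-\DeW\comp\Romal\Dela$. Substituting $\DeW=\P\xiW$ into the linear part and left-multiplying by $-\OmO\P^\ttop\comp\Romal\OmbarK$, I would use: (i) $-\OmO\P^\ttop\comp\Romal\OmbarK\,\DzphiK\,\P=\tLa+\Ered$ from \eqref{eq:defEred}; (ii) $-\OmO\P^\ttop\comp\Romal\OmbarK\,\P\comp\Romal=I_{2n}-\OmO\Esym\comp\Romal$ from \eqref{eq:defEsym}, which combined with $\W=\P e_n$ gives $-\OmO\P^\ttop\comp\Romal\OmbarK\,\W\comp\Romal=(I_{2n}-\OmO\Esym\comp\Romal)e_n$; and (iii) the cohomological system \eqref{eq:coho-bundle} with $\Dela=-\aver{\etabW}$, that is $\tLa\xiW-\xiW\comp\Romal\la=\etaW+e_n\Dela$. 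The $\mp\etaW$ and $\mp e_n\Dela$ contributions cancel and what survives is exactly $\ElinW=\Ered\xiW+\OmO\Esym\comp\Romal(\xiW\comp\Romal\la+e_n\Dela)$. Multiplying back by the inverse $\P\comp\Romal(I_{2n}-\OmO\Esym\comp\Romal)^{-1}$ via \eqref{eq:inv-property} and restoring the remainder gives \eqref{eq:EWnew2}; invertibility of $I_{2n}-\OmO\Esym\comp\Romal$ on $\TT^{d+\ell}_{\rho-2\de}$ is granted by \eqref{eq:ass_Esym}.

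For the estimate \eqref{eq:estEWnew} I would proceed bottom-up. From \eqref{eq:tildeEW} and \eqref{eq:estDeK}, $\norm{\tildeEW}_{\rho-2\de}\leq\norm{\EW}_\rho+\cteDDzphiT\,\sigmaW\,\norm{\DeK}_{\rho-2\de}$, which is $O(\norm{\EW}_\rho)$ plus $O\!\big(\norm{\EK}_\rho/(\ga^2\de^{2\tau})\big)$; this passes to component bounds for the four blocks of $\etaW$ on $\TT^{d+\ell}_{\rho-2\de}$, to $|\Dela|\leq\norm{\etabW}_{\rho-2\de}$, and---through the solution formulas \eqref{eq:xiW1}--\eqref{eq:xiW4} together with the R\"ussmann bounds \eqref{eq:estR1}--\eqref{eq:estR2}---to bounds for the blocks of $\xiW$. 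Only $\xibW=\tfrac{1}{\la}\Rop\etabW$ is a genuine small-divisor solution, so it alone costs an extra factor $\CR/(\ga\de^\tau)$ and is defined only on $\TT^{d+\ell}_{\rho-3\de}$; this fixes the analyticity loss and the restriction $\de\in(0,\rho/3)$. Having assembled $\norm{\xiW}_{\rho-3\de}$ and $\norm{\DeW}_{\rho-3\de}=\norm{\P\xiW}_{\rho-3\de}$, I would bound $\norm{\ElinW}_{\rho-3\de}$ using Lemmas~\ref{lemma:ap_sym} and~\ref{lemma:ap_red} (applicable since \eqref{eq:ass_Esym} and the underlying \eqref{eq:ass_hEsym} are in force), the Neumann estimate $\norm{(I_{2n}-\OmO\Esym\comp\Romal)^{-1}}_{\rho-3\de}\leq(1-\nu)^{-1}$ as in \eqref{eq:neumann}, and the quadratic remainders $\norm{\DeDzphiT[\DeW]}_{\rho-3\de}\leq\cteDDzphiT\,\norm{\DeK}_{\rho-2\de}\,\norm{\DeW}_{\rho-3\de}$ and $\norm{\DeW\comp\Romal\Dela}_{\rho-3\de}\leq\norm{\DeW}_{\rho-3\de}\,|\Dela|$, both well defined by \eqref{eq:ass_DeK}. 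Collecting terms according to their homogeneity in $(\EK,\EW)$ then produces the three summands of \eqref{eq:estEWnew}.

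The main obstacle is the combined bookkeeping of analyticity losses and of the bilinear structure. Since $\tildeEW$ is not pure in $\EW$, essentially every product that appears---$\Ered\,\xiW$, $\Esym\,\xiW$, $\DeK\cdot\DeW$, $\DeW\cdot\Dela$---feeds all three channels $\norm{\EK}_\rho^2$, $\norm{\EW}_\rho^2$ and $\norm{\EK}_\rho\norm{\EW}_\rho$, and one must verify that the accumulated powers of $\ga\de^\tau$ never exceed those displayed in \eqref{eq:estEWnew}: for instance the $\ga^{-5}\de^{-5\tau}$ weight of the pure-$\EK$ term arises from $\DeK\cdot\DeW$, with $\DeK$ of order $\ga^{-2}\de^{-2\tau}$ and the $\xibW$-part of $\DeW$ of order $\ga^{-3}\de^{-3\tau}$. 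Checking that $\xibW$ is indeed the only block that suffers an extra analyticity loss, and that the $\Dela$-dependent terms respect this homogeneity, is the delicate point; the remainder is a routine repetition of the torus estimates carried out in Lemma~\ref{lemma:EKnew}.
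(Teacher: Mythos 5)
Your proposal is correct and follows essentially the same route as the paper: the same left-multiplication by $-\OmO\P^\ttop\comp\Romal\OmbarK$ combined with the reducibility, symplecticity, and cohomological identities to isolate $\ElinW$, and the same bottom-up chain of estimates ($\tildeEW\to\etaW\to\xiW,\Dela\to\DeW\to\ElinW$ plus the two quadratic remainders). Your explicit use of $\W=\P e_n$ to handle the $\W\comp\Romal\Dela$ term, and your identification of $\xibW$ as the sole source of the extra $\ga^{-1}\de^{-\tau}$ loss, match what the paper does implicitly.
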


\begin{proof}
Let us first inspect the sum
\begin{align*}
\tildeEW + \DzphiK\DeW-\DeW\comp\Romal\la -
\W\comp\Romal\Dela.
\end{align*}
If left-multiply it by $-\OmO\PT\comp\Romal\OmbarK$ and by its inverse,
we obtain
\begin{align*}
\phantom{=}&\left(-\OmO\PT\comp\Romal\OmbarK\right)^{-1}
\Big(-\etaW+\tLa\xiW + \Ered\xiW \\
&-\left(I_{2n}-\OmO\Esym\comp\Romal\right)\left(\xiW\comp\Romal\la
+e_n\Dela\right)\\
=&\left(-\OmO\PT\comp\Romal\OmbarK\right)^{-1}\bigg(\Ered\xiW +
\OmO\Esym\comp\Romal\big(\xiW\comp\Romal\la + e_n\Dela\big)\bigg)\\
=&\left(-\OmO\PT\comp\Romal\OmbarK\right)^{-1}\ElinW,
\end{align*}
where we used definitions \eqref{eq:defEsym}, \eqref{eq:defEred},
and that $\xiW$ satisfies \eqref{eq:coho-bundle}.
Result \eqref{eq:EWnew2} then follows from using this last
expression in \eqref{eq:EWnew1}, the assumption
$\norm{\Esym}_{\rho-2\de}i\leq\nu<1$, and \eqref{eq:inv-property}.
In order to obtain estimate \eqref{eq:estEWnew}, note that 
$\DeDzphiT$ is well-defined by assumption \eqref{eq:ass_DeK} 
and that 
\begin{equation}\label{eq:esttildeEW}
   \norm{\tildeEW}_{\rho-2\de}\leq
   \cteDDzphiT\CDeK\sigmaW\frac{\norm{\EK}_\rho}{\ga^2\de^{2\tau}} +
   \norm{\EW}_\rho \leq
   \frac{\CtildeEK}{\ga^2\de^{2\tau}}\norm{\EK}_\rho +
   \CtildeEW\norm{\EW}_\rho.
\end{equation}
Then, we proceed to obtain estimates for each $\etaW^i$ as
\begin{align}\label{eq:estetaW1}
   \norm{\etaaW}_{\rho-2\de} & \leq
   \CNT\cteOm\left(\frac{\CtildeEK}{\ga^2\de^{2\tau}} \norm{\EK}_\rho +
   \norm{\EW}_\rho\right)\\\nonumber
   &\leq\frac{\CetaoneWK}{\ga^2\de^{2\tau}}\norm{\EK}_\rho
   + \CetaoneWW \norm{\EW}_\rho, \\\label{eq:estetaW2}
   \norm{\etabW}_{\rho-2\de} & \leq
   \CNT\cteOm\left(\frac{\CtildeEK}{\ga^2\de^{2\tau}} \norm{\EK}_\rho +
    \norm{\EW}_\rho\right)\\\nonumber
     &\leq\frac{\CetatwoWK}{\ga^2\de^{2\tau}}\norm{\EK}_\rho
    + \CetatwoWW \norm{\EW}_\rho, \\\label{eq:estetaW3}
   \norm{\etacW}_{\rho-2\de} & \leq \Max{\sigmaDteKT,\ \CcXT}\cteOm
   \left(\frac{\CtildeEK}{\ga^2\de^{2\tau}}\norm{\EK}_\rho +
     \norm{\EW}_\rho\right)\\\nonumber
  &\leq\frac{\CetathreeWK}{\ga^2\de^{2\tau}}\norm{\EK}_\rho
  + \CetathreeWW \norm{\EW}_\rho, \\\label{eq:estetaW4}
   \norm{\etadW}_{\rho-2\de} & \leq
   \sigmaWT\cteOm\left(\frac{\CtildeEK}{\ga^2\de^{2\tau}} \norm{\EK}_\rho +
   \norm{\EW}_\rho\right)\\\nonumber
  &\leq\frac{\CetafourWK}{\ga^2\de^{2\tau}}\norm{\EK}_\rho
   + \CetafourWW \norm{\EW}_\rho, 
\end{align}
and for each $\xiW^i$ we obtain from
\eqref{eq:xiW1}-\eqref{eq:xiW4} the following
\begin{align}\label{eq:estxiW1}
   \norm{\xiaW}_{\rho-2\de}&\leq\frac{1}{1-\sigmala}\left(\left(\CetaoneWK +
      \frac{\ChS \CetathreeWK}{1 -\sigmala } \right)
      \frac{\norm{\EK}_\rho}{\ga^2\de^{2\tau}}+
      \left(\CetaoneWW+\frac{\ChS\CetathreeWW}{1-\sigmala}\right)
\norm{\EW}_\rho\right) \\\nonumber
   &\leq\frac{\CxioneWK}{\ga^2\de^{2\tau}}\norm{\EK}_\rho
   + \CxioneWW\norm{\EW}_\rho,
   \\\label{eq:estxiW2}
   \norm{\xibW}_{\rho-3\de}&\leq\frac{\CR\sigmainvla}{\ga\de^{\tau}}
   \left(\frac{\CetatwoWK}{\ga^2\de^{2\tau}}\norm{\EK}_\rho
   + \CetatwoWW \norm{\EW}_\rho
   \right)\\\nonumber
   &\leq\frac{\CxitwoWK}{\ga^3\de^{3\tau}}\norm{\EK}_\rho +
   \frac{\CxitwoWW}{\ga\de^\tau}\norm{\EW}_\rho,\\\label{eq:estxiW3}
   \norm{\xicW}_{\rho-2\de}&\leq\frac{1}{1-\sigmala}
   \left(\frac{\CetathreeWK}{\ga^2\de^{2\tau}}\norm{\EK}_\rho +
   \CetathreeWW \norm{\EW}_\rho\right)\\\nonumber
   &\leq\frac{\CxithreeWK}{\ga^2\de^{2\tau}}\norm{\EK}_\rho +
   \CxithreeWW\norm{\EW}_\rho,
\\\label{eq:estxiW4}
   \norm{\xidW}_{\rho-2\de}&\leq\frac{\sigmala}{1-(\sigmala)^2}
   \left(\frac{\CetafourWK}{\ga^2\de^{2\tau}}\norm{\EK}_\rho
+ \CetafourWW \norm{\EW}_\rho \right)\\\nonumber
&\leq \frac{\CxifourWK}{\ga^2\de^{2\tau}}\norm{\EK}_\rho +
\CxifourWW\norm{\EW}_\rho.
\end{align}
Then, 
\begin{align}\label{eq:estxiW}
   \norm{\xiW}_{\rho-3\de}\leq\max\Big\{&\frac{\CxioneWK}{\ga^2\de^{2\tau}}\norm{\EK}_\rho
   + \CxioneWW\norm{\EW}_\rho,\
     \frac{\CxitwoWK}{\ga^3\de^{3\tau}}\norm{\EK}_\rho +
\frac{\CxitwoWW}{\ga\de^\tau}\norm{\EW}_\rho,\\\nonumber
&\frac{\CxithreeWK}{\ga^2\de^{2\tau}}\norm{\EK}_\rho +
\CxithreeWW\norm{\EW}_\rho,\
\frac{\CxifourWK}{\ga^2\de^{2\tau}}\norm{\EK}_\rho +
\CxifourWW\norm{\EW}_\rho\Big\}\\\nonumber
&\negphantom{$\leq\max\Big\{$}\leq\max\Big\{\CxioneWK\ga\de^\tau,\
      \CxitwoWK,\ \CxithreeWK\ga\de^\tau,\ \CxifourWK\ga\de^\tau
\Big\}\frac{\norm{\EK}_\rho}{\ga^3\de^{3\tau}}\\\nonumber
&\negphantom{$\max\Big\{$} +\max\Big\{\CxioneWW\ga\de^\tau,\ 
   \CxitwoWW,\ \CxithreeWW\ga\de^\tau,\ \CxifourWW\ga\de^\tau \Big\}
\frac{\norm{\EW}_\rho}{\ga\de^\tau}\\\nonumber
&\negphantom{$\leq\max\Big\{$}\leq\frac{\CxiWK}{\ga^3\de^{3\tau}}\norm{\EK}_\rho +
   \frac{\CxiWW}{\ga\de^\tau}\norm{\EW}_\rho.
\end{align}
We can then control the correction of the bundle parameterization and
the rate of contraction as
\begin{align}\label{eq:estDeW}
   \norm{\DeW}_{\rho-3\de}&\leq\CL\Max{
      \frac{\CxioneWK}{\ga^2\de^{2\tau}}\norm{\EK}_\rho
   + \CxioneWW\norm{\EW}_\rho,\
\frac{\CxitwoWK}{\ga^3\de^{3\tau}}\norm{\EK}_\rho +
\frac{\CxitwoWW}{\ga\de^\tau}\norm{\EW}_\rho}\\\nonumber
&\phantom{\leq}+\CN\Max{\frac{\CxithreeWK}{\ga^2\de^{2\tau}}\norm{\EK}_\rho
+ \CxithreeWW\norm{\EW}_\rho,\
   \frac{\CxifourWK}{\ga^2\de^{2\tau}}\norm{\EK}_\rho +
   \CxifourWW\norm{\EW}_\rho}\\\nonumber
&\leq \left(\CL\Max{\CxioneWK\ga\de^\tau,\ \CxitwoWK} + \CN\Max{\CxithreeWK,\
\CxifourWK}\ga\de^\tau\right)
\frac{\norm{\EK}_\rho}{\ga^3\de^{3\tau}}\nonumber  \\
&\phantom{\leq}+\left(\CL\Max{\CxioneWW\ga\de^\tau,\ \CxitwoWW} 
   + \CN \Max{\CxithreeWW,\ \CxifourWW}\ga\de^\tau\right)\frac{\norm{\EW}_\rho}{\ga\de^\tau}\nonumber\\
&\leq\frac{\CDeWK}{\ga^3\de^{3\tau}}\norm{\EK}_\rho +
   \frac{\CDeWW}{\ga\de^\tau}\norm{\EW}_\rho,\nonumber\\
   \abs{\Dela}&\leq \frac{\CetatwoWK}{\ga^2\de^{2\tau}}\norm{\EK}_\rho +
   \CetatwoWW\norm{\EW}_\rho\leq
   \frac{\CDelaK}{\ga^2\de^{2\tau}}
   \norm{\EK}_\rho +\CDelaW\norm{\EW}_\rho.\label{eq:estDela} 
\end{align}
In order to control $\ElinW$, we will use Lemmas \ref{lemma:ap_sym}
and \ref{lemma:ap_red}. Recall that Lemma \ref{lemma:ap_sym}
applies from the assumption $\norm{\hEsym}\leq\hnu<1$.
Therefore, we can proceed as follows
\begin{align}\label{eq:estElinW}
   \norm{\ElinW}_{\rho-3\de}&\leq
   \norm{\Ered}_{\rho-2\de}\norm{\xiW}_{\rho-3\de} +
   \norm{\Esym}_{\rho-2\de}\norm{\xiW}_{\rho-3\de}\abs{\la} +
   \norm{\Esym}_{\rho-2\de} \abs{\Dela}\\\nonumber
   &\leq\left(\frac{\CEredK}{\ga\de^{\tau+1}}\norm{\EK}_\rho
   +\CEredW\norm{\EW}_\rho\right)\left(\frac{\CxiWK}{\ga^3\de^{3\tau}}
   \norm{\EK}_{\rho} + \frac{\CxiWW}{\ga\de^\tau}
    \norm{\EW}_\rho\right)\\\nonumber
   &\phantom{\leq} + \left(\frac{\CEsymK}{\ga\de^{\tau+1}}
   \norm{\EK}_{\rho} + \CEsymW \norm{\EW}_\rho\right)
   \left(\frac{\CxiWK}{\ga^3\de^{3\tau}}
   \norm{\EK}_{\rho} + \frac{\CxiWW}{\ga\de^\tau}
    \norm{\EW}_\rho\right)\sigmala \\\nonumber
   &\phantom{\leq} + \left(\frac{\CEsymK}{\ga\de^{\tau+1}}
   \norm{\EK}_{\rho} + \CEsymW \norm{\EW}_\rho\right)
   \left(\frac{\CDelaK}{\ga^2\de^{2\tau}}
   \norm{\EK}_{\rho} + \CDelaW \norm{\EW}_\rho\right)\\\nonumber
   &\leq\left(\CEredK\CxiWK + \CEsymK\CxiWK\sigmala + 
\CEsymK\CDelaK\ga\de^\tau\right)\frac{\norm{\EK}_\rho^2}{\ga^4\de^{4\tau+1}}\\\nonumber
   &\phantom{\leq} + \left(\CEredW\CxiWW +\CEsymW\CxiWW\sigmala +
      \CEsymW\CDelaW\ga\de^\tau\right)
      \frac{\norm{\EW}_\rho^2}{\ga\de^\tau} \\\nonumber
   &\phantom{\leq} + \Big(\CEredW\CxiWK + \CEsymW\CxiWK 
   + \left( \CEredK\CxiWW +  \CEsymK\CxiWW \right)\ga\de^{\tau-1}
   \\\nonumber
   &\phantom{\leq} +  \CEsymW\CDelaK\ga\de^\tau +
   \CEsymK\CDelaW\ga^2\de^{2\tau-1}
    \Big)
    \frac{\norm{\EK}_\rho\norm{\EW}_\rho}{\ga^3\de^{3\tau}}\\\nonumber
   &\leq\frac{\CElinWKK}{\ga^4\de^{4\tau+1}}\norm{\EK}_\rho^2  +
    \frac{\CElinWKW}{\ga^3\de^{3\tau}}\norm{\EK}_\rho\norm{\EW}_\rho+
    \frac{\CElinWWW}{\ga\de^\tau}\norm{\EW}_\rho^2.
\end{align}
Lastly, we can control the remaining terms in \eqref{eq:EWnew2} as 
\begin{align}\label{eq:est1}
   \norm{\De\DzphiT[\DeW]}_{\rho-3\de}&\leq\cteDDzphiT\CDeK\left(
   \frac{\CDeWK}{\ga^5\de^{5\tau}}\norm{\EK}_\rho^2 +
\frac{\CDeWW}{\ga^3\de^{3\tau}}\norm{\EK}_\rho\norm{\EW}_\rho\right)\\\nonumber
&\leq\frac{\CDeDzphiTKK}{\ga^5\de^{5\tau}}\norm{\EK}_\rho^2 + 
     \frac{\CDeDzphiTKW}{\ga^3\de^{3\tau}}\norm{\EK}_\rho\norm{\EW}_\rho,\\ 
  \label{eq:est2}
\norm{\DeW\Dela}_{\rho-3\de}&\leq\frac{\CDeWK\CDelaK}
   {\ga^5\de^{5\tau}}\norm{\EK}_\rho^2 +
   \frac{\CDeWW\CDelaW}{\ga\de^\tau}\norm{\EW}_\rho^2\\\nonumber
 &\phantom{\leq} + \frac{\CDeWK\CDelaW +
 \CDeWW\CDelaK}{\ga^3\de^{3\tau}}\norm{\EK}_\rho\norm{\EW}_\rho\\\nonumber
 & \leq\frac{\CDeWDelaKK}{\ga^5\de^{5\tau}}\norm{\EK}_\rho^2  +
\frac{\CDeWDelaKW}{\ga^3\de^{3\tau}}\norm{\EK}_\rho\norm{\EW}_\rho+
 \frac{\CDeWDelaWW}{\ga\de^\tau}\norm{\EW}_\rho^2.
\end{align}
We can now use the assumption
$\norm{\Esym}_{\rho-2\de}\leq\nu<1$
and \eqref{eq:neumann} to finally obtain the estimate
\eqref{eq:estEWnew} as
\begin{align}\label{eq:estEWnew2}
   \norm{\EWnew}_{\rho-3\de}&\leq
   \frac{\CP}{1-\nu}\left(\frac{\CElinWKK}{\ga^4\de^{4\tau+1}}\norm{\EK}_\rho^2
      + \frac{\CElinWKW}{\ga^3\de^{3\tau}}\norm{\EK}_\rho\norm{\EW}_\rho
        + \frac{\CElinWWW}{\ga\de^\tau}\norm{\EW}_\rho^2
\right)\\\nonumber
    &\phantom{\leq} +
    \frac{\CDeDzphiTKK}{\ga^5\de^{5\tau}}\norm{\EK}_\rho^2 +
    \frac{\CDeDzphiTKW}{\ga^3\de^{3\tau}}\norm{\EK}_\rho\norm{\EW}_\rho\\\nonumber
    &\phantom{\leq} + \frac{\CDeWDelaKK}{\ga^5\de^{5\tau}}\norm{\EK}_\rho^2 +
 \frac{\CDeWDelaKW}{\ga^3\de^{3\tau}}\norm{\EK}_\rho\norm{\EW}_\rho
  +\frac{\CDeWDelaWW}{\ga\de^\tau}\norm{\EW}_\rho^2
\\\nonumber
    &\leq\left(\frac{\CP}{1-\nu}\CElinWKK\ga\de^{\tau-1} 
    + \CDeDzphiTKK + \CDeWDelaKK\right)
    \frac{\norm{\EK}_\rho^2}{\ga^5\de^{5\tau}}\\\nonumber
    &\phantom{\leq} +  \left(\frac{\CP}{1-\nu}\CElinWKW + \CDeDzphiTKW +
    \CDeWDelaKW\right)\frac{\norm{\EK}_\rho\norm{\EW}_\rho}{\ga^3\de^{3\tau}}\\\nonumber
    &\phantom{\leq}+\left(\frac{\CP}{1-\nu}\CElinWWW + \CDeWDelaWW\right)
    \frac{\norm{\EW}_\rho^2}{\ga\de^\tau}\\\nonumber
    &\leq\frac{\CEWKK}{\ga^5\de^{5\tau}}\norm{\EK}_\rho^2 +
    \frac{\CEWKW}{\ga^3\de^{3\tau}}\norm{\EK}_\rho\norm{\EW}_\rho
    + \frac{\CEWWW}{\ga\de^\tau}\norm{\EW}_\rho^2.
\end{align}
\end{proof}

\section{Proof of the KAM theorem}\label{sec:proof KAM}
As mentioned previously, the iterative scheme described in
Section \ref{sec:quasi-Newton} allows the construction of a
sequence of objects. The key idea for the proof of Theorem \ref{thm:KAM} 
consists in showing that the sequence for $\K, \W,$ and $\la$
converge---under certain conditions---to solutions of
\eqref{eq:invK} and \eqref{eq:invW}. First, we obtain an
iterative lemma and derive conditions such that, if satisfied, the
iterative lemma can be applied again to the objects obtained
after one step of the Quasi-Newton method. Convergence then
follows from ensuring that the conditions necessary for the
iterative lemma are satisfied at every step of the iterative
process.

\subsection{The iterative lemma}\label{sec:iterative lemma}
After one step of the iterative scheme, we obtain the new 
parameterizations
$\barK\leftarrow\K+\DeK$ and $\barW\leftarrow \W + \DeW$ and 
the new rate of contraction $\barla\leftarrow\la+\Dela$. From
$\barK,\barW$, and $\barla$, 
we obtain the new objects $\barNO, \barB,$ and $\bar S$. 
The following lemma allows the control of the new objects---and
consequently the control of the iterative procedure.

\begin{lemma}[The iterative lemma]\label{lemma:iter}
Assume the hypothesis of Theorem \ref{thm:KAM}. 
For every $\de\in(0,\rho/3)$, there exists a constant $\CDe$ such that
if
\begin{equation}\label{eq:Hypoiter}
   \frac{\CDe}{\ga\de^\tau}\E<1
\end{equation}
holds, where
\begin{equation*}
   \E=\Max{\frac{\norm{\EK}_\rho}{\ga^2\de^{2\tau}},\
   \norm{\EW}_\rho},
\end{equation*}
\begin{align}\label{eq:deffracC}
   \CDe := \max\Bigg\{&
      \CEsym\ga\de^{\tau},\
      \frac{\Max{\ga^2\de^{2\tau},\CDeK}}{R-\norm{\K-\KO}_\rho}\ga\de^\tau,\
      \frac{d\CDeK}{\sigmaDteK -
      \norm{\DteK}_\rho}\ga\de^{\tau-1},\\\nonumber
      &\frac{\ell\CDeK}{\sigmaDvpK -
   \norm{\DvpK}_\rho}\ga\de^{\tau-1},\
   \frac{2n\CDeK}{\sigmaDteKT -
   \norm{\DteKT}_\rho}\ga\de^{\tau-1},\\\nonumber 
      &  \frac{2n\CDeK}{\sigmaDvpKT - \norm{\DvpKT}_\rho}\ga\de^{\tau-1},\ 
\frac{\CDeW}{\sigmaW-\norm{\W}_\rho},\
\frac{2n\CDeW}{\sigmaWT-\norm{\WT}_\rho},\\ \nonumber
   &\frac{\CDeB}{\sigmaB-\norm{\B}_\rho},\
   \frac{\CDeNO}{\sigmaNO-\norm{\NO}_\rho},\
   \frac{\CDeNOT}{\sigmaNOT-\norm{\NOT}_\rho},\ 
\frac{\CDeinvaverS}{\sigmainvaverS -
\big|\aver{\S}^{\!-1}\big|},\\\nonumber
   &\frac{\CDela}{\sigmala- \abs{\la}}\ga\de^\tau,\
\frac{\CDeinvla}{\sigmainvla- \abs{\invla}}\ga\de^\tau
\Bigg\},
\end{align}
then we have new parameterizations
$\barK\in\left(\cA\big(\TT^{d+\ell}_{\rho-2\de}\big)\right)^{2n}$,
$\barW\in\left(\cA\big(\TT^{d+\ell}_{\rho-3\de}\big)\right)^{2n}$
and their associated objects satisfy
\begin{align}\label{eq:estDetebarK}
&\norm{\DtebarK}_{\rho-3\de}<\sigmaDteK,\\
&\norm{\DvpbarK}_{\rho-3\de}<\sigmaDvpK,\\
&\norm{\DtebarKT}_{\rho-3\de}<\sigmaDteKT,\\\label{eq:estDevpbarKT}
&\norm{\DvpbarKT}_{\rho-3\de}<\sigmaDvpKT,\\\label{eq:estbarW}
&\norm{\barW}_{\rho-3\de}<\sigmaW,\\\label{eq:estbarWT}
&\norm{\barWT}_{\rho-3\de}<\sigmaWT,\\\label{eq:estNO}
&\norm{\barNO}_{\rho-3\de}<\sigmaNO,\\\label{eq:estNOT}
&\norm{\barNOT}_{\rho-3\de}<\sigmaNOT,\\\label{eq:estB}
&\norm{\barB}_{\rho-3\de}<\sigmaB,\\\label{eq:estinvaverSone}
&\Abs{\aver{\bar \S}^{-1}}<\sigmainvaverS.
\end{align}
Furthermore, the hyperbolicity is controlled by 
\begin{equation}\label{eq:barla}
   \abs{\barla} < \sigmala,\quad \abs{\barinvla}<\sigmainvla,
\end{equation}
and the new objects are close to the original ones in the
following sense 
\begin{align}\label{eq:barK-K}
&\norm{\barK-\K}_{\rho-2\de}\leq\CDeK\E,\\\label{eq:DtebarK-DteK}
&\norm{\DtebarK-\DteK}_{\rho-3\de}\leq\frac{d\CDeK}{\de}\E,\\
&\norm{\DvpbarK-\DvpK}_{\rho-3\de}\leq\frac{\ell\CDeK}{\de}\E,\\
&\norm{(\DtebarK)^\ttop-(\DteK)^\ttop}_{\rho-3\de}\leq\frac{2n\CDeK}{\de}\E,\\\label{eq:DvpbarKT-DvpKT}
&\norm{(\DvpbarK)^\ttop-(\DvpK)^\ttop}_{\rho-3\de}\leq\frac{2n\CDeK}{\de}\E,\\\label{eq:barW-W}
&\norm{\barW - \W}_{\rho-3\de}\leq
\frac{\CDeW}{\ga\de^{\tau}}\E,\\\label{eq:barWT-WT}
&\norm{\barWT -\WT}_{\rho-3\de}\leq
\frac{2n\CDeW}{\ga\de^{\tau}}\E,\\
\label{eq:barB-B}
&\norm{\barB - \B}_{\rho-3\de} \leq
\frac{\CDeB}{\ga\de^{\tau}}\E,\\\label{eq:barNO-NO}
&\norm{\barNO-\NO}_{\rho-3\de}\leq
\frac{\CDeNO}{\ga\de^{\tau}}\E,\\ \label{eq:barNOT-NOT}
&\norm{\barNOT -\NOT}_{\rho-3\de}\leq
\frac{\CDeNOT}{\ga\de^{\tau}}\E,\\\label{eq:barSone-Sone}
&\Abs{\aver{\bar\S}^{\!-1} - \aver{\S}^{\!-1}}\leq
\frac{\CDeinvaverS}{\ga\de^{\tau}}\E,\\\label{eq:barla-la}
&\abs{\barla-\la}\leq\CDela\E,\\\label{eq:barinvla-invla}
&\abs{\barinvla-\invla}\leq\CDeinvla\E.
\end{align}
Lastly, the new errors in the invariance equations
\begin{align*}
   \EKnew&=\phi_T\comp(\barK,\id)-\barK\comp\Romal,\\
   \EWnew&=\DzphiT\comp(\barK,\id)\barW
   -\barW\comp\Romal\barla,
\end{align*}
satisfy
\begin{equation}\label{eq:estEnew}
   \norm{\EKnew}_{\rho-2\de}\leq\CEK\E^2,\quad
   \norm{\EWnew}_{\rho-3\de}\leq\frac{\CEW}{\ga\de^\tau}\E^2.
\end{equation}

\end{lemma}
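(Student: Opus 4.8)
The plan is to collect, under the single hypothesis~\eqref{eq:Hypoiter}, the corrections of Section~\ref{sec:quasi-Newton} together with the estimates of Sections~\ref{sec:some lemmas} and~\ref{sec:tori bundle correction}, using throughout that $\norm{\EK}_\rho\le\ga^2\de^{2\tau}\E$ and $\norm{\EW}_\rho\le\E$ by definition of $\E$. First I would settle the primary objects. Since $\xiK$ from Section~\ref{sec:NewtonK} is analytic and bounded on $\TT^{d+\ell}_{\rho-2\de}$, and $\xiW$ from the bundle step on $\TT^{d+\ell}_{\rho-3\de}$, the corrections $\DeK=\P\xiK$ and $\DeW=\P\xiW$ are analytic there, so $\barK\in\cA(\TT^{d+\ell}_{\rho-2\de})^{2n}$ and $\barW\in\cA(\TT^{d+\ell}_{\rho-3\de})^{2n}$. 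Feeding the bounds on $\norm{\EK}_\rho$, $\norm{\EW}_\rho$ into~\eqref{eq:estDeK},~\eqref{eq:estDeW} and~\eqref{eq:estDela} yields~\eqref{eq:barK-K},~\eqref{eq:barW-W} and~\eqref{eq:barla-la} with $\CDeW:=\CDeWK+\CDeWW$ and $\CDela:=\CDelaK+\CDelaW$, while~\eqref{eq:barinvla-invla} follows from the identity $\barinvla-\invla=-\Dela\,\invla\,\barinvla$ once $\abs{\barla}$ is bounded below, which~\eqref{eq:Hypoiter} guarantees. The derivative estimates~\eqref{eq:DtebarK-DteK}--\eqref{eq:DvpbarKT-DvpKT} then come from $\DtebarK-\DteK=\Dif_\te(\DeK)$, and the analogous identities, by one Cauchy estimate on $\TT^{d+\ell}_{\rho-3\de}$, costing one power of $\de$.

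The bulk of the work is the second block: the displacement of the associated objects $\barNO$, $\barB$, $\bar\S$. Here I would first bound $\barL-\L$, whose columns differ from those of $\L$ by $\Dif_\te(\DeK)$, the increment $\De X$ of the vector field (controlled by $\norm{\DeK}_{\rho-2\de}$ exactly as in Section~\ref{sec:invL}), $\Dif_\vp(\DeK)\hatal$, and $\DeW$, so that $\norm{\barL-\L}_{\rho-3\de}=O(\E/\ga\de^{\tau})$. Feeding this, together with $\norm{\GbarK-\GK}_{\rho-2\de}\le\cteDG\norm{\DeK}_{\rho-2\de}$, into $\GL=\LT\GK\L$ and then into $\B=\GL^{-1}$ through the resolvent identity $\barB-\B=-\barB(\GbarL-\GL)\B$ gives~\eqref{eq:barB-B}, the invertibility of $\GbarL$ being legitimate because~\eqref{eq:Hypoiter} keeps $\GbarL-\GL$ small. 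The formula $\NO=\J\comp\K\,\L\B$ then gives~\eqref{eq:barNO-NO}, and transposing gives~\eqref{eq:barNOT-NOT}. For the new shear $\bar\S$, which is the leading block of the new torsion $\barhS$, I would differentiate~\eqref{eq:defshat}: $\hS$ is assembled from $\NO$, from $\OmbarK$ (increment $O(\cteDOm\norm{\DeK})$), and from $\DzphiK$ (increment $O(\cteDDzphiT\norm{\DeK})$), so $\norm{\barhS-\hS}_{\rho-3\de}=O(\E/\ga\de^{\tau})$; restricting to the leading block, averaging, and using $\aver{\bar\S}^{-1}-\aver{\S}^{-1}=-\aver{\bar\S}^{-1}(\aver{\bar\S}-\aver{\S})\aver{\S}^{-1}$ gives~\eqref{eq:barSone-Sone}, again because~\eqref{eq:Hypoiter} keeps $\aver{\bar\S}$ invertible. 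This step also produces the explicit constants $\CDeNO$, $\CDeNOT$, $\CDeB$ and $\CDeinvaverS$ entering~\eqref{eq:deffracC}.

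The remaining, purely bookkeeping, step is to note that each entry of the maximum defining $\CDe$ in~\eqref{eq:deffracC} is precisely ``(constant in one of the displacement estimates above)$\,/\,$(room left in the corresponding strict inequality of $H_2$ or $H_3$)'' times the matching power of $\ga\de$; hence $\frac{\CDe}{\ga\de^{\tau}}\E<1$ forces each displacement to stay within the slack of the appropriate hypothesis, which is exactly~\eqref{eq:estDetebarK}--\eqref{eq:estinvaverSone} and~\eqref{eq:barla}. The first two entries of the maximum likewise validate the standing assumptions~\eqref{eq:ass_EK},~\eqref{eq:ass_DeK} and~\eqref{eq:ass_Esym} (fixing, say, $\nu=\hnu=\tfrac12$), so that Lemmas~\ref{lemma:EKnew} and~\ref{lemma:EWnew} apply. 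Substituting $\norm{\EK}_\rho\le\ga^2\de^{2\tau}\E$ and $\norm{\EW}_\rho\le\E$ into~\eqref{eq:estEKnew} and~\eqref{eq:estEWnew} then collapses the mixed monomials to a common power of $\E$, giving~\eqref{eq:estEnew} with $\CEK:=\CEKKK+\CEKKW$ and $\CEW:=\CEWKK+\CEWKW+\CEWWW$.

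I expect the main obstacle to be the second paragraph: one has to track the analyticity loss carefully---the correction estimates live on $\TT^{d+\ell}_{\rho-2\de}$, but the primary hypotheses $H_2$, $H_3$ must be re-verified on $\TT^{d+\ell}_{\rho-3\de}$, which forces an extra Cauchy step for every derivative---and, simultaneously, one must ensure that the Neumann expansions for $\GbarL^{-1}$ and $\aver{\bar\S}^{-1}$ converge under~\eqref{eq:Hypoiter}, i.e.\ that $\CDeB$, $\CDeNO$, $\CDeNOT$ and $\CDeinvaverS$ can be made explicit. Every other step is a linear-algebraic manipulation already prepared by the approximate geometric lemmas of Section~\ref{sec:some lemmas}.
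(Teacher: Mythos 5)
Your proposal follows essentially the same route as the paper's proof: corrections and their Cauchy derivatives, quantitative inversion (the paper's Lemma \ref{lemma:invM}, equivalent to your resolvent identities) for $\barB$, $\aver{\bar\S}^{-1}$ and $\barinvla$, the ``displacement over remaining slack'' reading of each entry of \eqref{eq:deffracC}, and substitution of $\norm{\EK}_\rho\le\ga^2\de^{2\tau}\E$, $\norm{\EW}_\rho\le\E$ into Lemmas \ref{lemma:EKnew} and \ref{lemma:EWnew}. The one discrepancy is your choice $\nu=\hnu=\tfrac12$: the first entry of the max in \eqref{eq:deffracC} only yields $\norm{\Esym}_{\rho-2\de}\le\CEsym\E<1$, so the paper instead takes $\nu=\CEsym\E$ and $\hnu=\ChEsym\E$ (noting $\hnu\le\nu$), which is the normalization consistent with the stated constant.
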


\begin{proof}
Recall that in order to use Lemmas \ref{lemma:EKnew} and
\ref{lemma:EWnew}, we need hypothesis \eqref{eq:ass_hEsym},
\eqref{eq:ass_Esym}, \eqref{eq:ass_EK}, and \eqref{eq:ass_DeK}
to hold. Let us take for the values of $\hnu$ and $\nu$ the
following
\begin{align}\label{eq:def_hnu}
   \hnu&=\left(\ChEsymK\ga\de^{\tau-1}+\ChEsymW\right)\E=\ChEsym\E,\\\label{eq:def_nu}
\nu &=\left(\CEsymK\ga\de^{\tau-1} + \CEsymW\right)\E=\CEsym\E,
\end{align}
which clearly satisfy $\norm{\hEsym}_{\rho-2\de}\leq\hnu$ and
$\norm{\Esym}_{\rho-2\de}\leq\nu$. It is not hard to show that
$\hnu\leq\nu$; see Table \ref{tab:sym-red}. Then, 
from hypothesis \eqref{eq:Hypoiter} for the first term in
\eqref{eq:deffracC} we conclude that \eqref{eq:ass_hEsym} and
\eqref{eq:ass_Esym} are satisfied.
For \eqref{eq:ass_EK} and \eqref{eq:ass_DeK}, observe that by hypothesis
\eqref{eq:Hypoiter} for the second term in
\eqref{eq:deffracC} 
\begin{equation*}
   \frac{\Max{\ga^2\de^{2\tau},\CDeK}\ga\de^\tau}
   {R-\norm{\K-\KO}_\rho}\frac{\E}{\ga\de^\tau}<1.
\end{equation*}
Then, from the definition of $\E$ and estimate \eqref{eq:estDeK}, we conclude that
\eqref{eq:ass_EK} and \eqref{eq:ass_DeK} are also satisfied.
Furthermore, estimates \eqref{eq:barK-K}, \eqref{eq:barW-W}, and
\eqref{eq:barWT-WT} are
immediate from \eqref{eq:estDeK} and \eqref{eq:estDeW}, respectively.
For estimates  \eqref{eq:estDetebarK}-\eqref{eq:estDevpbarKT}, we 
proceed as follows
\begin{align*}
   \norm{\DtebarK}_{\rho-3\de}&\leq\norm{\DteK}_\rho
   +\norm{\Dif_\te\DeK}_{\rho-3\de}\leq\norm{\DteK}_\rho  +
   \frac{d\CDeK}{\ga^2\de^{2\tau+1}}\norm{\EK}_\rho,\\
   \norm{\DvpbarK}_{\rho-3\de}&\leq\norm{\DvpK}_\rho + 
   \norm{\Dif_\vp\DeK}_{\rho-3\de}\leq\norm{\DvpK}_\rho  + 
   \frac{\ell\CDeK}{\ga^2\de^{2\tau+1}}\norm{\EK}_\rho,\\
   \norm{(\DtebarK)^\ttop}_{\rho-3\de}&\leq
   \norm{(\DteK)^\ttop}_\rho + 
   \norm{(\Dif_\te\DeK)^\ttop}_{\rho-3\de}\leq
   \norm{(\DteK)^\ttop}_\rho +
   \frac{2n\CDeK}{\ga^2\de^{2\tau+1}}\norm{\EK}_\rho,\\
   \norm{(\DvpbarK)^\ttop}_{\rho-3\de}&\leq 
   \norm{(\DvpK)^\ttop}_\rho +
   \norm{(\Dif_\vp\DeK)^\ttop}_{\rho-3\de}\leq
   \norm{(\DvpK)^\ttop}_\rho +
   \frac{2n\CDeK}{\ga^2\de^{2\tau+1}}\norm{\EK}_\rho,
\end{align*}
where we used Cauchy estimates on $\DeK$. Note that these Cauchy
estimates result directly in
\eqref{eq:DtebarK-DteK}-\eqref{eq:DvpbarKT-DvpKT}. Then, 
\begin{align*}
   \norm{\DtebarK}_{\rho-3\de}&\leq\norm{\DteK}_\rho
    + \frac{d\CDeK}{\de}\E,\\
   \norm{\DvpbarK}_{\rho-3\de}&\leq\norm{\DvpK}_\rho + 
   \frac{\ell\CDeK}{\de}\E,\\
   \norm{(\DtebarK)^\ttop}_{\rho-3\de}&\leq
   \norm{(\DteK)^\ttop}_\rho + 
   \frac{2n\CDeK}{\de}\E,\\
   \norm{(\DvpbarK)^\ttop}_{\rho-3\de}&\leq 
   \norm{(\DvpK)^\ttop}_\rho +
   \frac{2n\CDeK}{\de}\E,
\end{align*}
and by hypothesis \eqref{eq:Hypoiter}---in particular for the third
to sixth terms in \eqref{eq:deffracC}---we conclude
\begin{align*}
   \norm{\DtebarK}_{\rho-3\de} & <\sigmaDteK,  
  &\norm{(\DtebarK)^\ttop}_{\rho-3\de} &< \sigmaDteKT, \\ 
   \norm{\DvpbarK}_{\rho-3\de} &< \sigmaDvpK, 
  & \norm{(\DvpbarK)^\ttop}_{\rho-3\de} &< \sigmaDvpKT.
\end{align*}
Similarly, for the corrected parameterization of the bundle,
from estimate \eqref{eq:estDeW} we have
\begin{align}\label{eq:CDeW}
   \norm{\barW}_{\rho-3\de}&\leq \norm{\W}_\rho + 
   \frac{\CDeWK}{\ga^3\de^{3\tau}}\norm{\EK}_\rho +
   \frac{\CDeWW}{\ga\de^\tau}\norm{\EW}_\rho \\\nonumber
   &\leq \norm{\W}_\rho +
   \frac{\CDeW}{\ga\de^\tau}\E,\\\nonumber
   \norm{\barW^\ttop}_{\rho-3\de}&\leq \norm{\W^\ttop}_\rho +
    \frac{2n\CDeWK}{\ga^3\de^{3\tau}}\norm{\EK}_\rho +
    \frac{2n\CDeWW}{\ga\de^\tau}\norm{\EW}_\rho\\\nonumber
   &\leq \norm{\W^\ttop}_\rho + 
   \frac{2n\CDeW}{\ga\de^\tau}\E.
\end{align}
By hypothesis \eqref{eq:Hypoiter} for the seventh and
eighth terms in \eqref{eq:deffracC}, we obtain estimates
\eqref{eq:estbarW} and \eqref{eq:estbarWT}.
In order to control $\barL$, note that
\begin{equation*}
   \DeL:=\barL-\L =
   \begin{pmatrix}
           \Dif_\te\DeK\ \Big|\ \widetilde{\De X} -
      \Dif_\vp\DeK\hatal\ \Big|\ \DeW
   \end{pmatrix},
\end{equation*}
where
\begin{equation*}
        \widetilde{\De X}:=\int_0^1 \Dif_z
   X\comp\left(\K + s\DeK,\id\right)\DeK ds,
\end{equation*}
which is well-defined in $\TT^d_{\rho-2\de}\times\TT^\ell_{\rho-2\de}$
by hypothesis \eqref{eq:Hypoiter} for the second term in
\eqref{eq:deffracC}. Then, using \eqref{eq:estDeK},
\eqref{eq:estDeW}, and Cauchy estimates, we obtain
\begin{align}\label{eq:CbarL-L}
   \norm{\barL-\L}_{\rho-3\de}&\leq\left(
           \left(d+\ell\abs{\hatal}  + \cteDzX\de\right)\CDeK\ga\de^{\tau-1} + \CDeWK
\right)\frac{\norm{\EK}_\rho}{\ga^3\de^{3\tau}} +
\frac{\CDeWW}{\ga\de^\tau}\norm{\EW}_\rho\\\nonumber
&\leq\frac{\CDeLK}{\ga^3\de^{3\tau}} \norm{\EK}_\rho +
\frac{\CDeLW}{\ga\de^\tau}\norm{\EW}_\rho\\\nonumber
&\leq\frac{\CDeL}{\ga\de^\tau}\E,\\\label{eq:CbarLT-LT}
\norm{\barL^\ttop - \L^\ttop}_{\rho-3\de}&\leq \max
   \Bigg\{\frac{2n}{\de}\norm{\DeK}_{\rho-2\de},\
   2n\left(\cteDzXT+\frac{\abs{\hatal^\ttop}}{\de}\right)\norm{\DeK}_{\rho-2\de},\
      2n\norm{\DeW}_{\rho-3\de} \Bigg\}\\\nonumber
     &\leq \frac{2n}{\de}\max\Big\{1, \cteDzXT\de +
     \abs{\hatal^\ttop}\Big\} \norm{\DeK}_{\rho-2\de}
     + 2n\norm{\DeW}_{\rho-3\de}\\\nonumber
     &\leq  2n\left(\max\Big\{1, \cteDzXT\de + \abs{\hatal^\ttop}
     \Big\}\CDeK\ga\de^{\tau-1} 
     + \CDeWK\right)\frac{\norm{\EK}_\rho}{\ga^3\de^{3\tau}} +
     \frac{2n\CDeWW}{\ga\de^\tau}\norm{\EW}_\rho\\\nonumber
     &\leq\frac{\CDeLTK}{\ga^3\de^{3\tau}}\norm{\EK}_\rho +
        \frac{\CDeLTW}{\ga\de^\tau}\norm{\EW}_\rho\\\nonumber
     &\leq\frac{\CDeLT}{\ga\de^\tau}\E.
\end{align}
Let us now control $\DeB:=\barB-\B$. First, 
for the restricted metric, we have that 
\begin{equation*}
   \GbarL-\GL =\L^\ttop \DeG \barL
   +\L^\ttop\G\comp\K\DeL +
\DeL^\ttop\G\comp\barK\barL,
\end{equation*}
where
\begin{equation*}
\DeG:=\GbarK-\GK=\int_0^1\Dif \G\left(\K +
s\DeK\right)\DeK ds,
\end{equation*}
which is well-defined in
$\TT^d_{\rho-2\de}\times\TT^\ell_{\rho-2\de}$ by hypothesis
\eqref{eq:Hypoiter} for the second term in \eqref{eq:deffracC}.  
Then, 
\begin{align}\label{eq:CbarGL-GL}
   \norm{\GbarL - \GL}_{\rho-3\de}&\leq
   \CLT\cteDG\CL\norm{\DeK}_{\rho-2\de} +
   \CLT\cteG\norm{\DeL}_{\rho-3\de}  +
   \cteG\CL\norm{\DeL^\ttop}_{\rho-3\de}\\\nonumber
   &\leq \left(\CLT\cteDG\CL\CDeK\ga\de^\tau + \CLT\cteG\CDeLK +
   \cteG\CL\CDeLTK\right)\frac{\norm{\EK}_\rho}{\ga^3\de^{3\tau}}\\\nonumber
   &\phantom{=} + \left(\CLT\cteG\CDeLW + \cteG\CL\CDeLTW\right)
   \frac{\norm{\EW}_\rho}{\ga\de^\tau}\\\nonumber
   &\leq\frac{\CDeGLK}{\ga^3\de^{3\tau}}\norm{\EK}_\rho +
   \frac{\CDeGLW}{\ga\de^\tau}\norm{\EW}_\rho\\\nonumber
   & \leq\frac{\CDeGL}{\ga\de^\tau}\E.
\end{align}
We will now use the following result that is presented for matrices
but extends to matrix valued maps \cite{FH24}.
\begin{lemma}\label{lemma:invM}
   Let $M\in\CC^{n\times n}$ be an invertible matrix satisfying
   $|M^{-1}|<\si$ and assume that $\bar M\in\CC^{n\times n}$ satisfies
   \begin{equation*}
      \frac{\si^2|\bar M-M|}{\si-|M^{-1}|}<1.
   \end{equation*} 
   Then, $\bar M$ is invertible and 
   \[
      |\bar M^{-1}|<\si, \quad |\bar M^{-1} -M^{-1}
      |\leq\si^2|\bar M - M|.
   \]
\end{lemma}
Note that by hypothesis \eqref{eq:Hypoiter} for the ninth term in
\eqref{eq:deffracC}, 
\[
   \frac{(\sigmaB)^2\norm{\GbarL - \GL}_{\rho-3\de}}{\sigmaB -
   \norm{B}_\rho}<1.
\]
Therefore, applying Lemma \ref{lemma:invM} to $\GbarL(\angles)$ and
$\GL(\angles)$, we obtain \eqref{eq:barB-B} and \eqref{eq:estB}
as
\begin{align}\label{eq:CbarB-B}
   \norm{\barB - \B}_{\rho-3\de}&\leq(\sigmaB)^2
   \norm{\GbarL - \GL}_{\rho-3\de}\\\nonumber
  &\leq\frac{(\sigmaB)^2\CDeGLK}{\ga^3\de^{3\tau}}\norm{\EK}_\rho
   +
   \frac{(\sigmaB)^2\CDeGLW}{\ga\de^\tau}\norm{\EW}_\rho\\\nonumber
  &\leq\frac{\CDeBK}{\ga^3\de^{3\tau}}\norm{\EK}_\rho +
  \frac{\CDeBW}{\ga\de^\tau} \norm{\EW}_\rho\\\nonumber
  &\leq\frac{\CDeB}{\ga\de^\tau}\E,\\\nonumber
   \norm{\barB}_{\rho-3\de}&<\sigmaB.
\end{align}
Let us proceed and control $\DeNO:=\barNO-\NO$ and
$\DeNOT:=\barNOT-\NOT$. Note that
\begin{equation*}
   \barNO-\NO = \J\comp \K \L \DeB + \J\comp\K\DeL\barB
+ \DeJ \barL\barB,
\end{equation*}
where
\begin{equation*}
   \DeJ:=\J\comp\barK
   - \J\comp\K
                 = \int_0^1\DJ\big(\K +
                 s\DeK\big)\DeK ds
\end{equation*}
is well-defined in $\TT^d_{\rho-2\de}\times\TT^\ell_{\rho-2\de}$ 
by hypothesis \eqref{eq:Hypoiter} for the second term in 
\eqref{eq:deffracC}. Then, we obtain
\eqref{eq:barNO-NO} and \eqref{eq:barNOT-NOT} as
\begin{align}\label{eq:CbarNO-NO}
   \norm{\barNO-\NO}_{\rho-3\de}&\leq\cteJ\CL\norm{\DeB}_{\rho-3\de}
+ \cteJ\norm{\DeL}_{\rho-3\de}\sigmaB
+\cteDJ\norm{\DeK}_{\rho-2\de}\CL\sigmaB\\\nonumber
&\leq \left(\cteJ\left(\CL\CDeBK + \CDeLK\sigmaB\right)
 + \cteDJ\CDeK\CL\sigmaB\ga\de^\tau\right)
 \frac{\norm{\EK}_\rho}{\ga^3\de^{3\tau}}\\\nonumber
&\phantom{\leq} + \cteJ\left(\CL\CDeBW + 
\CDeLW\sigmaB\right)\frac{\norm{\EW}_\rho}{\ga\de^\tau}\\\nonumber
&\leq\frac{\CDeNOK}{\ga^3\de^{3\tau}}\norm{\EK}_\rho + 
\frac{\CDeNOW}{\ga\de^\tau}\norm{\EW}_\rho\\\nonumber
   &\leq\frac{\CDeNO}{\ga\de^\tau}\E,\\\label{eq:CbarNOT-NOT}
\norm{\barNOT -\NOT}_{\rho-3\de}&\leq
   \CLT\cteJT\norm{\DeB}_{\rho-3\de} +
   \sigmaB\norm{\DeLT}_{\rho-3\de}\cteJT + 
   \sigmaB\CLT\cteDJT2n\norm{\DeK}_{\rho-2\de}\\\nonumber
&\leq\left(\cteJT\left(\CLT\CDeBK + \sigmaB\CDeLTK\right) +
\sigmaB\CLT\cteDJT2n\CDeK\ga\de^\tau\right)
\frac{\norm{\EK}_\rho}{\ga^3\de^{3\tau}} \\\nonumber
&\phantom{\leq} + \cteJT\left(\CLT\CDeBW + \sigmaB\CDeLTW
\right)\frac{\norm{\EW}_\rho}{\ga\de^\tau}\\\nonumber
&\leq\frac{\CDeNOTK}{\ga^3\de^{3\tau}}\norm{\EK}_\rho
+\frac{\CDeNOTW}{\ga\de^\tau}\norm{\EW}_\rho\\\nonumber
&\leq\frac{\CDeNOT}{\ga\de^\tau}\E.
\end{align}
Also, by hypothesis \eqref{eq:Hypoiter} for the tenth and
eleventh terms in \eqref{eq:deffracC}, 
\begin{align*}
   \norm{\barNO}_{\rho-3\de}&\leq\norm{\NO}_{\rho}+
   \frac{\CDeNO}{\ga\de^{\tau}}\E <\sigmaNO,\\
   \norm{\barNOT}_{\rho-3\de}&\leq\norm{\NOT}_\rho + 
      \frac{\CDeNOT}{\ga\de^{\tau}}\E<\sigmaNOT,
\end{align*}
so we have estimates \eqref{eq:estNO} and \eqref{eq:estNOT}.
Note that, according to \eqref{eq:esthatS}, the new torsion
satisfies
\[
   \norm{\barhS}_{\rho-3\de}<\ChS.
\]
Similarly as for $\DeNO$, we obtain
\begin{align}\label{eq:CbarhS-hS}
   \norm{\barhS-\hS}_{\rho-3\de}&\leq\sigmaNOT\cteOm\cteDzphiT
   \norm{\DeNO}_{\rho-3\de}+\sigmaNOT\sigmaNO\left(\cteDOm\cteDzphiT +
   \cteOm\cteDDzphiT\right)\norm{\DeK}_{\rho-2\de}\\\nonumber
    &\phantom{\leq} +
    \norm{\DeNOT}_{\rho-3\de}\cteOm\cteDzphiT\sigmaNO\\\nonumber
    & \leq \Big(\sigmaNOT\cteOm\cteDzphiT\CDeNOK +
    \sigmaNOT\sigmaNO\left(\cteDOm\cteDzphiT +
 \cteOm\cteDDzphiT\right)\CDeK\ga\de^\tau \\\nonumber
    &\phantom{\leq} +\CDeNOTK\cteOm\cteDzphiT\sigmaNO\Big)
    \frac{\norm{\EK}_\rho}{\ga^3\de^{3\tau}}
    + \Big(\sigmaNOT\cteOm\cteDzphiT\CDeNOW +
    \CDeNOTW\cteOm\cteDzphiT\sigmaNO\Big)\frac{\norm{\EW}_\rho}
    {\ga\de^\tau}\\\nonumber
    &\leq\frac{\CDehSK}{\ga^3\de^{3\tau}}\norm{\EK}_{\rho} +
    \frac{\CDehSW}{\ga\de^\tau}\norm{\EW}_\rho\\\nonumber
    &\leq\frac{\CDehS}{\ga\de^\tau}\E.
\end{align}
In order to obtain estimates \eqref{eq:estinvaverSone} and
\eqref{eq:barSone-Sone}, we need to control the reduced torsion $\S$. 
We will use Lemma \ref{lemma:invM} which
holds if
\begin{equation}\label{eq:condSone}
   \frac{\left(\sigmainvaverS\right)^2\Abs{\aver{\bar\S} -
   \aver{\S}}}{\sigmainvaverS - \Abs{\aver{\S}^{\!-1}}}<1.
\end{equation}
Observe that 
\begin{equation*}
   \Abs{\aver{\bar\S} - \aver{\S}}\leq 
   \Abs{\aver{\barhS} - \aver{\hS}}\leq
   \norm{\barhS - \hS}_{\rho-3\de},
\end{equation*}
and by hypothesis \eqref{eq:Hypoiter} for the twelfth term in
\eqref{eq:deffracC}, 
\begin{equation*}
   \frac{\left(\sigmainvaverS\right)^2\norm{\barhS - \hS}_{\rho-3\de}
   }{\sigmainvaverS - \Abs{\aver{\S}^{\!-1}}}<1.
\end{equation*}
Therefore, \eqref{eq:condSone} holds, $\aver{\bar\S}$ is invertible, and
\begin{align}\label{eq:CbarinvS-invS}
\Abs{\aver{\bar\S}^{\!-1} - \aver{\S}^{\!-1}}
&\leq\left(\sigmainvaverS\right)^2\norm{\barhS - \hS}_{\rho-3\de}\\\nonumber
&\leq\frac{\left(\sigmainvaverS\right)^2\CDehSK}{\ga^3\de^{3\tau}}
\norm{\EK}_{\rho} + \frac{\left(\sigmainvaverS\right)^2\CDehSW}{\ga\de^\tau}
\norm{\EW}_\rho\\\nonumber
&\leq\frac{\CDeinvaverSK}{\ga^3\de^{3\tau}}\norm{\EK}_{\rho}
+ \frac{\CDeinvaverSW}{\ga\de^\tau}\norm{\EW}_\rho\\\nonumber
&\leq\frac{\CDeinvaverS}{\ga\de^\tau}\E,\\\nonumber
\Abs{\aver{\bar\S}^{\!-1}}&<\sigmainvaverS.
\end{align}

For the new rate of contraction, observe that 
\begin{equation*}
   \abs{\barla}\leq \abs{\la} + \abs{\Dela}, \quad
   \abs{\barinvla}\leq \abs{\invla} + \abs{\Deinvla}.
\end{equation*}
Therefore, the hyperbolicity controls \eqref{eq:barla} hold if
\begin{equation*}
   \frac{\abs{\Dela}}{\sigmala - \abs{\la}}<1, \quad
   \frac{\abs{\Deinvla}}{\sigmainvla - \abs{\invla}}<1.
\end{equation*}
Note that from bound \eqref{eq:estDela} and hypothesis
\eqref{eq:Hypoiter} for the thirteenth term in \eqref{eq:deffracC}
\begin{equation}\label{eq:CDela}
        \frac{\abs{\Dela}}{\sigmala-\abs{\la}}\leq
        \frac{1}{\sigmala-\abs{\la}}
        \left(\frac{\CDelaK}{\ga^2\de^{2\tau}}\norm{\EK}_\rho
        + \CDelaW\norm{\EW}_\rho\right)\leq
        \frac{\CDela}{\sigmala-\abs{\la}}\E<1.
\end{equation}
Also, hypothesis \eqref{eq:Hypoiter} for the last term in
\eqref{eq:deffracC} allow us to use Lemma \ref{lemma:invM} and
obtain
\begin{align}\label{eq:Deinvla}
        \abs{\Deinvla}&\leq(\sigmainvla)^2\CDela\E,\\\nonumber
        \abs{\barinvla}&\leq\sigmainvla.
\end{align}

For estimates \eqref{eq:estEnew} we will use Lemmas
\ref{lemma:EKnew} and \ref{lemma:EWnew}. From
\eqref{eq:estEKnew} we have that
\begin{equation}\label{eq:CEKnew}
   \norm{\EKnew}_{\rho-2\de}\leq\left(\CEKKK +
   \CEKKW\right)\E^2\leq\CEK\E^2.
\end{equation}
Also, from \eqref{eq:estEWnew}, we obtain
\begin{align}\label{eq:CEWnew}
   \norm{\EWnew}_{\rho-3\de}&\leq\left(\CEWKK + \CEWWW + \CEWKW
   \right)\frac{\E^2}{\ga\de^\tau}\\\nonumber
 &\leq\frac{\CEW}{\ga\de^\tau}\E^2.
\end{align}

\end{proof}

\subsection{Convergence of the iterative procedure and proof of the
KAM theorem}\label{sec:convergence}
We show that under the assumptions of Theorem \ref{thm:KAM},
condition \eqref{eq:Hypoiter} in Lemma \ref{lemma:iter} holds
at every step of the iterative scheme. Therefore, Lemma \ref{lemma:iter}
can be applied infinitely many times. In the
limit, the sequences of objects converge and define invariant
tori and its invariant bundles.

Consider the objects at the iterative step $j$ denoted by $\K_j,
~\W_j,~\la_j,~\NO_j,~\B_j,$ and $\S_j$. Consider also the
errors $E_{\K_j}$ and $E_{\W_j}$ in the invariance 
equations \eqref{eq:defEK} \eqref{eq:defEW}, respectively, 
the analyticity domain $\rho_j$, and analyticity bite $\de_j$. For
the step $j=0$, consider the objects declared in theorem
\ref{thm:KAM}. That is, take $\K_0=\K,~\W_0=\W,~\la_0=\la,
~\NO_0=\NO,~\B_0=\B,~\S_0=\S,~\rho_0=\rho,~\de_0=\de$, and
$\E_0=\E$. 
Note that the domain of analyticity of the objects decreases at
every step according to
\[
        \rho_j=\rho_{j-1} - 3\de_{j-1},
\]
where $\de_j$ is the domain bite chosen for step $j$. Let us then
choose the following geometric sequence of bites 
\[
        \de_j=\frac{\de}{a^j},
\]
for some $a>1$---which results in the limiting width
\[
        \rho_\infty = \rho-3\de\frac{a}{a-1}
\]
and 
\[
   a=\frac{\rho-\rho_\infty}{\rho_1-\rho_\infty}.
\]
Let us now define 
\begin{align}\label{eq:CEj}
   \E_j&:=\Max{\frac{\norm{\EKj}_\rhoj}{\ga^2\de_j^{2\tau}},  
   \norm{\EWj}_\rhoj},& 
   \CEj&:=\Max{E_{\bar\K_j} a^{2\tau},
   E_{\bar\W_j} \ga\de_j},\\\nonumber
   \vka_j&:=\frac{\E_j}{\ga^2\de^{2\tau}_j} , &
   \ka_j&:=a^{2\tau}\CEj\vka_j
\end{align}
and assume \eqref{eq:Hypoiter} in Lemma \ref{lemma:iter} holds
for the objects up to step $j-1$. Then, 
\[
   \vka_j\leq
   a^{2\tau}\CEjmo\vka_{j-1}^2\leq\ka_{j-1}\vka_{j-1}
\]
and
\[
   \ka_{j}\leq a^{2\tau}\CEj\ka_{j-1}\vka_{j-1}\leq\ka_{j-1}^2,
\]
where the last inequality holds as long as $\CE_j\le\CEjmo$.
In order to apply Lemma
\ref{lemma:iter} again we need \eqref{eq:Hypoiter} to hold for
the objects in the step $j$. 
Notice that $\CDe$ in \eqref{eq:deffracC}, as well as the
constants therein, depend on $\de_j$---polynomially by
construction---and on $\hnu_j$ and $\nu_j$.
See Appendix \ref{ap:constants} and definitions 
\eqref{eq:def_hnu} and \eqref{eq:def_nu}. Let us now
assume $\ka:=\ka_0<1$ and $\nu_0<1$,
which are included in assumption
\ref{eq:HKAM}. Recall that $\hnu\leq\nu$. 
Additionally, the assumption $\ka<1$ 
implies decreasing sequences for
$\hnu_j,~\nu_j,~\CEj,~\ka_j,~\vka_j,$ and $\E_j$. In particular, 
\begin{gather}\nonumber
   \ka_j\leq\ka_{j-1}^2\leq\ka^{2^j}\\\label{eq:Ekapa}
   \E_j\leq\ga^2\de_j^{2\tau}\vka_j\leq
   \ga^2\de_j^{2\tau}\ka_0^{2^{j-1}}\vka_0
   \leq\left(\frac{\ka}{a^{2\tau}}\right)^j\E.
\end{gather}
In order to prove convergence, it will be necessary to control the
constants within $\CDe$ uniformly with respect to $j$. Hence,
the constants within $\CDe$ are taken for the largest values
of $\de,~\hnu,$ and $\nu$, i.e., $\de=\de_0,~\hnu=\hnu_0$, and
$\nu=\nu_0$.  Nonetheless, the explicit $\de$ in $\CDe$ can
be taken to be the corresponding ones for the step $j$, i.e., they
can be taken to be $\de=\de_j$.

We will now use estimate \eqref{eq:Ekapa} to check condition
\eqref{eq:Hypoiter} for step $j$.
For the first term in \eqref{eq:deffracC}, condition
\eqref{eq:Hypoiter} reads 
\begin{equation*}
\CEsym\E_j<1
\end{equation*}
and, using \eqref{eq:Ekapa}, we have
\begin{equation*}
   \CEsym\E_j\leq\CEsym\left(\frac{\ka}{a^{2\tau}}\right)^j\E
   \leq\CEsym\E<1
\end{equation*}
where the last inequality holds from the assumption $\nu_0<1$.
For the second term in \eqref{eq:deffracC}, 
let us first compute
\begin{equation}\label{eq:CDeK}
   \norm{\K_j-\K}_\rhoj\leq\sum_{i=0}^{j-1}\norm{\K_{i+1}-\K_{i}}_{\rho_{i+1}}
   \leq\CDeK\E\sum_{i=0}^{j-1}\left(\frac{\ka}{a^{2\tau}}\right)^i
   \leq\frac{a^{2\tau}}{a^{2\tau}-\ka}\CDeK \E:=\fracCDeK\E,
\end{equation}
where we used \eqref{eq:barK-K} and \eqref{eq:Ekapa}. Notice that 
the previous bound holds for all $j$---hence, we can obtain
estimate \eqref{eq:KAMK}. Similarly, using Lemma
\ref{lemma:iter}, we obtain the estimates
\eqref{eq:KAMDteK}-\eqref{eq:KAMinvla}.
Let us now consider the inequality
\[
   \frac{\ga^2\de^{2\tau}_j\E_j}{R-\norm{\K_j-\K}_{\rho_j}}<1
\]
which is satisfied if
\begin{equation*}
   \ga^2\de_j^{2\tau}\E_j + \fracCDeK\E<R
\end{equation*}
or, alternatively,
\begin{equation*}
\frac{\ga^2\de^{2\tau} +\fracCDeK}{R}\E<1,
\end{equation*}
where we used that $\de_j<\de$ and $\E_j<\E$.  
It is not hard to show that the previous condition implies
\eqref{eq:Hypoiter} for the second term in \eqref{eq:deffracC} at
every step; which is included in assumption \eqref{eq:HKAM}.
In order to express condition \eqref{eq:Hypoiter} for the third
term in \eqref{eq:deffracC}, we need to control
$\norm{\Dif_\te\K_j}_\rhoj$. Notice that we can recurrently
obtain the bound
\begin{align*}
   \norm{\Dif_\te\K_j}_\rhoj&\leq\norm{\Dif_\te\K}_{\rho}
   +\sum_{i=0}^{j-1}\frac{d\CDeK}{\de_i}\E_i\\
    &\leq \norm{\Dif_\te\K}_{\rho} +
   \frac{d\CDeK}{\de}\sum_{i=0}^{\infty}
   \left(\frac{\ka}{a^{2\tau-1}}\right)^i\E\\
    &\leq \norm{\Dif_\te\K}_{\rho} +
    \frac{d\CDeK a^{2\tau-1}}{\de(a^{2\tau-1} -
    \ka)}\E<\sigmaDteK,
\end{align*}
where the last inequality holds if 
\begin{equation*}
   \frac{d\CDeK}{\de(\sigmaDteK - \norm{\Dif_\te
   \K}_{\rho})}\frac{a^{2\tau-1}}{a^{2\tau-1}-\ka}\E<1,
\end{equation*}
which is included in assumption \eqref{eq:HKAM}. Notice that this
condition guarantees \eqref{eq:Hypoiter} for the third
term in \eqref{eq:deffracC}. 
We can proceed similarly and obtain conditions such that
\eqref{eq:Hypoiter} holds at every step for the remaining
terms in \eqref{eq:deffracC}. Such conditions are then included
in \eqref{eq:HKAM} for which we have the explicit constants
\begin{equation}\label{eq:fracC}
   \fracC:=\max\Bigg\{a^{2\tau}C_{\E} ,\
   \fracCDe\ga\de^\tau
   \Bigg\},
\end{equation}
\begin{align*}
   \fracCDe:=\max\Bigg\{&
   \CEsym\ga\de^\tau,\ 
   \frac{\fracCtU}{R}\ga\de^\tau,\
   \frac{\fracCDteK}{\sigmaDteK-\norm{\DteK}_{\rho}}\ga\de^{\tau-1},\
   \frac{\fracCDvpK}{\sigmaDvpK-\norm{\DvpK}_{\rho}}\ga\de^{\tau-1},\\ 
   &\frac{\fracCDteKT}{\sigmaDteKT-\norm{(\DteK)^\ttop}_{\rho}}\ga\de^{\tau-1},\
   \frac{\fracCDvpKT}{\sigmaDvpKT-\norm{(\DvpK)^\ttop}_{\rho}}\ga\de^{\tau-1},\
   \frac{\fracCW}{\sigmaW-\norm{\W}_{\rho}},\\
   &\frac{\fracCWT}{\sigmaWT-\norm{\W^\ttop}_{\rho}},\
   \frac{\fracCDeB}{\sigmaB-\norm{\B}_{\rho}},\
   \frac{\fracCNO}{\sigmaNO-\norm{\NO}_{\rho}},\
   \frac{\fracCNOT}{\sigmaNOT-\norm{\NOT}_{\rho}},\
   \frac{\fracCDeinvaverS}{\sigmainvaverS-\abs{\aver{\S}^{\!-1}}},\\
   &\frac{\fracCDela}{\sigmala-\abs{\la}}\ga\de^\tau,\
   \frac{\fracCDeinvla}{\sigmainvla-\abs{\invla}}\ga\de^\tau
   \Bigg\},
\end{align*}
where 
\begin{align*}
&\fracCtU:=\ga^2\de^{2\tau}+ \fracCDeK,
& &\fracCDteK:=\frac{a^{2\tau-1}}{a^{2\tau-1} - \ka}d\CDeK,
& &\fracCDvpK:=\frac{a^{2\tau-1}}{a^{2\tau-1} - \ka}\ell\CDeK,\\
&\fracCDteKT:=\frac{a^{2\tau-1}}{a^{2\tau-1} - \ka}2n\CDeK,
& &\fracCDvpKT:=\fracCDteKT,
& &\fracCW:=\frac{a^\tau}{a^\tau-\ka}\CDeW,\\
&\fracCWT:=\frac{a^\tau}{a^\tau-\ka}2n\CDeW,
& &\fracCDeB:=\frac{a^\tau}{a^\tau-\ka}\CDeB,
& &\fracCNO:=\frac{a^\tau}{a^\tau-\ka}\CDeNO,\\
& \fracCNOT:=\frac{a^\tau}{a^\tau-\ka}\CDeNOT,
& &\fracCDeinvaverS:=\frac{a^\tau}{a^\tau-\ka}\CDeinvaverS,
& &\fracCDela:=\frac{a^{2\tau}}{a^{2\tau}-\ka}\CDela\\
& & &\fracCDeinvla:=\frac{a^{2\tau}}{a^{2\tau}-\ka}\CDeinvla. &
&  
\end{align*}
Observe that the first term in \eqref{eq:fracC} corresponds to
the condition of having a decreasing sequence of errors whereas
the second term corresponds to the conditions necessary for
applying Lemma \ref{lemma:iter} infinitely many times.

\section{Acknowledgments}
This work has been supported by the Spanish grants
PID2021-125535NB-I00 and PID2020-118281GB-C31,
the Catalan grant 2021 SGR 00113, the Severo Ochoa and Mar\'{\i}a
de Maeztu Program for Centers and Units of Excellence in R\&D
(CEX2020-001084-M) of the Spanish Research Agency, the
Secretariat for Universities and Research of the Ministry of
Business and Knowledge of the Government of Catalonia, and by the
European Social Fund. 

\bibliographystyle{alpha}
\bibliography{references}

\newpage
\appendix
\newgeometry{left=15mm, right=15mm, top=25mm, bottom=25mm, bindingoffset=6mm}
\section{Compendium of constants involved in the KAM theorem}\label{ap:constants}
\color{black}
\bgroup
{ \def\arraystretch{1.5} \begin{longtable}{|l l l|}
\caption{Constants in Section \ref{sec:frames control}}
\label{tab:}\\
\hline
Object & Constant & Label  \\
\hline
\hline
$\cX$ & 
$\CcX=\cteX + \sigmaDvpK\abs{\hat\al}$ &
\eqref{eq:defCcX}\\
\hline
$\cXT$ & 
$\CcXT=\cteXT + \abs{\hat\al^\ttop}\sigmaDvpKT$ &
\eqref{eq:defCcX}\\
\hline
$\L$ & 
$\CL=\sigmaDteK + \CcX + \sigmaW$ &
\eqref{eq:defCL}\\
\hline
$\LT$ & 
$\CLT=\Max{\sigmaDteKT,\CcXT,\sigmaWT}$ &
\eqref{eq:defCL}\Bspace{1.8}\\
\hline
$\hP$ & 
$\ChP=\CL+\sigmaNO$ &
\eqref{eq:esthP}\\
\hline
$\hPT$ & 
$\ChPT=\Max{\CLT,\ \sigmaNOT}$ &
\eqref{eq:esthP}\\
\hline
$\hS$ & 
$\ChS=\sigmaNOT \cteOm\cteDzphiT\sigmaNO$ &
\eqref{eq:esthatS}\\
\hline
$\hS^\ttop$ & 
$\ChST=\sigmaNOT \cteOm\cteDzphiTT\sigmaNO$ &
\eqref{eq:esthatS}\\
\hline
$\A$ & 
$\CA=\frac{1}{1-(\sigmala)^2}\left(\ChS\sigmala
+ \ChST\right)$ &
\eqref{eq:estA}\\ [+0.5ex]
\hline
$\AT$ & 
$\CAT= \frac{1}{1-(\sigmala)^2}
\left(\ChST\sigmala +\ChS \right)$&
\eqref{eq:estAT}\\ [+0.5ex]
\hline
$\N$ & 
$\CN=\sigmaNO +\CL\CA$ &
\eqref{eq:estN}\\
\hline
$\NT$ & 
$\CNT=\sigmaNOT + \CAT\CLT$ &
\eqref{eq:estNT}\\
\hline
$\P$ & 
$\CP=\CL+\CN$ &
\eqref{eq:estP}\\
\hline
$\PT$ & 
$\CPT=\Max{\CLT,\ \CNT}$ &
\eqref{eq:estPT}\\
\hline
\end{longtable}
}
\egroup

\newpage

\bgroup
{
\def\arraystretch{1.5}
\begin{longtable}{| l l l|}
\caption{Constants in Lemmas~\ref{lemma:invL},
 \ref{lemma:ap_iso}, and \ref{lemma:ap_Lag}.
\label{tab:ap_lemas}}\\
\hline
Object & Constant & Label \\ 
\hline
\hline
$\EcX$ &
$\CEcX=\cteDzX \de + \ell\abs{\hat\al}$ &
\eqref{eq:estEcX}\\
\hline
$\EcX^\ttop$ & 
$\CEcXT=2n \left(\cteDzXT \de +\Abs{\hat\al^\ttop}\right)$ &
\eqref{eq:estEcXT} \\
\hline
$\EL$ &
$\CELK=d+\CEcX,\quad \CELW=1 $ &
\eqref{eq:estEL} \\
\hline
$\EL^\ttop$ &
$\CELTK=\Max{2n,\ \CEcXT},\quad \CELTW=2n$ &
\eqref{eq:estELT}\Bspace{1.8} \\
\hline
$\Lop\OmDK$ &
$\CLieOmDK=\sigmaDteKT\cteDOm\sigmaDteK\de+\sigmaDteKT\cteOm d + 2 n
\cteOm\cteDzphiT\sigmaDteK$ &
\eqref{eq:estLieOmK} \\
\hline
$\Lop\OmDKcX$ &
$\CLieaone=\sigmaDteKT\cteDOm\CcX\de
    + \sigmaDteKT\cteOm\CEcX + 2n\cteOm\cteDzphiT\CcX$ &
\eqref{eq:estLieOmDKcX} \\
\hline
$\Lop^{1\la}\OmDKW$ &
$\CLieatwoK=\sigmaDteKT\cteDOm\sigmaW\sigmala
      \de +2n\cteOm\cteDzphiT\sigmaW, \quad 
\CLieatwoW=\sigmaDteKT \cteOm$ &
\eqref{eq:estLieOmDKW} \\[+0.3ex]
\hline
$\Lop\OmcXDK$ &
$\CLieathree=\CcXT\cteDOm\sigmaDteK\de + \CcXT\cteOm d + 
\CEcXT\cteOm\cteDzphiT\sigmaDteK$ &
\eqref{eq:estLieOmcXDK} \\[+0.2ex]
\hline
$\Lop^{1\la}\OmcXW$ &
$\CLieafiveK=\CcXT\cteDOm\sigmaW\sigmala\de
    + \CEcXT\cteOm\cteDzphiT\sigmaW , \quad 
\CLieafiveW = \CcXT\cteOm$ &
\eqref{eq:estLieOmcXW} \\[+0.3ex]
\hline
$\Lop^{1\la}\OmWDK$ &
$\CLieasixK=\sigmala\sigmaWT\cteDOm\sigmaDteK\de
     + \sigmala\sigmaWT\cteOm d ,\quad 
\CLieasixW=2n\cteOm\cteDzphiT\sigmaDteK$ &
\eqref{eq:estLieOmWDK} \\[+0.3ex]
\hline
$\Lop^{1\la}\OmWcX$ &
$\CLieasevenK=\sigmala\sigmaWT\cteDOm\CcX\de
     + \sigmala\sigmaWT\cteOm\CEcX ,\quad
\CLieasevenW=2n\cteOm\cteDzphiT\CcX$&
\eqref{eq:estLieOmWcX} \\[+0.3ex]
\hline
$\OmDK$ &
$\COmDK=\CR\CLieOmDK$ &
\eqref{eq:estOmDK} \\
\hline
$\OmDKcX$ &
$\Caone=\CR \CLieaone$ &
\eqref{eq:estOmDKcX} \\
\hline
$\OmDKW$ &
$\CatwoK=\frac{1}{1-\sigmala} \CLieatwoK, \quad 
\CatwoW=\frac{1}{1-\sigmala}
\CLieatwoW$ &
\eqref{eq:estOmDKW} \\ [+0.3ex]
\hline
$\OmcXDK$ &
$\Cathree=\CR \CLieathree$ &
\eqref{eq:estOmcXDK} \\
\hline
$\OmcXW$ &
$\CafiveK= \frac{1}{1-\sigmala} \CLieafiveK, \quad 
\CafiveW= \frac{1}{1-\sigmala} \CLieafiveW$ &
\eqref{eq:estOmcXW} \\ [+0.5ex]
\hline
$\OmWDK$ &
$\CasixK=\frac{1}{1-\sigmala} \CLieasixK,\quad 
\CasixW= \frac{1}{1-\sigmala} \CLieasixW$ &
\eqref{eq:estOmWDK} \\[+0.5ex]
\hline
$\OmWcX$ &
$\CasevenK= \frac{1}{1-\sigmala} \CLieasevenK,\quad 
\CasevenW=\frac{1}{1-\sigmala} \CLieasevenW$ &
\eqref{eq:estOmWcX} \\ [+0.5ex]
\hline
$\OmL$ &
$\COmLK=\max\Big\{\COmDK + \Caone + \CatwoK\ga\de^\tau,\
   \Cathree+\CafiveK\ga\de^\tau,\ \Big(\CasixK+
\CasevenK\Big)\ga\de^\tau \Big\}$ & \\[+0.5ex]
& $\COmLW=\max\Big\{\CatwoW,\ \CafiveW,\ \CasixW
+ \CasevenW \Big\} $ &
\eqref{eq:estOmL} \\ [+0.5ex]
\hline
\end{longtable}
}
\egroup

\newpage

\bgroup
{
\def\arraystretch{1.5}
\begin{longtable}{|l l l|}
   \caption{Constants in Lemmas~\ref{lemma:ap_sym_hP},
      \ref{lemma:ap_red_hP}, \ref{lemma:ap_inv_hP}, \ref{lemma:esthSsym}, \ref{lemma:estA-AT}, 
   \ref{lemma:ap_sym}, and \ref{lemma:ap_red}}\label{tab:sym-red}\\
\hline
Object & Constant & Label  \\
\hline
\hline
$\hEsym$ & 
$\ChEsymK=\max\left\{1,\ (\sigmaB)^2\right\}\COmLK,\quad 
\ChEsymW=\max\left\{1,\ (\sigmaB)^2\right\}\COmLW$ &
\eqref{eq:esthEsym}\\
\hline
$\hEred^{11}$ &
$\ChEredaaK= \sigmaNOT\cteOm\CELK,\quad 
\ChEredaaW= \sigmaNOT\cteOm\CELW$ &
\eqref{eq:esthEred11}\Bspace{2.2} \\
\hline
$\hEred^{21}$ & 
$\ChEredbaK=\COmLK + \CLT\cteOm\CELK\ga\de^\tau, \quad 
\ChEredbaW=\COmLW + \CLT\cteOm\CELW $ & 
\eqref{eq:esthEred21}\Bspace{2.2} \\
\hline
$\hEred^{22}$ &
$\ChEredbbK=\CLT\cteDOm\cteDzphiT\sigmaNO\de +
   \sigmainvla\CELTK\cteOm\cteDzphiT\sigmaNO, \quad
\CEredbbW =\sigmainvla\CELTW\cteOm\cteDzphiT\sigmaNO$ &
   \eqref{eq:esthEred22} \\ [+0.75ex]
\hline
$\hEred$ &
$\ChEredK=\max\Big\{\ChEredaaK\ga\de^\tau,\ \ChEredbaK
+\ChEredbbK\ga\de^\tau\Big\}
,\quad
\ChEredW=\max\Big\{\ChEredaaW,\ \ChEredbaW+\ChEredbbW\Big\} $ &
\eqref{eq:esthEred} \\[+1ex]
\hline
$\EinvhP$ & 
$\CEinvhPK=\frac{1}{1-\hnu}\ChP\ChPT\cteOm\CEsymK,\quad
\CEinvhPW=\frac{1}{1-\hnu}\ChP\ChPT\cteOm\CEsymW$ & 
\eqref{eq:EinvhP} \\
\hline
$\EsyminvLahS$ & 
$\CinvLahSK=(\sigmaB)^2\COmLK+\sigmaNOT\cteDzphiTT\Big(
   \ChS\left(\cteDOm\CL\de + \sigmainvla\cteOm\CELK\right)\ga\de^\tau
   +\cteDOm\cteDzphiT\sigmaNO\ga\de^{\tau+1}$  & \\
 &
$\phantom{\CinvLahSK=}+
\cteOm\cteDzphiT\sigmaNO\CEinvhPK +
   \cteOm\ChP\CEredbbK\ga\de^\tau\Big)$ & \\
 & $\CinvLahSW = (\sigmaB)^2\COmLW + \sigmaNOT\cteDzphiTT\cteOm\Big(
\ChS\sigmainvla\CELW + \cteDzphiT\sigmaNO\CEinvhPW
    + \ChP\CEredbbW\Big)$ 
 & \eqref{eq:esthSsym} \\[.5ex]
\hline
$\EsymA$ & 
$\CEsymAK=\frac{1}{1-\sigmala}\CinvLahSK,\quad
\CEsymAW=\frac{1}{1-\sigmala}\CinvLahSW$ & 
\eqref{eq:estA-AT}\\ [+.3ex]
\hline
$\Esym$ &
$\CEsymK=\left(1+\CAT\right)\left(1+\CA\right)\ChEsymK
 + \CEsymAK$ &  \\
& $\CEsymW=\left(1+\CAT\right)\left(1+\CA\right)\ChEsymW 
 + \CEsymAW$ & \eqref{eq:estEsym} \\
\hline
$\Ered$ &
$\CEredK= \left(1+\CAT\right)\left(1+\CA\right)\ChEredK,\quad  
\CEredW=\left(1+\CAT\right)\left(1+\CA\right)\ChEredW$
& \eqref{eq:estEred}\Bspace{2} \\
\hline
\end{longtable}
}
\egroup

\newpage

\bgroup
{
\def\arraystretch{1.5}
\begin{longtable}{|l l l|}
   \caption{Constants in Lemma \ref{lemma:EKnew}}
   \label{tab:} \\
\hline
Object & Constant & Label \\
\hline
\hline
$\etaaK$ &
$\CetaoneK=\CNT\cteOm$ &
\eqref{eq:estetaK1}\\
\hline 
$\etabK$ &
$\CetatwoK=\CNT\cteOm$ &
\eqref{eq:estetaK2}\\
\hline 
$\etacK$ &
$\CetathreeK=\Max{\sigmaDteKT,\ \CcXT} $ &
\eqref{eq:estetaK3}\\ [+0.5ex]
\hline
$\aver{\etacK}$ &
$\CaveretathreeoneK=2n\cteDa +
\sigmaDteKT\cteDDa\tfrac{\de}{2},\quad 
\CaveretathreetwoK=2n\ell\cteDaT\abs{\hat\al}
   + (\cteDDzH + 2n\ell\cteDDa\sigmaDvpK\abs{\hat\al})\tfrac{\de}{2}$ & 
\\
 &
$\CaveretathreeK=\Max{\CaveretathreeoneK,\ \CaveretathreetwoK}$ &
\eqref{eq:estavereta31}-\eqref{eq:estavereta3}\\
\hline
$\etadK$ &
$\CetafourK=\sigmaWT\cteOm$ &
\eqref{eq:estetaK4}\\
\hline 
$\aver{\xicK}$ &
$\CaverxithreeK=\sigmainvaverS\left(\CetaoneK\ga\de^\tau
   + \ChS\CR\CetathreeK \right)$ &
   \eqref{eq:estaverxiK3}\Bspace{1}\\
\hline 
$\xiaK$ &
$\CxioneK=\CR\left(\CetaoneK\ga\de^\tau +
         \ChS\left(\CR\CetathreeK +
      \CaverxithreeK\right)\right)$ &
\eqref{eq:estxiK1}\\[+0.5ex]
\hline 
$\xibK$ &
$\CxitwoK=\frac{1}{1-\sigmala}\CetatwoK$ &
\eqref{eq:estxiK2}\\[+0.5ex]
\hline 
$\xicK$ &
$\CxithreeK=\CR\CetathreeK +
   \CaverxithreeK$ &
   \eqref{eq:estxiK3}\Bspace{1.9}\\
\hline 
$\xidK$ &
$\CxifourK=\frac{\sigmala}{1-\sigmala}\CetafourK$ &
\eqref{eq:estxiK4}\\
\hline 
$\xiK$ &
$\CxiK=\Max{\CxioneK,\ \CxitwoK\ga^2\de^{2\tau},\
\CxithreeK\ga\de^\tau,\ \CxifourK\ga^2\de^{2\tau}}$ &
\eqref{eq:estxiK}\Bspace{1.8}\\
\hline 
$\DeK$ &
$\CDeK=
\CL\Max{\CxioneK,\ \CxitwoK\ga^2\de^{2\tau}} + \CN\Max{\CxithreeK\ga\de^\tau,\
      \CxifourK\ga^2\de^{2\tau}}$ &
      \eqref{eq:estDeK}\Bspace{1.8}\\
\hline 
$\ElinK$ &
$\CElinKK= \left(\CEredK + \CEsymK\right)\CxiK +
\CaveretathreeK\ga^3\de^{3\tau}, \quad 
\CElinKKW=\left(\CEredW+\CEsymW\right)\CxiK$ &
\eqref{eq:estElinK}\\[+0.4ex]
\hline 
$\EKnew$ &
$\CEKKK=\frac{\CP}{1-\nu}\CElinKK\ga\de^{\tau-1}
+\frac{1}{2}\cteDDzphiT(\CDeK)^2, \quad
\CEKKW=\frac{\CP}{1-\nu}\CElinKKW$ &
\eqref{eq:estEKnew2}\\[+0.4ex]
\hline 
\end{longtable}
}
\egroup

\newpage

\bgroup
{
\def\arraystretch{1.5}
\begin{longtable}{|l l l|}
\caption{Constants in Lemma  \ref{lemma:EWnew}} \label{tab:} \\
\hline
Object & Constant & Label \\
\hline
\hline
$\tildeEW$ &
$\CtildeEK=\cteDDzphiT\CDeK\sigmaW,\quad
\CtildeEW=1$ &
\eqref{eq:esttildeEW}\\[+0.4ex]
\hline 
$\etaaW$ &
$\CetaoneWK=\CNT\cteOm\CtildeEK, \quad 
\CetaoneWW=\CNT\cteOm$ &
\eqref{eq:estetaW1}\\ [+0.3ex]
\hline 
$\etabW$ &
$\CetatwoWK= \CNT\cteOm\CtildeEK,\quad 
\CetatwoWW = \CNT\cteOm $&
\eqref{eq:estetaW2}\\
\hline 
$\etacW$ &
$\CetathreeWK =\Max{\sigmaDteKT,\ \CcXT}\cteOm\CtildeEK , \quad
\CetathreeWW = \Max{\sigmaDteKT,\ \CcXT}\cteOm$ &
\eqref{eq:estetaW3}\\[+0.5ex]
\hline 
$\etadW$ &
$\CetafourWK =\sigmaWT\cteOm\CtildeEK  ,\quad
\CetafourWW = \sigmaWT\cteOm$ &
\eqref{eq:estetaW4}\\[+0.5ex]
\hline 
$\xiaW$ &
$\CxioneWK=\frac{1}{1-\sigmala}\left(\CetaoneWK +
\frac{\ChS\CetathreeWK}{1-\sigmala}\right) ,\quad 
\CxioneWW = \frac{1}{1-\sigmala}\left(\CetaoneWW+
\frac{\ChS\CetathreeWW}{1-\sigmala}\right)$ &
\eqref{eq:estxiW1}\\[+2.ex]
\hline 
$\xibW$ &
$\CxitwoWK = \CR\sigmainvla\CetatwoWK,\quad 
\CxitwoWW = \CR\sigmainvla\CetatwoWW$ &
\eqref{eq:estxiW2}\\[+0.5ex]
\hline 
$\xicW$ &
$\CxithreeWK = \frac{1}{1-\sigmala}\CetathreeWK,\quad 
\CxithreeWW = \frac{1}{1-\sigmala}\CetathreeWW$ &
\eqref{eq:estxiW3}\\[+0.5ex]
\hline 
$\xidW$ &
$\CxifourWK = \frac{\sigmala}{1-(\sigmala)^2}\CetafourWK,\quad 
\CxifourWW = \frac{\sigmala}{1-(\sigmala)^2}\CetafourWW$ &
\eqref{eq:estxiW4}\\[+0.7ex]
\hline 
$\xiW$ &
$\CxiWK =
\max\Big\{\CxioneWK\ga\de^\tau,\
\CxitwoWK,\ \CxithreeWK\ga\de^\tau,\ \CxifourWK\ga\de^\tau
\Big\}$ & \\
& $\CxiWW =  \max\Big\{\CxioneWW\ga\de^\tau,\
\CxitwoWW,\ \CxithreeWW\ga\de^\tau,\ \CxifourWW\ga\de^\tau \Big\}
$ &
\eqref{eq:estxiW}\\ [+0.5ex]
\hline 
$\DeW$ &
$\CDeWK =
\CL\Max{\CxioneWK\ga\de^\tau,\ \CxitwoWK} + \CN\Max{\CxithreeWK,\
  \CxifourWK}\ga\de^\tau$ &
\\
& $\CDeWW=\CL\Max{\CxioneWW\ga\de^\tau,\ \CxitwoWW} 
 + \CN \Max{\CxithreeWW,\ \CxifourWW}\ga\de^\tau$ &
\eqref{eq:estDeW}\Bspace{1.9}\\
\hline 
$\Dela$ &
$\CDelaK =\CetatwoWK,\quad
\CDelaW = \CetatwoWW$ &
\eqref{eq:estDela}\\[+0.5ex]
\hline 
$\ElinW$ &
$\CElinWKK=\CEredK\CxiWK + \CEsymK\CxiWK\sigmala + 
\CEsymK\CDelaK\ga\de^\tau$ &
\\
& $\CElinWKW=\CEredW\CxiWK + \CEsymW\CxiWK 
   + \left( \CEredK\CxiWW +  \CEsymK\CxiWW \right)\ga\de^{\tau-1}
   $ &
\\
 &  $\phantom{\CElinWKW=}+  \CEsymW\CDelaK\ga\de^\tau +
   \CEsymK\CDelaW\ga^2\de^{2\tau-1} $ &
\\
& $\CElinWWW=\CEredW\CxiWW +\CEsymW\CxiWW\sigmala +
      \CEsymW\CDelaW\ga\de^\tau$ &
\eqref{eq:estElinW}\\[+0.5ex]
\hline
$\DeDzphiT[\DeW]$ &
$\CDeDzphiTKK=\cteDDzphiT\CDeK\CDeWK,\quad
\CDeDzphiTKW=\cteDDzphiT\CDeK\CDeWW$ &
\eqref{eq:est1}\\
\hline 
$\DeW\Dela$ & 
$\CDeWDelaKK = \CDeWK\CDelaK,\quad 
\CDeWDelaKW = \CDeWK\CDelaW + \CDeWW\CDelaK,\quad
\CDeWDelaWW = \CDeWW\CDelaW$ &
\eqref{eq:est2}\\
\hline
$\EWnew$ & 
$\CEWKK =\frac{\CP}{1-\nu}\CElinWKK\ga\de^{\tau-1} +
     \CDeDzphiTKK + \CDeWDelaKK $ & 
\\
 & $
   \CEWKW =\frac{\CP}{1-\nu}\CElinWKW + \CDeDzphiTKW +
    \CDeWDelaKW, \quad
     \CEWWW=\frac{\CP}{1-\nu}\CElinWWW + \CDeWDelaWW$  &
 \eqref{eq:estEWnew2}\\[+0.9ex]
\hline
\end{longtable}
}
\egroup

\newpage

\bgroup
{
\def\arraystretch{1.5}
\begin{longtable}{|l l l|}
   \caption{Constants in Lemma \ref{lemma:iter}} \label{tab:} \\
\hline
Object & Constant & Label \\
\hline
\hline
$\hEsym$ & 
$\ChEsym=\ChEsymK\ga\de^{\tau-1} + \ChEsymW$ & 
\eqref{eq:def_hnu}\\ [0.5ex]
\hline
$\Esym$ & 
$\CEsym=\CEsymK\ga\de^{\tau-1} + \CEsymW$ & 
\eqref{eq:def_nu}\\[0.2ex]
\hline
$\barW-\W$ & 
$\CDeW = \CDeWK + \CDeWW$ & 
\eqref{eq:CDeW}\\
\hline
$\barL-\L$ & 
$\CDeLK =\left(d+\ell\abs{\hatal} +
\cteDzX\de\right)\CDeK\ga\de^{\tau-1} + \CDeWK
 ,\quad 
\CDeLW = \CDeWW,\quad 
\CDeL = \CDeLK + \CDeLW$ & 
\eqref{eq:CbarL-L}\\
\hline
$\barL^\ttop - \L^\ttop$ & 
$\CDeLTK =2n\left(\max\Big\{1, \cteDzXT\de +
\abs{\hatal^\ttop} \Big\}\CDeK\ga\de^{\tau-1}
     + \CDeWK\right), \quad 
\CDeLTW =2n\CDeWW$ &\\
 & $\CDeLT=\CDeLTK+\CDeLTW$ & 
\eqref{eq:CbarLT-LT}\\
\hline
$\GbarL - \GL$ & 
$\CDeGLK =\CLT\cteDG\CL\CDeK\ga\de^\tau + \CLT\cteG\CDeLK +
\cteG\CL\CDeLTK $ & \\
& $\CDeGLW =\CLT\cteG\CDeLW + \cteG\CL\CDeLTW ,\quad 
\CDeGL= \CDeGLK + \CDeGLW  $ & 
\eqref{eq:CbarGL-GL}\\
\hline
$\barB - \B$ & 
$\CDeBK =(\sigmaB)^2\CDeGLK ,\quad
\CDeBW = (\sigmaB)^2\CDeGLW,\quad
\CDeB = \CDeBK + \CDeBW$ & 
\eqref{eq:CbarB-B}\\
\hline
$\barNO-\NO$ & 
$\CDeNOK =\cteJ\left(\CL\CDeBK + \CDeLK\sigmaB\right)
 + \cteDJ\CDeK\CL\sigmaB\ga\de^\tau $ & 
\\
 & $\CDeNOW=\cteJ\left(\CL\CDeBW + 
\CDeLW\sigmaB\right) ,\quad
 \CDeNO=\CDeNOK + \CDeNOW$ & 
\eqref{eq:CbarNO-NO}\\
\hline
$\barNOT -\NOT$ & 
$\CDeNOTK=\cteJT\left(\CLT\CDeBK + \sigmaB\CDeLTK\right) +
\sigmaB\CLT\cteDJT2n\CDeK\ga\de^\tau$ & 
\\
 & $\CDeNOTW =\cteJT\left(\CLT\CDeBW + \sigmaB\CDeLTW
\right), \quad
\CDeNOT = \CDeNOTK+\CDeNOTW$ & 
\eqref{eq:CbarNOT-NOT}\\
\hline
$\barhS-\hS$ & 
$\CDehSK=\sigmaNOT\cteOm\cteDzphiT\CDeNOK +
    \sigmaNOT\sigmaNO\left(\cteDOm\cteDzphiT +
 \cteOm\cteDDzphiT\right)\CDeK\ga\de^\tau +
\CDeNOTK\cteOm\cteDzphiT\sigmaNO$ & 
\\
 & $\CDehSW=\sigmaNOT\cteOm\cteDzphiT\CDeNOW +
    \CDeNOTW\cteOm\cteDzphiT\sigmaNO,\quad
\CDehS=\CDehSK + \CDehSW$ & 
\eqref{eq:CbarhS-hS}\\
\hline
$\aver{\bar\S}^{\!-1} - \aver{\S}^{\!-1}$ & 
$\CDeinvaverSK=\left(\sigmainvaverS\right)^2\CDehSK ,\quad
\CDeinvaverSW =\left(\sigmainvaverS\right)^2\CDehSW ,\quad
\CDeinvaverS = \CDeinvaverSK + \CDeinvaverSW$ & 
\eqref{eq:CbarinvS-invS}\Bspace{1.5}\\
\hline
$\Dela$ & 
$\CDela = \CDelaK + \CDelaW$ & 
\eqref{eq:CDela}\\
\hline
$\Deinvla$ & 
$\CDeinvla = (\sigmainvla)^2\CDela$ & 
\eqref{eq:Deinvla}\\
\hline
$\EKnew$ & 
$\CEK=\CEKKK + \CEKKW$ & 
\eqref{eq:CEKnew}\Bspace{1.5}\\
\hline
$\EWnew$ & 
$\CEW=\CEWKK + \CEWWW + \CEWKW$ & 
   \eqref{eq:CEWnew}\\
\hline  
$\E$ & 
$\CDe=\max\Bigg\{
      {\scriptstyle\CEsym\ga\de^{\tau}},\
      \tfrac{\Max{\ga^2\de^{2\tau},\CDeK}}{R-\norm{\K-\KO}_\rho}\ga\de^\tau,\
      \tfrac{d\CDeK}{\sigmaDteK - \norm{\DteK}_\rho}\ga\de^{\tau-1},\
   \tfrac{\ell\CDeK}{\sigmaDvpK -
\norm{\DvpK}_\rho}\ga\de^{\tau-1},$ & \Tspace{4.5} 
\\
& $\phantom{\CDe=\max\Big(}\tfrac{2n\CDeK}{\sigmaDteKT -
   \norm{\DteKT}_\rho}\ga\de^{\tau-1},\ 
   \tfrac{2n\CDeK}{\sigmaDvpKT - \norm{\DvpKT}_\rho}\ga\de^{\tau-1},\ 
\tfrac{\CDeW}{\sigmaW-\norm{\W}_\rho},\
\tfrac{2n\CDeW}{\sigmaWT-\norm{\WT}_\rho},$ & \\
& $\phantom{\CDe=\max\Big(}\tfrac{\CDeB}{\sigmaB-\norm{\B}_\rho},\
   \tfrac{\CDeNO}{\sigmaNO-\norm{\NO}_\rho},\
   \tfrac{\CDeNOT}{\sigmaNOT-\norm{\NOT}_\rho},\ 
\tfrac{\CDeinvaverS}{\sigmainvaverS -
\Abs{\aver{\S}^{\!-1}}},\
\tfrac{\CDela}{\sigmala- \abs{\la}}\ga\de^\tau,\
\tfrac{\CDeinvla}{\sigmainvla- \abs{\invla}}\ga\de^\tau
\Bigg\}$ & 
\eqref{eq:deffracC}\\[+2ex]
\hline

\end{longtable}
}
\egroup

\newpage

\bgroup
{
\def\arraystretch{1.5}
\begin{longtable}{|l l|}
   \caption{Constants in Theorem \ref{thm:KAM} from Section
   \ref{sec:convergence} } \label{tab:} \\
\hline
Constant & Label \\
\hline
\hline
$\CE=\Max{\CEK a^{2\tau},\CEW\ga\de}$ & 
\eqref{eq:CEj}\\
\hline
$\fracCDeK=\frac{a^{2\tau}}{a^{2\tau}-\ka}\CDeK$ &
\eqref{eq:CDeK}\\
\hline
$\fracCtU=\ga^2\de^{2\tau} + \fracCDeK$ & 
\eqref{eq:fracC}\\
\hline
$\fracCDteK=\frac{a^{2\tau-1}}{a^{2\tau-1} - \ka}d\CDeK$ & 
\eqref{eq:fracC}\\
\hline
$\fracCDvpK=\frac{a^{2\tau-1}}{a^{2\tau-1} - \ka}\ell\CDeK$ & 
\eqref{eq:fracC}\\
\hline
$\fracCDteKT=\frac{a^{2\tau-1}}{a^{2\tau-1} - \ka}2n\CDeK$ & 
\eqref{eq:fracC}\\
\hline
$\fracCDvpKT=\frac{a^{2\tau-1}}{a^{2\tau-1} - \ka}2n\CDeK$ & 
\eqref{eq:fracC}\\
\hline
$\fracCW=\frac{a^\tau}{a^\tau-\ka}\CDeW$ & 
\eqref{eq:fracC}\\
\hline
$\fracCWT=\frac{a^\tau}{a^\tau-\ka}2n\CDeW$ & 
\eqref{eq:fracC}\\
\hline
$\fracCDeB:=\frac{a^\tau}{a^\tau-\ka}\CDeB$ & 
\eqref{eq:fracC}\\
\hline
$\fracCNO=\frac{a^\tau}{a^\tau-\ka}\CDeNO$ & 
\eqref{eq:fracC}\\
\hline
$\fracCNOT=\frac{a^\tau}{a^\tau-\ka}\CDeNOT$ & 
\eqref{eq:fracC}\\
\hline
$\fracCDeinvaverS=\frac{a^\tau}{a^\tau-\ka}\CDeinvaverS$ & 
\eqref{eq:fracC}\\
\hline
$\fracCDela=\Cla\frac{a^{2\tau}}{a^{2\tau}-\ka}\CDela$ & 
\eqref{eq:fracC}\\
\hline
$\fracC=\max\Big\{a^{2\tau}\CE,\ \fracCDe\ga\de^\tau
\Big\}$  &
\eqref{eq:fracC}\\[+0.5ex]
\hline
$\fracCDe=\max\Bigg\{
   {\scriptstyle\CEsym\ga\de^{\tau}},\
   \tfrac{\fracCtU}{R}\ga\de^{\tau},\
\tfrac{\fracCDteK}{\sigmaDteK -
\norm{\DteK}_{\rho}}\ga\de^{\tau-1}, \
  \tfrac{\fracCDvpK}{\sigmaDvpK -
\norm{\DvpK}_{\rho}}\ga\de^{\tau-1},$ & \Tspace{4.5}
\\
$\phantom{\fracCDe=\max\Bigg(}\tfrac{\fracCDteKT}{\sigmaDteKT -
   \norm{(\DteK)^\ttop}_{\rho}}\ga\de^{\tau-1},\ 
   \tfrac{\fracCDvpKT}{\sigmaDvpKT -
   \norm{(\DvpK)^\ttop}_{\rho}}\ga\de^{\tau-1},\ 
   \tfrac{\fracCW}{\sigmaW - \norm{\W}_{\rho}},\ 
   \tfrac{\fracCWT}{\sigmaWT -
   \norm{\W^\ttop}_{\rho}},$ & 
\\
$\phantom{\fracC=\max\Bigg(}\tfrac{\fracCDeB}{\sigmaB -
   \norm{\B}_{\rho}},\
\frac{\fracCNO}{\sigmaNO -
   \norm{\NO}_{\rho}},\
   \tfrac{\fracCNOT}{\sigmaNOT -
   \norm{\NOT}_{\rho}},\
      \tfrac{\fracCDeinvaverS}{\sigmainvaverS -
   \Abs{\aver{\S}^{\!-1}}},\
\frac{\fracCDela}{\sigmala-\abs{\la}}\ga\de^\tau,\
\frac{\fracCDeinvla}{\sigmainvla-\abs{\invla}}\ga\de^\tau,\
\Bigg\}$ & 
\eqref{eq:fracC}\\
\hline
\end{longtable}
}
\egroup

\restoregeometry

\end{document}